\author{Johannes Sj\"ostrand\\
\small IMB, Universit\'e de Bourgogne\\
\small 9, Av.~A.~Savary, BP 47870\\
\small FR-21078 Dijon c\'edex\\
\footnotesize johannes.sjostrand@u-bourgogne.fr\\
\footnotesize  and UMR 5584, CNRS} \date{}
\title{Eigenvalue distribution for non-self-adjoint operators with
  small multiplicative random perturbations}
\newtheorem{dref}{Definition}[section] \newtheorem{lemma}[dref]{Lemma}
\newtheorem{theo}[dref]{Theorem} \newtheorem{prop}[dref]{Proposition}
\newtheorem{remark}[dref]{Remark} \newtheorem{ex}[dref]{Example}
\newtheorem{cor}[dref]{Corollary}
\newenvironment{proof}{\par\noindent{{\bf Proof}}}{\hfill$\Box$
\medskip}
\newcommand{\ekv}[2]{\begin{equation}\label{#1}#2\end{equation}}
\newcommand{\eekv}[3]{\begin{eqnarray}\label{#1}#2 \\ #3
\nonumber\end{eqnarray}}
\newcommand{\eeekv}[4]{\begin{eqnarray}\label{#1}#2 \\ #3
\nonumber\\#4\nonumber\end{eqnarray}}
  \newcommand\iint{\int\hskip -2mm\int}
\newcommand{\no}[1]{(\ref{#1})} \newcommand\trans[1]{{^t\hskip -2pt
#1}}
\begin{document}

\maketitle
\begin{abstract} In this work we continue the study of the
Weyl asymptotics of the distribution of eigenvalues of
non-self-adjoint (pseudo)diff\-erential operators with small random
perturbations, by treating the case of multiplicative perturbations in
arbitrary dimension. We were led to quite essential improvements of
many of the probabilistic aspects.

\medskip
\par \centerline{\bf R\'esum\'e} Dans ce travail nous continuons 
l'\'etude de l'asymp\-totique de Weyl de la
distribution des valeurs propres d'op\'erateurs
(pseudo-)diff\'erentiels avec des perturbations al\'eatoires petites,
en traitant le cas des perturbations multiplicatives en dimension 
quelconque. Nous avons \'et\'e amen\'es \`a faire des 
am\'eliorations essentielles des aspects probabilistes.

\end{abstract}

\tableofcontents

\section{Introduction}\label{int}
\setcounter{equation}{0}
In \cite{Ha} Mildred Hager considered a class of randomly perturbed
semi-classical unbounded (pseudo-)differential operators of the form 
\ekv{int.1}
{
P_\delta =P(x,hD_x;h)+\delta Q_\omega ,\ 0<h\ll 1,
}
on $L^2({\bf R})$, where $P(x,hD_x;h)$ is a non-self-adjoint
pseudodifferential operator of some suitable class (including
differential operators) with leading symbol $p(x,\xi )$ and where
$Q_\omega u(x)=q_\omega (x)u(x)$ is a random multiplicative
perturbation and  $\delta >0$ is a
small parameter. 

\par Let $\Gamma \Subset {\bf C}$ have smooth boundary and
assume that $p^{-1}(z)$ is finite for every $z\in \Gamma $ and also
that $\{ p,\overline{p}\}(\rho ) \ne 0$ for every $\rho \in
p^{-1}(\Gamma )$. Then under some additional assumptions
Hager showed that for $\delta =e^{-\epsilon /h}$,
the number $\# (\sigma (P_\delta )\cap \Gamma )$ of eigenvalues of $P_\delta $ in
$\Gamma $ satisfies
\ekv{int.3}
{
|\# (\sigma (P_\delta )\cap \Gamma )-\frac{1}{2\pi h}\mathrm{vol\,}(p^{-1}(\Gamma ))|
\le \frac{C\sqrt{\epsilon }}{h},
}
with a probability very close to 1 in the limit of small $h$. 

\par Recently, W. Bordeaux-Montrieux \cite{Bo} established almost sure
Weyl asymptotics for the large eigenvalues of elliptic operators and
systems on $S^1$ under assumptions quite similar to those of
Hager. The one-dimensional nature of the problems is essential in the 
proofs in \cite{Ha, Bo}. 

\par In \cite{HaSj}, Hager and the author found a new approach and
extended the results of \cite{Ha} to the case of operators on ${\bf
R}^n$ and replaced the assumption about the non-vanishing of $\{
p,\overline{p}\} $ by a  weaker condition, allowing $\Gamma $ to
contain boundary points of $\overline{p({\bf R}^{2n}})$. In dimension
$\ge 2$, it turned out to be  simpler to consider general random
perturbations of the form
\ekv{int.4}{\delta Q_\omega u=\delta \sum\sum \alpha
  _{j,k}(\omega )(u| f_k) e_j ,}
where $\{ e_j\}$, $\{ f_k\} $ are orthonormal families of
eigenfunctions of certain elliptic $h$-pseudodifferential operators of
Hilbert Schmidt class and $\alpha _{j,k}(\omega )$ are independent
complex Gaussian random variables. With some
exageration, the results of \cite{HaSj} show that most non-self-adjoint
pseudodifferential operators obey Weyl-asymptotics, but since the 
perturbations are no
more multiplicative, we did not have the same conclusion for the 
differential operators.

\par The purpose of the present paper is to treat the case of
multiplicative perturbations in any dimension. 
Several elements of \cite{HaSj} carry over
to the multiplicative case, while the study of a certain effective
Hamiltonian, here a finite random matrix, turned out to be more
difficult. Because of that we were led to abandon the fairly explicit
calculations with Gaussian random variables and instead resort to
arguments from complex analysis. A basic difficulty was then to find at
least one perturbation within the class of permissible ones, for which
we have a lower bound on the determinant of the associated effective
Hamiltonian. This is achieved via an iterative (``renormalization'')
procedure, with estimates on the singular values at each step. An
advantage with the new approach is that we can treat
 more general random perturbations.

We next state the main result of this work. For
simplicity we shall work on ${\bf R}^n$, where some results from
\cite{HaSj} are already available. In principle the extension of our
results to the case of compact manifolds should only present moderate 
technical difficulties. 

\par Let us first specify the assumptions about the unperturbed
operator. 

\par Let $m\ge 1$ be an order function on ${\bf R}^{2n}$
in the sense that 
$$
m(\rho )\le C_0 \langle \rho -\mu \rangle^{N_0}m(\mu ),\ \rho , \mu\in 
{\bf R}^{2n}  
$$
for some fixed positive constants $C_0,N_0$, where we use the standard
notation $\langle \rho \rangle= (1+|\rho |^2)^{1/2}$.

Let 
$$p\in
S(m):=\{ a\in C^\infty ({\bf R}^{2n});\, \vert \partial _\rho ^\alpha
a(\rho )\vert\le C_\alpha m(\rho ),\, \forall \rho \in {\bf
  R}^{2n},\alpha \in {\bf N}^{2n}\} .$$ 

We assume that $p-z$ is elliptic (in the sense that $(p-z)^{-1}\in S(m^{-1})$) for at least one value
$z\in {\bf C}$. Put $\Sigma =\overline{p({\bf R}^{2n})}=p({\bf
  R}^{2n})\cup \Sigma _\infty $, where $\Sigma _\infty $ is the set
of accumulation values of $p(\rho )$ near $\rho = \infty $.  Let 
$P(\rho )=P(\rho ;h)$, $0<h\le h_0$ belong to $S(m)$ in the sense that
$|\partial _\rho ^\alpha P(\rho ;h)| \le C_\alpha m(\rho ) $ as above,
with contants that are independent of $h$. Assume that there exist
$p_1,p_2,...\in S(m)$ such that
$$P\sim p+hp_1+...
\hbox{ in } S(m),\ h\to 0.$$
By $P=P(x,hD_x;h)$ we also denote the Weyl
quantization of $P(x,h\xi ;h)$ (see for instance \cite{DiSj}).
 Let $\Omega \Subset {\bf C}$ be open simply connected
with $\overline{\Omega }\cap \Sigma _\infty =\emptyset$, $\Omega
\not\subset \Sigma $.  
Then for $h>0$ small enough, the spectrum $\sigma (P)$ of $P$ is
discrete in $\Omega $ and constituted of eigenvalues of finite
algebraic multiplicity.
 We will also need the symmetry assumption,
\ekv{int.6}
{
P(x,-\xi ;h)=P(x,\xi ;h).
}

\par Let $V_z(t):=\mathrm{vol\,}(\{ \rho \in {\bf R}^{2n};\,|p(\rho
)-z|^2 \le t\} )$. For $\kappa \in ]0,1]$, $z\in \Omega $, we consider
the property that
\ekv{int.6.2}{V_z(t)={\cal O}(t^ \kappa ),\ 0\le t \ll 1.} 

\par Let $K$ be a compact neighborhood of $\pi _xp^{-1}(\Omega )$,
where $\pi _x$ denotes the na\-tural projection from the cotangent
bundle to the base space. 
The random potential will be of the form 
\ekv{int.6.3}
{q_\omega (x)=\chi _0(x)\sum_{0<\mu _k\le L}\alpha _k(\omega )\epsilon
_k(x),\ |\alpha |_{{\bf C}^D}\le R,}
where $\epsilon _k$ is the orthonormal basis of eigenfunctions of
$h^2\widetilde{R}$, where $\widetilde{R}$ is an $h$-independent
 positive elliptic 2nd order operator with smooth coefficients 
on a compact manifold of
dimension $n$, containing an open set diffeomorphic to an open
neighborhood of $\mathrm{supp\,}\chi _0$. Here $\chi _0\in C_0^\infty
({\bf R}^n)$ is equal to 1 near $K$. $\mu _k^2$ denote the
corresponding eigenvalues, so that $h^2\widetilde{R}\epsilon _k=\mu
_k^2 \epsilon _k$. We choose $L=L(h)$ and $R=R(h)$ in the intervals
\eekv{int.6.4}
{h^{\frac{\kappa -3n}{s-\frac{n}{2}-\epsilon }}\ll L\le C h^{-M},&&
M\ge \frac{3n-\kappa }{s-\frac{n}{2}-\epsilon },}
{\frac{1}{C}h^{-(\frac{n}{2}+\epsilon )M+\kappa -\frac{3n}{2}}\le R\le
 C h^{-\widetilde{M}},&& \widetilde{M}\ge \frac{3n}{2}-\kappa
  +(\frac{n}{2}+\epsilon )M,}
for some $\epsilon \in ]0,s-\frac{n}{2}[$, $s>\frac{n}{2}$,
so by Weyl's law for the large eigenvalues of elliptic
self-adjoint operators, the dimension $D$ is of the order of magnitude
${\cal O}((L/h)^n)$. We introduce the  small parameter\footnote{In the proof of the main result, we get $\delta
  =\tau_0h^{N_1+n}/C$ for some large constant $C$, but a dilation in
  $\tau_0$ can easily be absorbed in the constants later on.} 
$\delta =\tau _0 h^{N_1+n}$, $\tau_0=\tau_0(h)\in ]0,\sqrt{h}]$, 
where 
\ekv{int.6.4.3}
{N_1:=\widetilde{M}+sM+\frac{n}{2}.} 
The randomly perturbed operator is
\ekv{int.6.4.5}
{
P_\delta =P+\delta h^{N_1}q_\omega =:P+\delta Q_\omega .
}
We have chosen the exponent $N_1$ so that $\Vert
h^{N_1}q\Vert_{L^\infty }\le {\cal O}(1) h^{-n/2}\Vert h^{N_1}q
\Vert_{H^s}\le {\cal O}(1)$, when $q$ is an admissible potential as
in (\ref{int.6.3}), (\ref{int.6.4}) and $H^s$ is the semiclassical
Sobolev space in Section \ref{al}. The lower bounds on $L,R$ are
dictated by the construction of a special admissible potential in
Sections \ref{cl}, \ref{sv}.

\par The random variables $\alpha _j(\omega )$ will have a
joint probability distribution \ekv{int.6.5}{P(d\alpha )=C(h)e^{\Phi
(\alpha ;h)}L(d\alpha ),} where for some $N_4>0$,
\ekv{int.6.6}{ |\nabla _\alpha \Phi |_{{\bf C}^D}={\cal
O}(h^{-N_4}),} and $L(d\alpha )$ is the
Lebesgue measure. ($C(h)$ is the normalizing constant, 
assuring that the probability of
$B_{{\bf C}^D}(0,R)$ is equal to 1.) 

\par We also need the parameter 
\ekv{int.6.7.5}{\epsilon _0(h)=(h^{\kappa }+h^n\ln 
\frac{1}{h})(\ln \frac{1}{\tau _0}+(\ln \frac{1}{h})^2)} and assume
that $\tau _0=\tau _0(h)$ is not too small, so that $\epsilon _0(h)$ is
small. The main result of this work is:
\begin{theo}\label{int1} Under the assumptions above, let 
$\Gamma \Subset \Omega $ have smooth boundary, let $\kappa \in
]0,1]$ be the parameter in \no{int.6.3}, \no{int.6.4}, \no{int.6.7.5} and assume that 
\no{int.6.2} holds uniformly for $z$ in a
neighborhood of $\partial \Gamma $. Then there
exists a constant $C>0$ such that 
for $C^{-1}\ge r>0$,
$\widetilde{\epsilon }\ge C \epsilon _0(h)$ 
we have with probability 
\ekv{int.6.8}{
\ge 1-\frac{C\epsilon _0(h)}
{rh^{n+\max (n(M+1),N_4+\widetilde{M})}}
e^{-\frac{\widetilde{\epsilon }}{C\epsilon _0(h)}} }
that:
\eekv{int.7}
{
&&|
\#(\sigma (P_\delta )\cap \Gamma )-\frac{1}{(2\pi h)^n
}\mathrm{vol\,}(p^{-1}(\Gamma ))
|\le
}
{&&
\frac{C}{h^n}\left( \frac{\widetilde{\epsilon }}{r}
+C\left(r+\ln (\frac{1}{r})\mathrm{vol\,}(p^{-1}(\partial
\Gamma +D(0,r)))\right)
 \right).}
Here $\#(\sigma (P_\delta )\cap \Gamma )$ denotes the number of
eigenvalues of $P_\delta $ in $\Gamma $, counted with their algebraic multiplicity.

\end{theo}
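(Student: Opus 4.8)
The plan is to follow the general strategy introduced in \cite{HaSj}: reduce the eigenvalue counting problem for $P_\delta$ to a counting problem for the zeros of a holomorphic function (the determinant of a finite-dimensional effective Hamiltonian), and then apply a quantitative Jensen-type argument. Concretely, first I would set up the Grushin (Schur complement) reduction. Using the ellipticity of $p-z$ on $\overline\Omega$ together with the $h$-pseudodifferential calculus in $S(m)$, one constructs for each $z\in\Omega$ a well-posed Grushin problem for $P-z$ with finite-dimensional auxiliary spaces built from the singular value decomposition of $P-z$ (this is where the hypothesis \no{int.6.2} on the volume $V_z(t)$ enters: it controls the number of small singular values of $P-z$, hence the size of the effective matrix $E_{-+}(z)$). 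Adding the perturbation $\delta Q_\omega$ and taking Schur complements, one gets that $z\in\sigma(P_\delta)\cap\Omega$ iff $E_{-+}^\delta(z,\omega)$ is singular, i.e.\ iff $F(z,\omega):=\det E_{-+}^\delta(z,\omega)=0$, and $F(\cdot,\omega)$ is holomorphic in $\Omega$. Hence $\#(\sigma(P_\delta)\cap\Gamma)$ equals the number of zeros of $F(\cdot,\omega)$ in $\Gamma$.

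Next I would establish the two-sided bounds on $\log|F|$ that feed the counting argument. For the upper bound, crude perturbation estimates on $E_{-+}^\delta$ (using $\|\delta Q_\omega\|\le\mathcal O(\delta)$, $\delta=\tau_0h^{N_1+n}$, and the normalization $\|h^{N_1}q\|_{L^\infty}=\mathcal O(1)$) combined with the deterministic estimate $\log|\det E_{-+}(z)|$ coming from the singular value bounds give $\log|F(z,\omega)|\le \frac{1}{(2\pi h)^n}\int\log|p-z|\,d\rho\,(\text{mod lower order}) + \mathcal O(\epsilon_0(h)/h^n)$ uniformly, for all $\omega$ in the probability space. The real work is the \emph{lower} bound at a suitable fixed reference point $z_0$ (or on a small disc) near $\partial\Gamma$: one needs $\log|F(z_0,\omega)|\ge \frac{1}{(2\pi h)^n}\int\log|p-z_0|\,d\rho - \mathcal O(\widetilde\epsilon/h^n)$ with the high probability \no{int.6.8}. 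This is where the special admissible potential constructed by the renormalization procedure in Sections \ref{cl}, \ref{sv} gets used: for that one potential $q^0$ one has a deterministic lower bound on $|\det E_{-+}^{\delta,q^0}(z_0)|$, and then a Cartan/Remez-type argument in the $\alpha$-variables, using the bound \no{int.6.6} on $\nabla_\alpha\Phi$ and the Lipschitz dependence of $\log|F|$ on $\alpha$, upgrades the single good potential to a large-probability lower bound for random $\omega$. The exponent $N_1$ in \no{int.6.4.3} and the ranges \no{int.6.4} for $L,R$ are precisely calibrated so that this works and so that the various error terms collapse into $\epsilon_0(h)$.

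Having the upper bound everywhere and the lower bound at $z_0$ with high probability, I would invoke the quantitative counting proposition for zeros of holomorphic functions (of the type proved in \cite{HaSj}, essentially a localized Jensen formula / argument-principle estimate): the number of zeros of $F(\cdot,\omega)$ in $\Gamma$ differs from $\frac{1}{2\pi}\int_{\partial\Gamma}d\arg(\text{suitable comparison function})$ by an amount controlled by the defect between the upper bound and the value $\log|F(z_0,\omega)|$, integrated against a harmonic measure / $\bar\partial$ of a cutoff supported in a neighborhood of $\partial\Gamma$ of width $r$. The leading term $\frac{1}{2\pi}\int_{\partial\Gamma}d\arg$ of the comparison function is evaluated by Green's formula / the co-area formula and produces exactly $\frac{1}{(2\pi h)^n}\mathrm{vol}(p^{-1}(\Gamma))$; the error splits into the term $C\widetilde\epsilon/(rh^n)$ (from the $\log|F|$ defect) and the term $\frac{C}{h^n}(r+\ln(1/r)\,\mathrm{vol}(p^{-1}(\partial\Gamma+D(0,r))))$ (from the width-$r$ collar and the variation of the smooth comparison term there, using \no{int.6.2} uniformly near $\partial\Gamma$). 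Assembling these gives \no{int.7}.

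The main obstacle, as the introduction itself signals, is the lower bound on $|\det E_{-+}^\delta|$: one cannot any longer compute with explicit Gaussians as in \cite{HaSj}, so the key technical input is the construction in Sections \ref{cl}--\ref{sv} of \emph{one} admissible multiplicative potential for which the effective determinant is not too small, carried out by an iterative renormalization with singular-value control at each step, followed by the complex-analytic (Cartan-type) argument that converts this single deterministic bound into the probabilistic lower bound with the stated probability \no{int.6.8}. Everything else — the Grushin reduction, the upper bound, and the final Jensen/argument-principle counting — is a relatively routine adaptation of \cite{HaSj}, modulo keeping careful track of the $h$-powers dictated by \no{int.6.4}, \no{int.6.4.3} and \no{int.6.7.5}.
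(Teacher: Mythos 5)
Your outline follows the paper's proof essentially step for step: the Grushin/determinant reduction of Section~\ref{gr}, the deterministic upper bound \no{sp.31}, the construction of one special admissible potential by iterative renormalization in Sections~\ref{cl}--\ref{sv} feeding a one-variable Cartan/Jensen argument in the $\alpha$-variables in Section~\ref{pr}, and the zero-counting proposition from~\cite{HaSj} applied in Section~\ref{en}. Two small corrections to your description: the probabilistic step factors the holomorphic restriction $w\mapsto F(\alpha^0+w\alpha^1)$ on a disc and applies a Jensen-type estimate, rather than using any Lipschitz control of $\log|F|$ in $\alpha$ (which fails near the zero set --- the gradient bound \no{int.6.6} serves only to compare the probability measure $P(d\alpha)$ with Lebesgue measure), and the lower bound on $\log|F|$ must be arranged simultaneously at $\asymp 1/r$ points $z_j$ spread along $\partial\Gamma$, which is precisely what produces the $1/r$ factor in the probability \no{int.6.8}.
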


Actually, we shall prove the theorem for the slightly more general
operators, obtained by replacing $P$ by $P_0$ in (\ref{sv.5.5}).

\par
The second volume in (\ref{int.7}) is ${\cal O}(r^{2\kappa -1})$ which
is of interest when $\kappa >1/2$. In that case
\ekv{int.7.1}
{
\ln \frac{1}{r}\mathrm{vol\,}(p^{-1}(\partial \Gamma +D(0,r))={\cal
  O}(r^\beta ),
}
for any $\beta \in ]0,2\kappa -1[$. Even if $\kappa <1/2$ we can
reasonably assume that (\ref{int.7.1}) holds for some $\beta >0$. (For
instance if $p$ is real-valued and $\Gamma $ does not contain any
critical values of $p$, then (\ref{int.6.2}) holds uniformly for $z$
in a neighborhood of $\partial \Gamma $ with $\kappa =1/2$, but if we
choose $\Gamma $ so that its boundary can only intersect the real axis
transversally, then $\mathrm{vol\,}(p^{-1}(\partial \Gamma
+D(0,r)))={\cal O}(r)$.) Assuming (\ref{int.7.1}) for some $\beta >0$
we choose $r=\widetilde{\epsilon }^{\frac{1}{\beta +1}}$ and the right
hand side of (\ref{int.7}) is $\le Ch^{-n}\widetilde{\epsilon }^{\beta
/(1+\beta )}$, which gives Weyl asymptotics, if $\widetilde{\epsilon
}$ is small.

\par If we assume that 
\ekv{int.7.2}
{
\exp (-h^{-\kappa _0})\le \tau_0\le \sqrt{h},\hbox{ for some }\kappa
_0\in ]0,\kappa [,
}
then 
\ekv{int.7.3}
{
\epsilon _0={\cal O}(h^{\kappa -\kappa _0}\ln \frac{1}{h})
}
is small. Now take $\widetilde{\epsilon }=h^{\widetilde{\kappa }}$,
for some $\widetilde{\kappa }\in ]0,\kappa -\kappa _0[$. Then, we get
the following corollary:
\begin{cor}\label{int1.5}
We make the general assumptions of Theorem \ref{int1}. Assume
(\ref{int.7.1}) for some $\beta >0$ and recall that this is automatically
the case when $\kappa >1/2$ and $0<\beta <2\kappa -1$. Choose
$\delta $ as prior to (\ref{int.6.4.5}) with $\tau_0$ as in 
(\ref{int.7.2}). Let $0<\widetilde{\kappa} <\kappa -\kappa _0$. Then,
with probability
\ekv{int.7.4}
{
\ge 1-\frac{Ch^{\kappa -\kappa _0}\ln
  \frac{1}{h}}{h^{\frac{\widetilde{\kappa }}{1+\beta
    }+n+\max (n(M+1),N_4+\widetilde{M})}}e^{-h^{\widetilde{\kappa
    }-(\kappa -\kappa _0)}/(C\ln \frac{1}{h})},
}
we have 
\ekv{int.7.5}
{
|\# (\sigma (P_\delta )\cap \Gamma )-\frac{1}{(2\pi
  h)^n}\mathrm{vol\,}(p^{-1}(\Gamma ))|\le
\frac{C}{h^n}h^{\frac{\widetilde{\kappa } \beta }{1+\beta }}.
}
\end{cor}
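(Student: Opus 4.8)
The plan is to derive Corollary \ref{int1.5} from Theorem \ref{int1} by inserting the stated choices of $\tau_0$, $\widetilde{\epsilon}$ and $r$, so essentially all the work is a bookkeeping of exponents. First I would pin down the size of $\epsilon_0(h)$. The hypothesis \no{int.7.2} gives $\ln\frac{1}{\tau_0}\le h^{-\kappa_0}$, so from the definition \no{int.6.7.5} we get $\epsilon_0(h)\le (h^{\kappa}+h^n\ln\frac1h)(h^{-\kappa_0}+(\ln\frac1h)^2)$; expanding, this is a sum of the four terms $h^{\kappa-\kappa_0}$, $h^{\kappa}(\ln\frac1h)^2$, $h^{n-\kappa_0}\ln\frac1h$ and $h^n(\ln\frac1h)^3$, and using $n\ge 1\ge\kappa$ together with $\kappa_0>0$ (so that $h^{\kappa_0}(\ln\frac1h)^2\to 0$) each of them is ${\cal O}(h^{\kappa-\kappa_0}\ln\frac1h)$. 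Thus $\epsilon_0(h)={\cal O}(h^{\kappa-\kappa_0}\ln\frac1h)$, which is \no{int.7.3}; in particular $\epsilon_0(h)\to 0$, so the standing smallness assumption on $\epsilon_0$ made just before Theorem \ref{int1} holds for $h$ small.

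Next I would apply Theorem \ref{int1} with $\widetilde{\epsilon}:=h^{\widetilde{\kappa}}$ and $r:=\widetilde{\epsilon}^{1/(1+\beta)}=h^{\widetilde{\kappa}/(1+\beta)}$; here one may first shrink $\beta$, which is harmless since \no{int.7.1} for one exponent implies it for every smaller one, so that we may assume $\beta\le 1$. The two admissibility conditions of the theorem then have to be checked: $0<r\le C^{-1}$ is clear for $h$ small; and $\widetilde{\epsilon}\ge C\epsilon_0(h)$ holds for $h$ small because, by \no{int.7.3}, $\widetilde{\epsilon}/\epsilon_0(h)$ is bounded below by a constant times $h^{\widetilde{\kappa}-(\kappa-\kappa_0)}/\ln\frac1h$, which tends to $+\infty$ precisely because the exponent is negative, i.e.\ because $\widetilde{\kappa}<\kappa-\kappa_0$.

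It then remains to read off the two conclusions of Theorem \ref{int1}. In the right-hand side of \no{int.7} one has $\widetilde{\epsilon}/r=r^{\beta}=h^{\widetilde{\kappa}\beta/(1+\beta)}$; by \no{int.7.1}, $\ln\frac1r\,\mathrm{vol\,}(p^{-1}(\partial\Gamma+D(0,r)))={\cal O}(r^{\beta})$; and $r\le r^{\beta}$ since $\beta\le 1$ and $r<1$. Hence the whole bracket in \no{int.7} is ${\cal O}(r^{\beta})={\cal O}(h^{\widetilde{\kappa}\beta/(1+\beta)})$, which gives \no{int.7.5}. For the probability \no{int.6.8}, substituting $\epsilon_0={\cal O}(h^{\kappa-\kappa_0}\ln\frac1h)$, $r=h^{\widetilde{\kappa}/(1+\beta)}$ and $\widetilde{\epsilon}=h^{\widetilde{\kappa}}$ turns the prefactor into ${\cal O}(h^{\kappa-\kappa_0}\ln\frac1h \,/\, h^{\widetilde{\kappa}/(1+\beta)+n+\max(n(M+1),N_4+\widetilde{M})})$ and the exponent $\widetilde{\epsilon}/(C\epsilon_0)$ into $h^{\widetilde{\kappa}-(\kappa-\kappa_0)}/(C\ln\frac1h)$ after renaming the constant, which is exactly \no{int.7.4}.

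I do not expect a genuine obstacle in this argument: the corollary is pure bookkeeping and all the analytic and probabilistic difficulty is already carried by Theorem \ref{int1}. The only steps that deserve a little attention are the verification that $\epsilon_0(h)$ has the size stated in \no{int.7.3} (which rests on $n\ge\kappa$ and $\kappa_0<\kappa$) and the verification of the admissibility inequality $\widetilde{\epsilon}\ge C\epsilon_0(h)$, which is exactly the condition forcing the constraint $0<\widetilde{\kappa}<\kappa-\kappa_0$ in the statement.
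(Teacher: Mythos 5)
Your argument is correct and matches the paper's (implicit) proof exactly: the paper derives the corollary in the two paragraphs preceding its statement by plugging $\tau_0$ as in (\ref{int.7.2}), $\widetilde\epsilon=h^{\widetilde\kappa}$ and $r=\widetilde\epsilon^{1/(1+\beta)}$ into Theorem \ref{int1}, using $\epsilon_0={\cal O}(h^{\kappa-\kappa_0}\ln\frac1h)$, which is precisely your bookkeeping. Your remark that one should arrange $\beta\le1$ so that $r\le r^\beta$ is a small clarification that the paper passes over silently; note, though, that replacing $\beta$ by a smaller value weakens (\ref{int.7.5}), so strictly speaking this resolves the issue only if one reads the corollary as holding for all sufficiently small $\beta$ satisfying (\ref{int.7.1}) (which is automatic in the case $\kappa>1/2$ since then $\beta<2\kappa-1\le1$).
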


As in \cite{HaSj} we also have a result valid simultaneously for a
family ${\cal C}$ of domains $\Gamma \subset \Omega $ satisfying the
assumptions of Theorem \ref{int1} uniformly in the natural sense:
With a probability 
\ekv{int.8}{
\ge 1-\frac{{\cal O}(1)\epsilon _0(h)}{r^2h^{n+\max (n(M+1),N_4+\widetilde{M})}}e^{-\frac{\widetilde{\epsilon }}{C\epsilon _0(h)}}, } the
estimate \no{int.7} holds simultaneously for all $\Gamma \in {\cal
  C}$. The corresponding variant of Corollary \ref{int1.5} holds also;
just replace $\frac{\widetilde{\kappa }}{1+\beta }$ in the exponent of
the denominator in (\ref{int.7.4}) by 
$\frac{2\widetilde{\kappa }}{1+\beta }$.

\begin{remark}\label{int2}
{\rm When $\widetilde{R}$ has real coefficients, we may assume that the
eigenfunctions $\epsilon _j$ are real. Then (cf Remark \ref{pr3}) we may
restrict $\alpha $ in (\ref{int.6.3}) to be in ${\bf R}^D$ so that
$q_\omega $ is real, still with $|\alpha |\le R$, and change
$C(h)$ in (\ref{int.6.5}) so that $P$ becomes a probability measure on 
$B_{{\bf R}^D}(0,R)$. Then Theorem \ref{int1} remains valid. This
might be of interest in resonance counting problems, where
self-adjointness of the operator should be preserved in the interior
region where no complex scaling is performed.}
\end{remark}
\begin{remark}\label{int3}
{\rm We believe that the main result of this paper can also be proved in
the case when ${\bf R}^n$ is replaced by a compact manifold. Taking
this for granted, we see that the assumption (\ref{int.6}) cannot be
completely eliminated. Indeed, let $P=hD_x+g(x)$ on ${\bf T}={\bf
  R}/(2\pi {\bf Z})$ where $g$ is smooth and complex valued. Then (cf
Hager \cite{Ha2}) the spectrum of $P$ is contained in the line 
$\Im z = \int_0^{2\pi }\Im g(x)dx/(2\pi )$. This line will vary only very
little under small multiplicative perturbations of $P$ so 
Theorem \ref{int1} cannot hold in this case.}
\end{remark}
\par When $z\in \Sigma \setminus \Sigma _\infty $ and $(\Re z,\Im z)$
is not a critical value of the map $(x,\xi )\to (\Re p,\Im p)$, then
(\ref{int.6.2}) holds with $\kappa =1$. Since the critical values form
a set of Lebesgue measure zero by Sard's theorem, this is what we
expect for most $z$. However such points are necessarily interior points
of $\Sigma $ (by the implicit function theorem) and it is particularly
important to study the distribution of eigenvalues near the
boundary. When $z\in \partial \Sigma \setminus \Sigma _\infty $, and
$\{ p,\{ p,\overline{p}\}\}\ne 0$ at every point of $p^{-1}(z)$, then we
saw in \cite{HaSj}, Example 12.1, that (\ref{int.6.2}) holds with
$\kappa =\frac{3}{4}$.   
\begin{ex}\label{int4}
{\rm Let $1\le m_0(x)$ be an order function on ${\bf R}^n$, let $V\in
S(m_0)$ be a smooth potential which is elliptic in the sense that
$|V(x)|\ge m_0(x)/C$ and assume that $-\pi +\epsilon _0\le
\mathrm{arg\,}(V(x))\le \pi -\epsilon _0$ for some fixed $\epsilon
_0>0$. Then it is easy to see that $p(x,\xi ):=\xi ^2+V(x)$ is an
elliptic element of $S(m)$, where $m(x,\xi )$ is the order function
$m_0(x)+\xi ^2$. Let $\Sigma _{\infty }(V)$ be the set of accumulation
points of $V(x)$ at infinity and define 
$\Sigma (V)=\overline{V({\bf R}^n)}=V({\bf R}^n)\cup \Sigma _\infty
(V)$. Then with $\Sigma $ and $\Sigma _\infty $ defined for $p$ as
above, we get $\Sigma =\Sigma (V)+[0,+\infty [$, $\Sigma_\infty 
=\Sigma_\infty  (V)+[0,+\infty [$. Using the fact that $\partial _{\xi
_1}^2\Re p\ge 1/C$, we further see that if 
$\widetilde{K}\subset {\bf C}$ is compact
and disjoint from $\Sigma _\infty $, then (\ref{int.6.2}) holds
uniformly for $z\in \widetilde{K}$ with $\kappa =1/4$. 
The non-self-adjoint Schr\"odinger
operator $P:=-h^2\Delta +V(x)$ has $P(x,\xi )=p(x,\xi )$ as its symbol
and (\ref{int.6}) is fulfilled. This means that Theorem \ref{int1} is
applicable, but to have an interesting conclusion, 
we have to look for domains $\Gamma $ for which
(\ref{int.7.1}) holds for some $\beta >0$.}
\end{ex}

The conditions on the random perturbations are clearly not the most
general ones attainable with the methods of this paper and further
generalizations may come naturally when looking at new problems. It
should be possible to consider infinite sums in (\ref{int.6.3}) and
drop the upper bound on the size of $\alpha $, provided that we add assumptions on the
probability in (\ref{int.6.5}), (\ref{int.6.6}). Here, we just give an
example where the upper bound $|\alpha |_{{\bf C}^D}\le R$ can be
removed:
Consider
\ekv{int.9}
{
q_\omega (x)=\chi _0(x)\sum_{1}^D \alpha _k(\omega )\epsilon _k(x),
}
as in (\ref{int.6.3}). We now assume that $\alpha _k(\omega )$
are independent Gaussian ${\cal N}(0,\sigma _k^2)$-laws, i.e. with
probability distribution
\ekv{int.10}
{
\frac{1}{\pi \sigma _k^2}e^{-\frac{|\alpha _k|^2}{\sigma
    _k^2}}L(d \alpha _k).
}  
Then $P(d\alpha )$ is of the form (\ref{int.6.5}) (now normalized on
${\bf C}^D$ rather than on the ball $B_{{\bf C}^D}(0,R)$) with
$$
\Phi (\alpha ;h)=-\sum_{k=1}^D \frac{|\alpha _k|^2}{\sigma _k^2}.
$$
On $B_{{\bf C}^D}(0,R)$, we have 
$$
\Vert \nabla \Phi \Vert ={\cal O}(1) \frac{R}{\min \sigma _k^2},
$$
so (\ref{int.6.6}) holds for some $N_4$, provided that $R$ is bounded
by some negative power of $h$ as in (\ref{int.6.3}) and
\ekv{int.11}{\min \sigma _k \hbox{ is bounded from below by some power
  of }h.}

\par As we saw in \cite{HaSj} and further improved and simplified by
Bordeaux Montrieux \cite{Bo}, the probability that $|\alpha |_{{\bf
    C}^D}\ge R$ is 
$$
\le \exp \left( \frac{C_0}{2\min \sigma _j^2}\sum \sigma _j^2 -\frac{R^2}{2\min \sigma _j^2} \right),
$$ 
so
\ekv{int.12}
{
P(|\alpha |_{{\bf C}^D}\ge R)\le e^{-h^{-\widehat{\kappa }}},
}
for $h$ small enough, where $\widehat{\kappa }$ is any given fixed
positive number, provided that $\max \sigma _j$ is bounded from
above by some power of $h$ and we choose $R\asymp h^{-\widetilde{M}}$
for $\widetilde{M}$ large enough. Hence Theorem \ref{int1} is applicable. 

 The remainder of this paper is devoted to the proof of Theorem
 \ref{int1}. Much of the proof follows the strategy of \cite{HaSj} but
 there are also some essential differences, since we had to abandon
 the fairly explicit random matrix considerations there. As in
 \cite{HaSj} we identify the eigenvalues with the zeros of a
 holomorphic function, here $F_\delta (z;h)=\det (P_{\delta ,z})$,
 where $P_{\delta ,z}=(\widetilde{P}_\delta -z)^{-1}(P_\delta
 -z)=1+(\widetilde{P}_\delta -z)^{-1}(P-\widetilde{P})$, 
 $\widetilde{P}_\delta =P_\delta +\widetilde{P}-P$ and $\widetilde{P}$
 is a new pseudodifferential operator, whose symbol coincides with
 the one of $P$ outside a compact set and such that $\widetilde{P}-z$
 is elliptic for all $z\in \Omega $. In {\it Sections \ref{al}, \ref{hs} } we prepare
 this approach by showing that $\delta Q_\omega $ is bounded and has
 small norm: $H^\sigma \to H^\sigma $ for $-s\le \sigma \le s$, where
 $H^\sigma $ is the standard Sobolev space equipped with a natural
 semi-classical $h$-dependent norm). We also need to understand some
 localization and boundedness properties of the resolvent and the
 spectral projections corresponding to small eigenvalues of the
 self-adjoint operators $S_{\delta ,z}=P_{\delta ,z}^*P_{\delta ,z}$
 and $S_\delta =(P_\delta -z)^*(P_\delta -z)$. 

\par In {\it Section \ref{gr}}, we apply results from \cite{HaSj} to estimate
the number of small eigenvalues of $S_{\delta ,z}$ and $S_{\delta
}$. Using this, we set up an auxiliary invertible ``Grushin'' matrix
$$
{\cal P}_\delta =\left(\begin{array}{ccc}P_{\delta ,z} &R_-\\ R_+
    &0 \end{array}\right): L^2({\bf R}^n)\times {\bf C}^N\to 
L^2({\bf R}^n)\times {\bf C}^N,
$$ 
where $N={\cal O}(\alpha ^\kappa h^{-n})$ is the number of eigenvalues
of $S_{\delta ,z}$ that are $\le \alpha $ where $\alpha =Ch$ for some
large constant $C$, and we establish (\ref{grny.9}) saying roughly that 
$$
\ln |\det {\cal P}_\delta |\approx \frac{1}{(2\pi h)^n}\iint \ln
|p_z(x,\xi )|dxd\xi ,\quad p_z=\frac{p-z}{\widetilde{p}-z},
$$
where $\widetilde{p}$ denotes the leading symbol of $\widetilde{P}$.
If $E_{-+}^\delta $ denotes the lower right entry in the block matrix 
of ${\cal P}_\delta^{-1}$ then $\det P_\delta =\det {\cal P}_\delta
+\det E_{-+}^\delta $ as we showed in \cite{HaSj} using some
calculation from \cite{SjZw}. Using that the size $N$ of
$E_{-+}^\delta $ is $\ll h^{-n}$, we get a nice upper bound on 
$\ln|\det E^\delta _{-+}|$ and it follows that for $z$ in a neighborhood of
$\partial \Gamma $,
\begin{equation}\label{int.13}
\ln |F_\delta |\le \frac{1}{(2\pi h)^n}\iint \ln |p_z(x,\xi )|dxd\xi
+\hbox{"small"}.
\end{equation}
See (\ref{sp.31}) for a more precise statement.

The crucial step (as in \cite{Ha,HaSj}) is to get a corresponding
lower bound with probability close to 1 for each $z$, and this amounts
to getting a corresponding lower bound for $\ln |\det E_{-+}^\delta
|$. In \cite{HaSj} we did so by showing that $E_{-+}^\delta $ (there)
was quite close to a random matrix with independent Gaussian
entries. In the case of multiplicative perturbations, such an explicit
approach seems out of reach even if we assume the $\alpha _j$ to be
independent Gaussian random variables. Instead we choose a different
approach based on complex analysis and Jensen's formula in the $\alpha
$-variables. The main step in this new approach is then to construct
one admissible potential as in (\ref{int.6.3}), (\ref{int.6.4}) (ie to find one
special value of $\alpha \in B_{{\bf C}^D}(0,R)$), for which $|\det
E_{-+}^\delta |$ is not too small). When trying to do so, one is led to
consider the singular values of $E_{-+}^\delta $ or
equivalently 
(as we
shall see) the small singular values of $P_{\delta}-z$.

\par In {\it Section \ref{inv}} this is carried out for a model matrix
that would correspond to a leading term in the perturbative expansion of
$E_{-+}^\delta $, however with  $q_\omega $ replaced by a a sum of $N$
delta functions. Then in {\it Section \ref{cl} } we approximate such
$\delta $-functions with admissible potentials and get 
corresponding estimates for a true leading term in the expansion of
$E_{-+}^\delta $. Due to the approximation we only get good lower
bounds for the first roughly $N/2$ singular values.

\par In {\it Section \ref{sv}} we make an iterative procedure. Let
$0<\theta <1/4$ be fixed. and consider the first $\theta N$ 
values of $E_{-+}$ appearing in the inverse of the Grushin matrix for
the unperturbed problem. (For simplicity we here treat $\theta N$ and
similar numbers as if they were integers.) If they are all 
conveniently large, we add no further perturbation in this step, or more precisely we choose the
zero potential as the admissible perturbation. If not, we consider the
perturbation $P_\delta $ given by the special admissible potential $q$
constructed in the preceding section. Then with appropriate choices of
the parameters, we get the desired lower bound on the first $\theta N$
singular values of the matrix $E_{-+}^\delta $, corresponding to this
perturbation. In both cases we get a perturbed operator $P_\delta $ 
(which may or
may not be equal to $P$) and we next consider the natural Grushin
problem for $P_\delta $ now with $N$ replaced by $(1-\theta )N$. For
the new $E_{-+}$ of size $(1-\theta )N$ we again consider the first
$\theta (1-\theta )N $ singular values. If they
are all larger than a new bound, obtained from the preceding one by
multiplication by a
suitable power of $h$, then the next
perturbation is zero, if not, use again the result of the preceding
section to find a convenient perturbation and so on. In the end we get
the desired admissible perturbation as a geometrically convergent sum of
perturbations, and for this perturbation we get 
\begin{equation}\label{int.14}
\ln |F_\delta |\ge (\frac{1}{2\pi h})^n\iint \ln |p_z (x,\xi )|dxd\xi
-\hbox{ "small"}.
\end{equation}

\par In {\it Section \ref{pr}, } the spectral parameter is still
fixed, and we perform a complex analysis argument in
the $\alpha $-variables to show that if we have (\ref{int.14}) for one
value of $\alpha $ then it holds with probability close to 1. In {\it
  Section \ref{en}} it then only remains to let $z $ become
variable and to apply a result of \cite{HaSj} (extending one of
\cite{Ha}) about counting zeros of holomorphic functions with
exponential growth. Very roughly, this result says that if
$u(z)=u(z,\widetilde{h})$ is holomorphic in a fixed neighborhood of
$\overline{\Gamma }$ such that $|u(z;\widetilde{h})|\le e^{\phi
  (z)/\widetilde{h}}$ for all $z$ in a neighborhood of $\partial
\Gamma $ and satisfying the lower bound $|u(z_j;\widetilde{h})|\ge
e^{(\phi (z_j)-\mathrm{small})/h}$ at finitely many points $z_j$, nicely
spread along the boundary of $\Gamma $, then the number of zeros of
$u$ in $\Gamma $ is approximately equal to $(2\pi
\widetilde{h})^{-1}\iint_\Gamma \Delta \phi (z)L(dz)$. Here, as
in \cite{Ha, HaSj} we take $\widetilde{h}=(2\pi h)^n$, $\phi (z)$
equal to the integral in (\ref{int.13}), (\ref{int.14}) and use the
fact that $\frac{1}{2\pi }$ times the Laplacian of this function can
be identified with the push forward under $p$ of the symplectic volume
element.

In {\it Section \ref{app}, } we review some $h$-pseudodifferential and
functional calculus.

\par\noindent {\bf Acknowledgement} The referee's many pertinent
remarks have helped us to improve the presentation of the paper.

\section{Semiclassical Sobolev spaces and multiplication}
\label{al}
\setcounter{equation}{0}
We let $H^s({\bf R}^n)\subset {\cal S}'({\bf R}^n)$, $s\in {\bf R}$, 
denote the semiclassical Sobolev space of order
$s$ equipped with the norm $\Vert \langle hD\rangle^s u\Vert$ where
the norms are the ones in $L^2$, $\ell^2$ or the corresponding
operator norms if nothing else
is indicated. Here $\langle hD\rangle= (1+(hD)^2)^{1/2}$. Let
$\widehat{u}(\xi )=\int e^{-ix\cdot \xi }u(x)dx$ denote the Fourier
transform of the tempered distribution $u$ on ${\bf R}^n$.
\begin{prop}\label{al1}
Let $s>n/2$. Then there exists a constant $C=C(s)$ such that for all
$u,v\in H^s({\bf R}^n)$, we have $u\in L^\infty ({\bf R}^n) $, 
$uv\in H^s({\bf R}^n)$ and 
\ekv{al.1}
{
\Vert u\Vert_{L^\infty }\le Ch^{-n/2}\Vert u\Vert_{H^s},
}
\ekv{al.2}
{
\Vert uv\Vert_{H^s} \le Ch^{-n/2} \Vert u\Vert_{H^s} \Vert v\Vert_{H^s}.
}
\end{prop}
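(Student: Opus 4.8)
The plan is to prove Proposition \ref{al1} by reducing the multiplicative estimate \no{al.2} to the $L^\infty$ estimate \no{al.1} via a dyadic (Littlewood–Paley) decomposition in frequency, keeping careful track of the semiclassical parameter $h$.

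First I would establish \no{al.1}. By the Fourier inversion formula, $\Vert u\Vert_{L^\infty}\le (2\pi)^{-n}\Vert\widehat u\Vert_{L^1}$, and then Cauchy–Schwarz against the weight $\langle h\xi\rangle^{-s}$ gives $\Vert\widehat u\Vert_{L^1}\le \Vert\langle h\xi\rangle^{-s}\Vert_{L^2}\,\Vert\langle h\xi\rangle^{s}\widehat u\Vert_{L^2}$. The first factor is, after the substitution $\eta=h\xi$, equal to $h^{-n/2}$ times the fixed finite constant $\Vert\langle\eta\rangle^{-s}\Vert_{L^2}$, which is finite precisely because $s>n/2$; the second factor is $\Vert u\Vert_{H^s}$ up to the usual normalization of the Fourier transform. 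This yields \no{al.1}.

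For \no{al.2}, I would write $\Vert uv\Vert_{H^s}=\Vert\langle h\xi\rangle^s\widehat{uv}\Vert_{L^2}$ with $\widehat{uv}=(2\pi)^{-n}\widehat u*\widehat v$, and use the standard algebra-property argument: the Peetre-type inequality $\langle h\xi\rangle^s\le C_s(\langle h\eta\rangle^s+\langle h(\xi-\eta)\rangle^s)$ splits the convolution into two symmetric pieces. For the piece where $\langle h\xi\rangle^s$ is dominated by $\langle h\eta\rangle^s$, I bound $\langle h\xi\rangle^s|\widehat{uv}(\xi)|\lesssim (|\langle h\eta\rangle^s\widehat u|*|\widehat v|)(\xi)$ and apply Young's inequality $\Vert f*g\Vert_{L^2}\le\Vert f\Vert_{L^2}\Vert g\Vert_{L^1}$, so that this contribution is $\lesssim\Vert u\Vert_{H^s}\Vert\widehat v\Vert_{L^1}$; then, exactly as in the proof of \no{al.1}, $\Vert\widehat v\Vert_{L^1}\le Ch^{-n/2}\Vert v\Vert_{H^s}$. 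The symmetric piece is handled the same way with the roles of $u$ and $v$ exchanged, and adding the two gives \no{al.2}. The containments $u\in L^\infty$ and $uv\in H^s$ follow from finiteness of the respective norms.

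The only point requiring genuine care — and the place where the semiclassical bookkeeping could go wrong — is tracking the power of $h$: the weight in the $H^s$ norm is $\langle h\xi\rangle^s$, not $\langle\xi\rangle^s$, so every application of Cauchy–Schwarz or Young's inequality that produces an $L^1$-in-frequency factor generates exactly one factor $h^{-n/2}$ (and no more), from the rescaling $\eta\mapsto h\eta$ in $\Vert\langle h\eta\rangle^{-s}\Vert_{L^2}$. As long as one resists the temptation to estimate $\langle h\eta\rangle$ by $\langle\eta\rangle$ and instead carries the $h$ through each substitution, both constants come out independent of $h$ as claimed, and the sharp exponent $-n/2$ (rather than something larger) is obtained.
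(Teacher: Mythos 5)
Your proof is correct and follows essentially the same route as the paper: the $L^\infty$ bound is obtained identically from Fourier inversion plus Cauchy--Schwarz against $\langle h\xi\rangle^{-s}$, and the product bound in both cases splits the frequency convolution into two symmetric pieces according to which of the two incoming frequencies dominates, then applies a Young/Schur-type estimate in which the single factor $h^{-n/2}$ arises from $\Vert\langle h\cdot\rangle^{-s}\Vert_{L^2}$. The paper phrases the second step as a trilinear bound tested against an auxiliary $w\in L^2$ and partitions the integration region directly instead of invoking a Peetre inequality, but this is only a cosmetic difference.
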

\begin{proof}
The fact that $u\in L^\infty $ and the estimate \no{al.1} follow from
Fourier's inversion formula and the Cauchy-Schwartz inequality:
$$
|u(x)|\le \frac{1}{(2\pi )^n}\int \langle h\xi \rangle^{-s}
(\langle h\xi \rangle^{s}\vert \widehat{u}(\xi )\vert ) d\xi \le 
\frac{1}{(2\pi )^{n/2}}\Vert \langle h\cdot \rangle^{-s}\Vert
\Vert u\Vert_{H^s}.
$$
It then suffices to use that $\Vert \langle h\cdot
\rangle^{-s}\Vert =C(s)h^{-n/2}$. 

\par In order to prove \no{al.2} we pass to the Fourier transform
side, and we see that it suffices to show that
\ekv{al.3}
{
 \int \langle h\xi \rangle^s w(\xi )(\langle h\cdot \rangle^{-s}\widetilde{u}*
\langle h\cdot \rangle^{-s}\widetilde{v})(\xi )d\xi \le
C(s)h^{-\frac{n}{2}}\Vert \widetilde{u}\Vert \Vert \widetilde{v}\Vert \Vert w\Vert,
}
for all non-negative $\widetilde{u},\widetilde{v},w\in L^2$, where $*$ denotes convolution. 
Here the left hand side can be written
$$
\iint_{\eta +\zeta =\xi } \frac{\langle h\xi \rangle^s}{\langle h\eta \rangle^s
\langle h\zeta 
\rangle^s}w(\xi )\widetilde{u}(\eta )\widetilde{v}(\zeta )d\xi d\zeta \le \mathrm{I}+\mathrm{II},
$$
where ${\mathrm I}$, $\mathrm{II}$ denote the corresponding integrals
over the sets $\{ \vert \eta \vert \ge \vert \xi \vert /2\}$ and 
$\{ \vert \zeta \vert \ge \vert \xi \vert /2\}$ respectively. Here
\begin{eqnarray*}
\mathrm{I}&\le& C(s)\int (\int w(\xi )\widetilde{u}(\xi -\zeta )d\xi )\frac{\widetilde{v}(\zeta
  )}{\langle h\zeta \rangle^s}d\zeta \\
&\le& C(s) \Vert w\Vert \Vert \widetilde{u}\Vert \Vert 
\frac{\widetilde{v}}{\langle h\cdot \rangle^s}\Vert_{L^1}.
\end{eqnarray*}
As in the proof of \no{al.1} we see that $\Vert \frac{\widetilde{v}}{\langle
  h\cdot \rangle^s}\Vert_{L^1}\le C(s)h^{-\frac{n}{2}}\Vert
\widetilde{v}\Vert$, so $\mathrm{I}$ is bounded by a constant times $h^{-\frac{n}{2}}
\Vert \widetilde{u}\Vert \Vert \widetilde{v}\Vert \Vert w\Vert$. The same estimate holds for
$\mathrm{II}$ and \no{al.3} follows.
\end{proof}

Let $\widetilde{\Omega}$ be a compact $n$-dimensional manifold. We
cover $\widetilde{\Omega}$ by finitely many coordinate neighborhoods
$M_1,...,M_p$ and for each $M_j$, we let $x_1,...,x_n$ denote the
corresponding local coordinates on $M_j$. Let $0\le \chi _j\in
C_0^\infty (M_j)$ have the property that $\sum_1^p\chi _j >0$ on
$\widetilde{\Omega}$. Define $H^s(\widetilde{\Omega})$ to be the space
of all $u\in {\cal D}'(\widetilde{\Omega})$ such that \ekv{al.4} {
  \Vert u\Vert_{H^s}^2:=\sum_1^p \Vert \chi _j\langle hD\rangle^s \chi
  _j u\Vert ^2 <\infty .  } It is standard to show that this
definition does not depend on the choice of the coordinate
neighborhoods or on $\chi _j$. With different choices of these
quantities we get norms in \no{al.4} which are uniformly equivalent
when $h\to 0$. In fact, this follows from the $h$-pseudodifferential
calculus on manifolds with symbols in the H\"ormander space
$S^m_{1,0}$. (This calculus has been used in several papers like
\cite{IaSjZw, SjZw, WuZw} and for completeness we discuss it in the
appendix, Section \ref{app}.)

An equivalent definition of $H^s(\widetilde{\Omega})$ is the following: Let 
\ekv{al.5}
{
h^2\widetilde{R}=\sum (hD_{x_j})^*r_{j,k}(x)hD_{x_k}
}
be a non-negative elliptic operator with smooth coefficients on $\widetilde{\Omega}$,
where the star indicates that we take the adjoint with respect to
some fixed positive smooth density on $\widetilde{\Omega}$. Then $h^2\widetilde{R}$ is 
essentially
self-adjoint with domain $H^2(\widetilde{\Omega})$, so $(1+h^2\widetilde{R})^{s/2}:L^2\to L^2$ is a
well-defined closed densely defined operator for $s\in {\bf R}$, which
is bounded precisely when $s\le 0$. Standard methods allow to show
that $(1+h^2\widetilde{R})^{s/2}$ is an $h$-pseudodifferential operator with
symbol in $S^s_{1,0}$ and semiclassical principal symbol given by
$(1+r(x,\xi ))^{s/2}$, where $r(x,\xi )=\sum_{j,k}r_{j,k}(x)\xi _j\xi
_k$ is the semiclassical principal symbol of $h^2\widetilde{R}$. 
See Section \ref{app}.
The
$h$-pseudodifferential calculus gives for every $s\in {\bf R}$:
\begin{prop}\label{al2}
  $H^s(\widetilde{\Omega})$ is the space of all $u\in {\cal D}'(\widetilde{\Omega})$ such that 
$(1+h^2\widetilde{R})^{s/2}u\in L^2$ and the norm $\Vert u\Vert_{H^s}$ is
equivalent to $\Vert (1+h^2\widetilde{R})^{s/2}u\Vert$, uniformly when $h\to 0$.
\end{prop}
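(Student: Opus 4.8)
The plan is to identify, via the $h$-pseudodifferential calculus of Section \ref{app}, both norms with the $L^2$-boundedness of appropriate operators of order $0$. Write $A_\sigma:=(1+h^2\widetilde R)^{\sigma/2}$, $\sigma\in{\bf R}$. By the discussion preceding the proposition, $A_s$ is an $h$-$\Psi$DO of order $s$ with symbol in $S^s_{1,0}$, elliptic of order $s$ because $h^2\widetilde R$ is elliptic of order $2$, and its inverse (in the sense of the spectral theorem) is $A_{-s}$, again an $h$-$\Psi$DO, of order $-s$. Likewise, for each $j$, the operator $\chi_j\langle hD\rangle^s\chi_j$, extended by $0$ off $M_j$, is an $h$-$\Psi$DO on $\widetilde\Omega$ of order $s$, with $x$-support in $\mathrm{supp\,}\chi_j$ and semiclassical principal symbol $\chi_j(x)^2\langle\xi\rangle^s$, and similarly with $s$ replaced by $-s$. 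Since all symbols occurring below are $h$-independent, all constants produced will be uniform as $h\to 0$. One inequality is then immediate: the composition $\chi_j\langle hD\rangle^s\chi_j\circ A_{-s}$ is an $h$-$\Psi$DO of order $0$, hence bounded $L^2\to L^2$ with norm ${\cal O}(1)$ by the $L^2$-continuity of the calculus, so if $A_s u\in L^2$ then $\chi_j\langle hD\rangle^s\chi_j u=(\chi_j\langle hD\rangle^s\chi_j A_{-s})(A_s u)\in L^2$ with norm $\le C\Vert A_s u\Vert$; summing over $j=1,\dots ,p$ shows $u\in H^s(\widetilde\Omega)$ in the sense of \no{al.4}, with $\Vert u\Vert_{H^s}\le C\Vert A_s u\Vert$.

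For the converse, which is the only step that is not routine symbol bookkeeping, I would build a parametrix that ``inverts'' the partition of unity. Since $\chi_j\ge 0$ and $\sum_1^p\chi_j>0$ on $\widetilde\Omega$, one also has $\sum_1^p\chi_j^4>0$, so $\widetilde g:=(\sum_1^p\chi_j^4)^{-1}$ is smooth on $\widetilde\Omega$; put $F_j:=\widetilde g\,\chi_j\langle hD\rangle^{-s}\chi_j$, an $h$-$\Psi$DO of order $-s$. By the composition rule, $\sum_{j=1}^p F_j\circ(\chi_j\langle hD\rangle^s\chi_j)$ has semiclassical principal symbol $\sum_j\widetilde g\,\chi_j^4=1$, hence equals $1+R$ with $\Vert R\Vert_{L^2\to L^2}={\cal O}(h)$; thus $1+R$ is invertible on $L^2$ for $h$ small, and, being an invertible elliptic $h$-$\Psi$DO of order $0$, its inverse is again an $h$-$\Psi$DO of order $0$. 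Consequently $A_s(1+R)^{-1}F_j$ is an $h$-$\Psi$DO of order $s+0-s=0$, hence bounded $L^2\to L^2$ uniformly in $h$; for $u\in H^s(\widetilde\Omega)$ in the sense of \no{al.4}, the identity $A_s u=\sum_{j=1}^p A_s(1+R)^{-1}F_j\,(\chi_j\langle hD\rangle^s\chi_j u)$ then shows $A_s u\in L^2$ with $\Vert A_s u\Vert\le C\sum_j\Vert\chi_j\langle hD\rangle^s\chi_j u\Vert\le C'\Vert u\Vert_{H^s}$.

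The two inequalities together give the proposition, and as a byproduct reprove that the norm in \no{al.4} is independent of the auxiliary choices, since the $A_s$-norm refers to none of them. I expect the main obstacle to be precisely the parametrix step: producing $\sum_j F_j\circ(\chi_j\langle hD\rangle^s\chi_j)=1+R$ with a small remainder, and invoking the standard fact that an invertible elliptic $h$-$\Psi$DO of order $0$ has an inverse of the same kind. If one prefers, one can iterate the parametrix construction until $R$ is smoothing and ${\cal O}(h^\infty)$, which makes the invertibility of $1+R$ and the $\Psi$DO character of its inverse evident; in any case all the needed facts are part of the $h$-pseudodifferential calculus reviewed in Section \ref{app}.
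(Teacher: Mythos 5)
Your argument is correct and is precisely the route the paper has in mind: the text before the proposition reduces everything to the fact (proved as Proposition~\ref{app1} in the appendix) that $(1+h^2\widetilde R)^{s/2}$ is an elliptic $h$-pseudodifferential operator of order $s$ with principal symbol $(1+r(x,\xi))^{s/2}$, and then invokes ``the $h$-pseudodifferential calculus'' for the norm equivalence without spelling out the details. Your two steps — composing $\chi_j\langle hD\rangle^s\chi_j$ with $A_{-s}$ to get an order-$0$ operator for one inequality, and the partition-of-unity parametrix $\sum_j F_j\circ(\chi_j\langle hD\rangle^s\chi_j)=1+{\cal O}(h)$ followed by elliptic inversion for the other — are exactly the bookkeeping being delegated, and they are carried out correctly (the only cosmetic point: the full symbols are $h$-dependent through their asymptotic expansions, but the $S^s_{1,0}$ estimates are uniform, which is what gives $h$-uniform constants).
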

\begin{remark}\label{al3}
\rm From the first definition we see that Proposition \ref{al1} remains
valid if we replace ${\bf R}^n$ by a compact $n$-dimensional manifold $\widetilde{\Omega}$.
\end{remark}
\section{$H^s$-perturbations and eigenfunctions}\label{hs}
\setcounter{equation}{0}

\par Let $m\ge 1$ be an order function on ${\bf R}^{2n}$
in the sense that 
$$
m(\rho )\le C_0 \langle \rho -\mu \rangle^{N_0}m(\mu ),\ \rho , \mu\in 
{\bf R}^{2n}  
$$
for some fixed positive constants $C_0,N_0$, 
 and let 
$$p\in
S(m):=\{ a\in C^\infty ({\bf R}^{2n});\, \vert \partial _\rho ^\alpha
a(\rho )\vert\le C_\alpha m(\rho ),\, \forall \rho \in {\bf
  R}^{2n},\alpha \in {\bf N}^{2n}\} .$$ 
We assume that $p-z$ is elliptic (in the sense that $(p-z)^{-1}\in S(m^{-1})$) for at least one value
$z\in {\bf C}$. Put $\Sigma =\overline{p({\bf R}^{2n})}=p({\bf
  R}^{2n})\cup \Sigma _\infty $, where $\Sigma _\infty $ is the set
of accumulation values of $p$ near $\rho = \infty $. Let
$p_1,p_2,...\in S(m)$,  
$$P\sim p+hp_1+...
\hbox{ in } S(m),\ h\to 0.$$ Let $\Omega \Subset {\bf C}$ be open simply connected
with $\overline{\Omega }\cap \Sigma _\infty =\emptyset$, $\Omega
\not\subset \Sigma $. Then as in \cite{Ha, HaSj}, we can construct
$\widetilde{p}\in S(m)$, such that
\ekv{hs.1}{
\widetilde{p}=p\hbox{ away from a compact set.}
} 
\ekv{hs.2}
{
\widetilde{p}-z\hbox{ is elliptic in }S(m),\hbox{ uniformly for }z\in
\overline{\Omega }.
}
The construction also shows that $\widetilde{p}$ can be chosen so that
$\widetilde{p}=p$ away from any given neighborhood of
$p^{-1}(\overline{\Omega })$. 

\par
Let $$\widetilde{P}=P+\widetilde{p}-p\sim \widetilde{p}+hp_1+...\in
S(m)$$
 By
$P$, $\widetilde{P}$ we also denote the corresponding $h$-Weyl
quantizations i.e.~the Weyl quantizations of $P(x,h \xi ;h) $ and 
$\widetilde{P}(x,h \xi ;h)$ respectively. (Sometimes it will also be convenient to indicate
the quantization so that if $a$ is a symbol, then $\mathrm{Op\,}(a)$
denotes the corresponding $h$-pseudodifferential operator.) 
Then we know that $(\widetilde{P}-z)^{-1}$ is a
well-defined uniformly bounded operator when $h$ is small, uniformly
for $z\in \overline{\Omega }$, and that $P$ has discrete spectrum in
$\Omega $ which is contained in any given neighborhood of
$\overline{\Omega }\cap \Sigma $ when $h$ is small enough. 

\par We also
recall that the eigenvalues in $\Omega $, counted with their algebraic
multiplicity, coincide with the zeros of the
function $z\mapsto \det (\widetilde{P}-z)^{-1}(P-z)=\det
(1-(\widetilde{P}-z)^{-1}(\widetilde{P}-P))$, counted with their 
multiplicity. In fact, if $z_0\in \Omega $, then its multiplicity
$m(z_0)$ as a zero of the determinant is 
$$
=\mathrm{tr\,}\frac{1}{2\pi i}\int_\gamma  (1+K(z))^{-1}\dot{K}(z)
dz=\mathrm{tr\,} \frac{1}{2\pi i}\int_\gamma
(z-P)^{-1}(z-\widetilde{P})\dot{K}(z) dz,
$$ 
where $\gamma $ is
a small circle centered at $z_0$,
$K(z)=(z-\widetilde{P})^{-1}(\widetilde{P}-P)$,
$\dot{K}(z)=(z-\widetilde{P})^{-1}-(z-\widetilde{P})^{-2}(z-P)$ and the dots
indicate derivatives with respect to $z$, so
$$
m(z_0)=\mathrm{tr\,}\frac{1}{2\pi i}\int_\gamma (z-P)^{-1}dz - 
\mathrm{tr\,}\frac{1}{2\pi i}\int_\gamma (z-P)^{-1}(z-\widetilde{P})^{-1}(z-P)dz.
$$
Here the first term to the right is the rank of the spectral
projection of $P$ at the eigenvalue $z_0$ ie the multiplicity of $z_0$
as an eigenvalue of $P$, and from Lemma 2.2 of \cite{SjVo}, we see
that the second term is equal to 
$$
-\mathrm{tr\,}\frac{1}{2\pi i}\int_\gamma (z-\widetilde{P})^{-1}dz=0.
$$

\par Now, consider the perturbed operator
\ekv{hs.3}
{
P_\delta =P+\delta Q,
}
where $0\le \delta \ll 1$ will depend on $h$ and $Q$ is the operator
of multiplication with $q\in H^s({\bf R}^n)$, satisfying
\ekv{hs.4}{\Vert q\Vert_{H^s}\le h^{\frac{n}{2}}.}
Here $s>n/2$ is fixed and we systematically use the semiclassical
Sobolev spaces in Section \ref{al}. 

\par Put 
\ekv{hs.5}
{
\widetilde{P}_\delta =\widetilde{P}+\delta Q.
}
If 
\ekv{hs.6}
{
\delta \ll 1,\quad h\ll 1,
}
we know from Section \ref{al} that $\| \delta Q\|_{L^2\to L^2}=\delta
\| q\|_{L^\infty }\ll 1$, and hence $(\widetilde{P}_\delta -z)^{-1}$
is a well-defined bounded operator when $h$ is small enough. The
spectrum of $P_\delta $ in $\Omega $ is discrete and coincides with
the zeros of 
$$
\det ((\widetilde{P}_\delta -z)^{-1}(P_\delta -z))=\det
(1-(\widetilde{P}_\delta -z)^{-1}(\widetilde{P}-P)).
$$ 
Notice here that $(\widetilde{P}_\delta -z)^{-1}(\widetilde{P}-P)$ is
a trace class operator and that again the multiplicities of the
eigenvalues of $P_{\delta }$ and of the zeros of the determinant
agree. 
It is also clear that 
$\sigma (P_\delta )\cap \overline{\Omega} $ is
contained in any given neighborhood of $\Sigma \cap \Omega $, when 
$h$ and $\delta $ are sufficiently small.

\par From Section \ref{al} we know that 
$Q={\cal O}(1):H^\sigma \to H^\sigma $ for $\sigma =s$, by
duality we get the same fact when $\sigma =-s$ and finally by
interpolation (or more directly by \no{al.1} applied to $q$) we get it
also for $\sigma =0$. Writing 
\ekv{hs.6.5}
{\widetilde{P}_\delta
  -z=(\widetilde{P}-z)(1+(\widetilde{P}-z)^{-1}\delta Q)=
(1+\delta Q(\widetilde{P}-z)^{-1})(\widetilde{P}-z),
}
and observing that $(\widetilde{P}-z)^{-1}\in \mathrm{Op}(S(\frac{1}{m}))$ is uniformly bounded: 
$H^s\to H^s$, $H^{-s}\to H^{-s}$, when $z\in \overline{\Omega }$, we 
see that 
\ekv{hs.7}
{
(\widetilde{P}_\delta -z)^{-1}={\cal O}(1):H^s\to H^s,\ H^{-s}\to
H^{-s},\ H^0\to H^0,
}
uniformly when $z\in\overline{\Omega }$ and \no{hs.6} holds, and
similarly for $(1+(\widetilde{P}-z)^{-1}\delta Q)^{-1}$, 
$(1+\delta Q(\widetilde{P}-z)^{-1})^{-1}$.

\par Put 
\ekv{hs.8}
{
P_{\delta ,z}:=(\widetilde{P}_\delta -z)^{-1}(P_\delta -z)=
1-(\widetilde{P}_\delta -z)^{-1}(\widetilde{P}-P)=:1-K_{\delta ,z},
}
\ekv{hs.9}
{
S_{\delta ,z}:=P_{\delta ,z}^*P_{\delta ,z}=1-(K_{\delta ,z}+K_{\delta ,z}^*-K_{\delta ,z}^*K_{\delta ,z})=:1-L_{\delta ,z}.
}
Notice that 
\ekv{hs.10}
{K_{\delta ,z},
L_{\delta ,z}={\cal O}(1):H^{-s}\to H^s,
}
when \no{hs.6} holds. For $0\le \alpha \le 1/2$, let $\pi _\alpha
=1_{[0,\alpha ]}(S_{\delta ,z})$ be the spectral projection
corresponding to the spectrum of $S_{\delta ,z}$ in the interval
$[0,\alpha ]$. 

\par We shall study $H^s$ regularization and localization of
$\pi _\alpha $ and of the ana\-logous spectral projections for $(P_\delta
-z)^*(P_\delta -z)$. The reader who is not too much interested in the
technicalities may proceed directly to Proposition \ref{hs1} at the
end of this section.

\par Apply $\pi _\alpha $ to \no{hs.9}:
$$
\pi _\alpha (1-S_{\delta ,z}\pi _\alpha )=L_{\delta ,z}\pi _\alpha .
$$
Here $\| S_{\delta ,z}\pi _\alpha \| \le 1/2$, so $1-S_{\delta ,z}\pi
_\alpha $ is invertible with inverse of norm $\le 2$. It follows
that 
\ekv{hs.11}
{
\pi _\alpha =L_{\delta ,z}\pi _\alpha (1-S_{\delta ,z}\pi _\alpha )^{-1},
}
so under the assumption \no{hs.6}, we see that 
\ekv{hs.12}
{
\pi _\alpha ={\cal O}(1):\, L^2\to H^s,
}
and since $\pi _\alpha =\pi _\alpha \pi _\alpha ^*$ we even get $\pi
_\alpha ={\cal O}(1):\, H^{-s}\to H^s$. 

\par Since $L_{\delta ,z}$ is compact, we know that the range ${\cal
  R}(\pi _\alpha )$ of $\pi _\alpha $ is of finite dimension, $N$. Let 
$e_1,...,e_N$ be an orthonormal basis in this space. An equivalent way
of stating \no{hs.12} is then
\ekv{hs.13}
{
\| \sum_1^N \lambda _je_j\|_{H^s}\le {\cal O}(1)\Vert \lambda
\Vert_{\ell^2},\ \forall \lambda =(\lambda _1,..,\lambda _N)\in{\bf
  C}^N\simeq \ell^2(\{ 1,2,..,N\}).
}

\par If $\chi \in C_b^\infty ({\bf R}^n)=\{ f\in C^\infty ({\bf
  R}^n);\, \partial ^\alpha f \hbox{ is bounded for every }\alpha \in
{\bf N}^n\}$, we have 
$$
[\widetilde{P}_\delta ,\chi ]=[\widetilde{P},\chi ]\in 
h\mathrm{Op\,}(S(m)).
$$
Combining this with \no{hs.6.5} and the fact mentioned right after 
\no{hs.7}, 
we see that 
\ekv{hs.14}
{
(\widetilde{P}_\delta -z)^{-1}[\widetilde{P}_\delta ,\chi ],\ 
[\widetilde{P}_\delta ,\chi ](\widetilde{P}_\delta -z)^{-1}\ ={\cal
  O}(h):
H^{\sigma }\to H^\sigma ,\ \sigma =\pm s,0.
}

\par From this, it is standard to deduce that 
\ekv{hs.15}
{
\chi _1(\widetilde{P}_\delta -z)^{-1}\chi _0={\cal O}(h^\infty ):\ 
H^\sigma \to H^\sigma ,\ \sigma =\pm s,0,
}
if $\chi _1,\chi _0\in C_b^\infty ({\bf R}^n)$ and $\mathrm{dist\,}
(\mathrm{supp\,}\chi _0,\mathrm{supp\,}\chi _1)>0$. In fact, for any
$M\in {\bf N}^*$, choose $\psi _1,...,\psi _M\in C_b ^\infty ({\bf
  R}^n)$, such that $\mathrm{supp\,}\psi _M\cap\mathrm{supp\,}\chi
_1=\emptyset$, $\psi _{j+1}=1$ on $\mathrm{supp\,}\psi _j$, $\psi
_1=1$ on $\mathrm{supp\,}\chi _0$, and use the telescopic formula,
\ekv{hs.15.5}
{\chi _1(\widetilde{P}_\delta -z)^{-1}\chi _0=
\pm \chi _1(\widetilde{P}_\delta -z)^{-1}
[\widetilde{P}_\delta ,\psi _M](\widetilde{P}_\delta -z)^{-1}...
[\widetilde{P}_\delta ,\psi _1](\widetilde{P}_\delta -z)^{-1}\chi _0.
}

\par Let 
\ekv{hs.16}
{
K=\pi _x(\mathrm{supp\,}(\widetilde{p}-p))
}
be the $x$-space projection of $\mathrm{supp\,}(\widetilde{p}-p)$, so
that $K$ is compact. Combining \no{hs.8}, \no{hs.15}, we see that 
\ekv{hs.17}
{
\chi K_{\delta ,z},\, K_{\delta ,z}\chi ={\cal O}(h^\infty ):\
H^\sigma \to H^\sigma ,\ \sigma =\pm s,0,
}
when $\chi \in C_b^\infty ({\bf R}^n)$ satisfies
$\mathrm{supp\,}\chi \cap K=\emptyset $. From \no{hs.9} we get the
same conclusion for $L_{\delta ,z}$ and then we get from \no{hs.11}
that 
\ekv{hs.18}
{
\chi \pi _\alpha ={\cal O}(h^\infty ):\ L^2\to H^s,
}
if $\chi \in C_b^\infty ({\bf R}^n)$, and $\mathrm{supp\,}\chi \cap
K=\emptyset $. Using that $\pi _\alpha =\pi _\alpha ^2$ and that $\pi
_\alpha ={\cal O}(1):H^{-s}\to H^s$, this can be sharpened to the
statement that 
$$
\chi \pi _\alpha ,\ \pi _\alpha \chi ={\cal O}(h^\infty ):\, H^{-s}\to H^s.
$$

\medskip
\par We also need to establish the corresponding results for $P_\delta
-z$. Let 
\ekv{hs.18.1}
{S_\delta =(P_\delta -z)^*(P_\delta -z),\quad 
\widetilde{S}_\delta =(\widetilde{P}_\delta -z)^*(\widetilde{P}_\delta -z),
}
viewed as self-adjoint Friedrichs extensions from
$(\widetilde{P}_\delta -z)^{-1}(H(m))$
with quadratic form domain $H(m)$. 
Then 
$$
S_\delta =\widetilde{S}_\delta +R,
$$
where 
\ekv{hs.18.3}
{R=(P-\widetilde{P})^*(\widetilde{P}_\delta -z)+(\widetilde{P}_\delta -z)^*(P-\widetilde{P})+(P-\widetilde{P})^*(P-\widetilde{P}),}
and we see that 
\ekv{hs.18.4}
{R={\cal O}(1):H^{-s}\to H^s.}
It follows that 
\eekv{hs.18.5}{
(w-S_\delta )^{-1}&=&(w-\widetilde{S}_\delta )^{-1}+ (w-S_\delta
)^{-1}R(w-\widetilde{S}_\delta )^{-1}}{
&=&(w-\widetilde{S}_\delta )^{-1}- (w-\widetilde{S}_\delta
)^{-1}R(w-S_\delta )^{-1}.}

\par If $\widetilde{m}$ is an order function on ${\bf R}^{2n}$, we
define $H(\widetilde{m})$ for $h>0$ small enough, to be the space
$\widetilde{M}^{-1}L^2({\bf R}^n)$, where $\widetilde{M}\in
\mathrm{Op}(S(\widetilde{m}))$ is an elliptic operator, so that
$\widetilde{M}^{-1}\in \mathrm{Op\,}(S(\frac{1}{m}))$.

\begin{remark}\label{hs0} {\rm For future reference we notice that
  $S_\delta $ coincides with $\widehat{S}_\delta :=(P_\delta
  -z)^*(P_\delta -z)$ with domain ${\cal D}(\widehat{S}_\delta )=\{
  u\in H(m);\, (P_\delta -z)u\in H(m)\}$. In fact, $\widehat{S}_\delta
  $ is a closed operator, with domain contained in the
  quadratic form domain $H(m)$ of $S_\delta $, so it suffices to check
  that $\widehat{S}_\delta $ is self-adjoint. Clearly this operator is
  symmetric so it suffices to check that 
$\widehat{S}_\delta ^*\subset \widehat{S}_\delta $. To shorten
notations, assume that $z=0$: If $u\in {\cal D}(\widehat{S}_\delta
^*)$, $\widehat{S}_\delta ^*u=v$, then $(\widehat{S}_\delta \phi
|u)=(\phi |v) $ for all $\phi \in {\cal D}(\widehat{S}_\delta )$, so 
$(P_\delta \phi |P_\delta u)=(\phi |v)={\cal O}(\Vert \phi
\Vert_{H(m)})$, so $(\widetilde{P}_\delta \phi |P_\delta u)={\cal O}(\Vert
\phi \Vert_{H(m)})$, implying that $P_\delta u\in L^2$, since
$\widetilde{P}_\delta :H(m)\to L^2$ is bijective and ${\cal
{\cal D}(\widehat{S}_\delta )}$ is dense in $H(m)$. Using $(P_\delta \phi
|P_\delta u)=(\phi |v)$ again, we get $P^*_\delta P_\delta u=v$ in the
sense of distributions and since $P$ is elliptic near infinity, we
deduce that $u,P_\delta u\in H(m)$, so $u\in {\cal
  D}(\widehat{S}_\delta )$.}
\end{remark}

\par
Let $f\in C_0^\infty(\mathrm{neigh\,}(0,{\bf R}))$ and let
$\widetilde{f}
\in C_0^\infty (\mathrm{neigh\,}(0,{\bf C}))$ be an almost holomorphic
extension. Since $\widetilde{S}_\delta $ has no spectrum in a fixed
neighborhood of $0$, we get (using the Cauchy-Riemann formula
$$
f(S_\delta )=-\frac{1}{\pi }\int \overline{\partial}\widetilde{f}(w)
(w-S_\delta )^{-1}L(dw)) 
$$
 for $f$ supported in that neighborhood,
\eekv{hs.18.7}
{f(S_\delta )&=&- \int \overline{\partial }\widetilde{f}(w) (w-S_\delta
)^{-1}
R(w-\widetilde{S}_\delta )^{-1}\frac{L(dw)}{\pi }}
{
&=& \int \overline{\partial }\widetilde{f}(w) (w-\widetilde{S}_\delta
)^{-1}
R(w-S_\delta )^{-1}\frac{L(dw)}{\pi }
} 
Here, $(w-\widetilde{S}_\delta )^{-1}={\cal O}(1):H^\sigma \to
H^\sigma $, $\sigma =\pm s,0$, so we conclude that 
$$
f(S_\delta )={\cal O}(1):H^{-s}\to
  L^2 \hbox{ and }L^2\to H^s. 
$$
Then 
$f^2(S_\delta )={\cal O}(1):\, H^{-s}\to H^s$. Let $\pi _\alpha
=1_{[0,\alpha ]}(S_\delta )$. It follows that for $0\le \alpha \ll 1$:
\ekv{hs.19}
{
\pi _\alpha ={\cal O}(1):\,H^{-s}\to H^s, 
}
so \no{hs.13} remains valid.
Using the same telescopic formula as above, we shall next show that 
\ekv{hs.20}
{
\chi \pi _\alpha ,\, \pi _\alpha \chi \, ={\cal O}(h^{\infty }):\,
H^{-s}\to H^s,
}
if $\chi \in C_b^{\infty }({\bf R}^n)$ has the property that
$\mathrm{supp\,}(\chi )\cap K=\emptyset$.

\par For $w\in \mathrm{neigh\,}(0)$, we can write
$\widetilde{S}_0-w=\Lambda _1\Lambda _2$, where $\Lambda _j\in
\mathrm{Op\,}(S(m))$ are elliptic. On the other hand (for $\delta \ll 1$), we have 
$$
\widetilde{S}_\delta -w=\widetilde{S}_0-w+(\widetilde{P}-z)^*\delta q
+ \delta \overline{q}(\widetilde{P}-z)+\delta ^2|q|^2.
$$
We get 
$$
\widetilde{S}_\delta -w=\Lambda _1 (1+\underbrace{\Lambda _1^{-1} ((\widetilde{P}-z)^*\delta q
+ \delta \overline{q}(\widetilde{P}-z)+\delta ^2|q|^2)\Lambda
_2^{-1}}_
{={\cal O}(\delta ):\, H^\sigma \to H^\sigma }
)\Lambda _2,
$$
so
$$
(\widetilde{S}_\delta -w)^{-1}=\Lambda _2^{-1}A\Lambda _1^{-1},
$$
where $A={\cal O}(1):\, H^\sigma \to H^\sigma $ and consequently 
\ekv{hs.21}
{
(\widetilde{S}_\delta -w)^{-1}={\cal O}(1):\ H(\frac{\langle \xi
  \rangle^\sigma }{m})\to H(m\langle \xi \rangle^\sigma ).
}

\par Next, consider $(w-S_\delta )^{-1}$ in
\no{hs.18.3}--\no{hs.18.5}. Using \no{hs.21}, we see
that 
$$
(w-S_\delta )^{-1}={\cal O}(\frac{1}{|\Im w|}):\ 
H(\frac{\langle \xi \rangle^\sigma }{m})\to L^2+H(m\langle \xi \rangle
^\sigma ).
$$
Reinjecting this information into the last expression in \no{hs.18.5},
we see that 
\ekv{hs.22}
{
(w-S_\delta )^{-1}={\cal O}(\frac{1}{|\Im w|}):\ H(\frac{\langle \xi
  \rangle^\sigma }{m})\to H(m\langle \xi \rangle^\sigma ).
 }

\par If $\psi \in C_b^{\infty }({\bf R}^n)$ we next see that
\eekv{hs.23}
{
[\widetilde{S}_\delta ,\psi ]&=& [\widetilde{P}^*,\psi
](\widetilde{P}-z+\delta q)+(\widetilde{P}^*-\overline{z}+\delta
\overline{q})[\widetilde{P},\psi ]
}
{
&=& {\cal O}(h):\ H(m\langle \xi \rangle^\sigma )\to
H(\frac{1}{m}\langle \xi \rangle^\sigma ),
} 
and similarly with $\widetilde{S}_\delta $ replaced by $S_\delta
$. We conclude that 
\ekv{hs.24}
{
(w-\widetilde{S}_\delta )^{-1}[\widetilde{S}_\delta ,\psi ]={\cal
  O}(h):
H(m\langle \xi \rangle^\sigma )\to H(m\langle \xi \rangle^\sigma ),
}
\ekv{hs.25}
{
[\widetilde{S}_\delta ,\psi ](w-\widetilde{S}_\delta )^{-1}={\cal
  O}(h):
H(\frac{\langle \xi \rangle^\sigma}{m} )\to H(\frac{\langle \xi \rangle^\sigma}{m} ),
}
and we have the analogous estimates with $\widetilde{S}_\delta $
replaced by $S_\delta $ and ${\cal O}(h)$ replaced by ${\cal O}(h/|\Im
w|)$.

\par Now, let $\chi $ be as in \no{hs.20} and choose $\chi _0\in
C_0^\infty ({\bf R}^n)$ such that $\chi _0=1$ near $K$,
$\mathrm{supp\,}\chi \cap \mathrm{supp\,}(\chi _0)=\emptyset $. Choose
$\psi _1,...,\psi _M$ as in the telescopic formula \no{hs.15.5} with
$\chi _1$ there equal to $\chi $. Then we get
\eeekv{hs.26}
{
&&\chi (w-\widetilde{S}_\delta )^{-1}\chi _0=
}
{
&&\hskip -8truemm\pm \chi (w-\widetilde{S}_\delta )^{-1}[\widetilde{S}_\delta ,\psi _M]
(w-\widetilde{S}_\delta )^{-1}[\widetilde{S}_\delta ,\psi _{M-1}]...
(w-\widetilde{S}_\delta )^{-1}[\widetilde{S}_\delta ,\psi
_1](w-\widetilde{S}_\delta )^{-1}\chi _0
}{&&={\cal O}(h^M):\ H(\frac{\langle \xi \rangle^\sigma }{m})\to
  H(m\langle \xi \rangle^\sigma ).}

\par Write $R=\chi _0R+(1-\chi _0)R$. Here $(1-\chi
_0)(P-\widetilde{P})^*= {\cal O}(h^\infty ):\, H(m_1)\to H(m_2)$ for
all order functions, $m_1,m_2$, so (cf \no{hs.18.3}) 
$$
(1-\chi _0)(P-\widetilde{P})^*(\widetilde{P}_\delta -z),\ (1-\chi
_0)(P-\widetilde{P})^*(P-\widetilde{P})={\cal O}(h^\infty ):\
H^{-s}\to H(m_2).
$$ 
Moreover,
\begin{eqnarray*}
&&(1-\chi _0)(\widetilde{P}_\delta -z)^*(P-\widetilde{P})\\
&=&(1-\chi _0)(\widetilde{P}-z)^*(P-\widetilde{P})+\delta
\overline{q}(1-\chi _0)(P-\widetilde{P})
\\
&=&{\cal O}(h^\infty ):\ H(m_1)\to H^s, 
\end{eqnarray*}
and we conclude that
\ekv{hs.27}
{
(1-\chi _0)R={\cal O}(h^\infty ):\ H^{-s}\to H^s.
}
Combining this with \no{hs.26}, we get 
\ekv{hs.28}
{
\chi (w-\widetilde{S}_\delta )^{-1}R={\cal O}(h^\infty ):H^{-s}\to
H(m\langle \xi \rangle^s ).
}
Using this and \no{hs.22} in the second expression for $f(S_\delta )$
in \no{hs.18.7}, we see that 
\ekv{hs.29}
{
\chi f(S_\delta )={\cal O}(h^\infty ):\ H(\frac{1}{m\langle \xi 
\rangle ^s})\to H(m\langle \xi
\rangle ^s).
}
Choosing $f=1$ on $[0,\alpha ]$, we see that 
$$
\chi \pi _\alpha =\chi f(S_\delta )\pi _\alpha ={\cal O}(h^\infty ):\
H^{-s}\to H(m\langle \xi \rangle^s),
$$
which implies the estimate on $\chi \pi _\alpha $ in
\no{hs.20}, now with $\pi _\alpha =1_{[0,\alpha ]}(S_\delta )$. Passing to the adjoints we get the estimate on $\pi
_\alpha \chi $ and this completes the verification of \no{hs.20}.
\begin{prop}\label{hs1}
Let $P,p,\widetilde{P},\widetilde{p}$ be as in the beginning of this
section. Let $P_\delta $, $\widetilde{P}_\delta $ be given by
(\ref{hs.3}), (\ref{hs.4}), (\ref{hs.5}) (where $s>n/2$ is fixed) and
make the assumption (\ref{hs.6}). Define $P_{\delta ,z}$, $S_{\delta
  ,z}$ as in (\ref{hs.8}), (\ref{hs.9}), and $S_\delta $ as in
(\ref{hs.18.1}) and realize $S_\delta $ as the Friedrichs
extension. Let $\pi _\alpha $ denote either $1_{[0,\alpha ]}(S_{\delta
,z})$ for $0\le \alpha \le 1/2$, or $1_{[0,\alpha ]}(S_\delta )$ for
$0\le \alpha \ll 1$. In both cases, we have $\pi _\alpha ={\cal
  O}(1):H^{-s}\to H^s$ uniformly with respect to $\alpha
,h$, implying (\ref{hs.13}). 
Moreover, if $\chi \in C_b^\infty ({\bf R}^n)$ is independent of
$h$ and $\mathrm{supp\,}\chi \cap \pi
_x(\mathrm{supp\,}(\widetilde{p}-p))=\emptyset $ (cf (\ref{hs.16})),
then $\chi \pi _\alpha $, $\pi _\alpha \chi $ are $={\cal O}(h^\infty
):H^{-s}\to H^s$. In the second case we also have $\chi \pi _\alpha
={\cal O}(h^\infty ):H^{-s}\to H(m\langle \xi \rangle ^s)
$.
\end{prop}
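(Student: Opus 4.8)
The statements have essentially been established in the discussion preceding the proposition; here I outline the logical skeleton and indicate where the real work lies. The plan is to treat the two choices of $\pi_\alpha$ separately, in each case first proving the global regularization bound $\pi_\alpha={\cal O}(1):H^{-s}\to H^s$ and then the localization estimate.

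For $\pi_\alpha=1_{[0,\alpha]}(S_{\delta,z})$ with $0\le\alpha\le 1/2$, I would start from $S_{\delta,z}=1-L_{\delta,z}$ in \no{hs.9}, apply $\pi_\alpha$ to get $\pi_\alpha(1-S_{\delta,z}\pi_\alpha)=L_{\delta,z}\pi_\alpha$, and note that $\|S_{\delta,z}\pi_\alpha\|\le\alpha\le 1/2$, so that $1-S_{\delta,z}\pi_\alpha$ is boundedly invertible; this yields \no{hs.11}. Since $L_{\delta,z}={\cal O}(1):H^{-s}\to H^s$ by \no{hs.10} under assumption \no{hs.6}, one reads off first $\pi_\alpha={\cal O}(1):L^2\to H^s$ as in \no{hs.12} and then, using $\pi_\alpha=\pi_\alpha\pi_\alpha^*$, the bound from $H^{-s}$ to $H^s$; \no{hs.13} is simply the restatement of this in an orthonormal basis of the range of $\pi_\alpha$. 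For the localization I would invoke the commutator bound \no{hs.14} and the telescopic formula \no{hs.15.5} to get \no{hs.15}, hence \no{hs.17} for $K_{\delta,z}$ and $L_{\delta,z}$ when $\mathrm{supp}\,\chi\cap K=\emptyset$, feed this back into \no{hs.11} to obtain $\chi\pi_\alpha={\cal O}(h^\infty):L^2\to H^s$, and sharpen it by means of $\pi_\alpha=\pi_\alpha^2$ together with the $H^{-s}\to H^s$ bound.

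For $\pi_\alpha=1_{[0,\alpha]}(S_\delta)$ the operator is unbounded, so I would realize $S_\delta$ as the Friedrichs extension (consistent with Remark \ref{hs0}) and write $S_\delta=\widetilde{S}_\delta+R$ with $R$ as in \no{hs.18.3} and $R={\cal O}(1):H^{-s}\to H^s$ by \no{hs.18.4}. Using an almost holomorphic extension $\widetilde{f}$ of a cutoff $f$ supported near $0$, the Cauchy--Riemann functional-calculus formula $f(S_\delta)=-\frac{1}{\pi}\int\overline{\partial}\widetilde{f}(w)(w-S_\delta)^{-1}L(dw)$, the resolvent identities \no{hs.18.5}, and the absence of spectrum of $\widetilde{S}_\delta$ near $0$, I would obtain the two representations \no{hs.18.7}; since $(w-\widetilde{S}_\delta)^{-1}={\cal O}(1):H^\sigma\to H^\sigma$ this gives $f(S_\delta)={\cal O}(1):H^{-s}\to L^2$ and $L^2\to H^s$, hence $f^2(S_\delta)={\cal O}(1):H^{-s}\to H^s$, and choosing $f\equiv 1$ on $[0,\alpha]$ for $\alpha$ small produces \no{hs.19}. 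For the localization I would upgrade the resolvent bounds to the weighted scale $H(m\langle\xi\rangle^\sigma)$: factoring $\widetilde{S}_0-w=\Lambda_1\Lambda_2$ with $\Lambda_j$ elliptic in $\mathrm{Op}(S(m))$ gives \no{hs.21}, and reinjecting this into \no{hs.18.5} gives \no{hs.22}; then the commutator bounds \no{hs.23}--\no{hs.25} and the telescopic formula \no{hs.26}, combined with the splitting $R=\chi_0R+(1-\chi_0)R$ and the tail estimate \no{hs.27}, yield \no{hs.28}, which inserted into the second representation of $f(S_\delta)$ in \no{hs.18.7} gives \no{hs.29}; taking $f\equiv 1$ on $[0,\alpha]$ produces $\chi\pi_\alpha=\chi f(S_\delta)\pi_\alpha={\cal O}(h^\infty):H^{-s}\to H(m\langle\xi\rangle^s)$, and passing to adjoints gives the bound on $\pi_\alpha\chi$.

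The main obstacle is this last part: tracking the orders in the weighted Sobolev scale while iterating the resolvent identity for the unbounded operator $S_\delta$, and balancing the $|\Im w|^{-1}$ losses against the factor $h$ gained from each commutator in the telescopic expansion, so that the $w$-integral in the functional-calculus formula still converges and produces an overall ${\cal O}(h^\infty)$ bound; the factorization $\widetilde{S}_0-w=\Lambda_1\Lambda_2$ and the careful bookkeeping with the order functions $m_1,m_2$ in \no{hs.27} are what make this go through. The bounded case $S_{\delta,z}$ is comparatively routine once \no{hs.11} is in hand.
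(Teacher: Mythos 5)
Your proposal reproduces the paper's own argument step for step: the identity $\pi_\alpha(1-S_{\delta,z}\pi_\alpha)=L_{\delta,z}\pi_\alpha$ together with the $H^{-s}\to H^s$ bound on $L_{\delta,z}$ for the bounded case, and the Friedrichs realization with the Cauchy--Riemann functional calculus, the factorization $\widetilde{S}_0-w=\Lambda_1\Lambda_2$, the weighted resolvent bounds, and the telescopic commutator expansion for the unbounded case. This is precisely the chain of estimates (\ref{hs.11})--(\ref{hs.29}) preceding the proposition, so the proof is correct and essentially the same as the one in the paper.
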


\section{Grushin problems}\label{gr} \setcounter{equation}{0} 
Let $P:{\cal H}\to {\cal H}$ be a bounded operator, where ${\cal H}$
is a complex separable Hilbert space. Following the standard
definitions (see \cite{GoKr}) we define the singular values of $P$ to
be the decreasing sequence $s_1(P)\ge s_2(P)\ge ...$ of eigenvalues of
the self-adjoint operator $(P^*P)^{1/2}$ as long as these eigenvalues
lie above the supremum of the essential spectrum. If there are only
finitely many such eigenvalues, $s_1(P),...,s_k(P)$ then we define
$s_{k+1}(P)=s_{k+2}(P)=...$ to be the supremum of the essential
spectrum of $(P^*P)^{1/2}$. When $\mathrm{dim\,}{\cal H}=M<\infty $
our sequence is finite (by definition); $s_1\ge s_2\ge ...\ge s_M$,
otherwise it is infinite. Using that if $P^*Pu=s_j^2u$, then
$PP^*(Pu)=s_j^2Pu$ and similarly with $P$ and $P^*$ permuted, we see
that $s_j(P^*)=s_j(P)$. Strictly speaking, $P^*P:\,{\cal N}(P)^\perp
\to {\cal N}(P)^\perp$ and $PP^*:\,{\cal N}(P^*)^\perp
\to {\cal N}(P^*)^\perp$ are unitarily equivalent via the map
$P(P^*P)^{-1/2}:\, {\cal N}(P)^\perp \to {\cal N}(P^*)^\perp$ and its
inverse
$P^*(PP^*)^{-1/2}:\, {\cal N}(P^*)^\perp \to {\cal N}(P)^\perp$. (To
check this, notice that the relation $P(P^*P)=(PP^*)P$ on ${\cal
  N}(P)^\perp$ implies $P(P^*P)^\alpha =(PP^*)^\alpha P$ on the same
space for every $\alpha \in {\bf R}$.)

\par In the case when $P$ is a Fredholm operator of index $0$, it will
be convenient to introduce the increasing sequence $0\le t_1(P)\le
t_2(P)\le ...$ consisting first of all eigenvalues of $(P^*P)^{1/2}$
below the infimum of the essential spectrum and then, if there are
only finitely many such eigenvalues, we repeat indefinitely that
infimum. (The length of the resulting sequence is the
dimension of ${\cal H}$.) When $\mathrm{dim\,}{\cal H}=M<\infty $, we
have $t_j(P)=s_{M+1-j}(P)$. Again, we have $t_j(P^*)=t_j(P)$ (as
reviewed in \cite{HaSj}). Moreover, in the case when $P$ has a 
bounded inverse, we see that 
\ekv{}{s_j(P^{-1})=\frac{1}{t_j(P)}.}

\par Let $P$ be a Fredholm operator of index $0$. Let $1\le N<\infty $
and let $R_+:{\cal H}\to {\bf C}^N$, $R_-:{\bf C}^N\to {\cal H}$ be
bounded operators. Assume that 
\ekv{gr.01}{
{\cal P}=\left(\begin{array}{ccc}P &R_-\\ R_+ &0 \end{array}\right):
{\cal H}\times {\bf C}^N\to {\cal H}\times {\bf C}^N
}
is bijective with a bounded inverse
\ekv{gr.02}
{
{\cal E}=\left(\begin{array}{ccc}E &E_+\\E_- &E_{-+} \end{array}\right)
}

\par
Recall (for instance from \cite{SjZw2}) that $P$ has a bounded inverse precisely when 
$E_{-+}$ has, and when this happens we have the relations,
\ekv{s.8}{
P^{-1}=E-E_+E_{-+}^{-1}E_-,\quad E_{-+}^{-1}=-R_+P^{-1}R_-.
}
Recall (\cite{GoKr}) that if $A,B$ are bounded operators, then we have
the general estimates,
\ekv{s.9}
{
s_{n+k-1}(A+B)\le s_n(A)+s_k(B),
}
\ekv{s.10}
{
s_{n+k-1}(AB)\le s_n(A)s_k(B),
}
in particular for $k=1$, we get
$$
s_n(AB)\le \Vert A\Vert s_n(B),\ s_n(AB)\le s_n(A)\Vert B\Vert,\
s_n(A+B)\le s_n(A)+\Vert B\Vert . 
$$ 
Applying this to the second part of \no{s.8}, we get 
$$
s_k(E_{-+}^{-1})\le \Vert R_-\Vert \Vert R_+\Vert s_k(P^{-1}),\ 1\le
k\le N
$$
implying
\ekv{s.11}
{
t_k(P)\le \Vert R_-\Vert \Vert R_+\Vert t_k(E_{-+}),\ 1\le k\le N.
}
By a perturbation argument, we see that this holds also in the
case when $P$, $E_{-+}$ are non-invertible. 

\par Similarly from the first part of \no{s.8}, we get 
$$
s_k(P^{-1})\le \Vert E\Vert +\Vert E_+\Vert \Vert E_-\Vert s_k(E_{-+}^{-1}),
$$
leading to
\ekv{s.12}
{
t_k(P)\ge \frac{t_k(E_{-+})}{\| E\| t_k(E_{-+})+\Vert E_+\Vert\Vert E_-\Vert}.
}
Again this can be extended to the non-necessarily invertible case by
means of small perturbations.

\par Next, we recall from \cite{HaSj} a natural construction of an
associated Grushin problem to a given operator. Let $P_0:{\cal H}\to
{\cal H}$ be a Fredholm operator of index $0$ as above. Assume that
the first $N$ singular values $t_1(P_0)\le t_2(P_0)\le ...\le t_N(P_0)$
correspond to discrete eigenvalues of $P_0^*P_0$ and assume that $t_{N+1}(P_0)$ is
strictly positive. In the following we sometimes write $t_j$ instead
of $t_j(P_0)$ for short.

\par Recall that $t_j^2$ are the first
eigenvalues both for $P_0^*P_0$ and $P_0P_0^*$.
%
Let $e_1,...,e_N$ and
$f_1,...,f_N$ be corresponding orthonormal systems of eigenvectors
of $P_0^*P_0$ and $P_0P_0^*$ respectively. They can be chosen so that
\ekv{gr.1} { P_0e_j=t _jf_j,\ P_0^*f_j=t_je_j.
} Define $R_+:L^2\to {\bf C}^N$ and $R_-:{\bf C}^N\to L^2$ by
\ekv{gr.2}{R_+u(j)=(u|e_j),\ R_-u_-=\sum_1^Nu_-(j)f_j.}  
As in \cite{HaSj}, the Grushin problem \ekv{gr.3} {
\left\{
\begin{array}{ll}P_0u+R_-u_-=v,\\ R_+u=v_+,
 \end{array} \right.  } has a unique solution $(u,u_-)\in L^2\times
{\bf C}^N$ for every $(v,v_+)\in L^2\times {\bf C} ^N$, given by \ekv{gr.4} {
\left\{\begin{array}{ll} u=E^0v+E_+^0v_+,\\ u_-=E_-^0v+E_{-+}^0v_+,
\end{array}\right.}  where
\begin{eqnarray}\label{gr.4.5} E^0_+v_+=\sum_1^Nv_+(j)e_j,& E^0_-v(j)=(v|f_j),\\
E^0_{-+}=-{\rm diag\,}(t _j),& \Vert E^0\Vert \le
{1\over t_{N+1}}.\nonumber
\end{eqnarray}
$E^0$ can be viewed
as the inverse of $P_0$ as an operator from the orthogonal space
$(e_1,e_2,...,e_N)^\perp$ to $(f_1,f_2,...,f_N)^\perp$.
\par
We notice that in this case, the norms of $R_+$ and $R_-$ are equal to
1, so \no{s.11} tells us that $t_k(P_0)\le t_k(E^0_{-+})$ for $1\le k\le
N$, but of course the expression for $E^0_{-+}$ in \no{gr.4.5} implies
equality. 

\par Let $Q\in {\cal L}({\cal H}, {\cal H}) $ and put $P_\delta
=P_0-\delta Q$ (where we sometimes put a minus sign in front of the
perturbation for notational convenience). We are particularly
interested in the case when $Q=Q_\omega u=q_\omega u$ is the operator of
multiplication with a random function $q_\omega $. Here $\delta >0$
is a small parameter. Choose $R_\pm$ as in \no{gr.2}. Then if $\delta
<t_{N+1}$ and $\Vert Q\Vert\le 1$, 
the perturbed Grushin problem 
\ekv{gr.5}{ \left\{
\begin{array}{ll}P_\delta u+R_-u_-=v,\\ R_+u=v_+,
 \end{array} \right.  } is well posed and has the solution 
\ekv{gr.6}
{ \left\{\begin{array}{ll} u=E^\delta v+E_+^\delta v_+,\\
u_-=E_-^\delta +E_{-+}^\delta v_+,
\end{array} \right.}  where
\ekv{gr.6.5}
{
{\cal E}^\delta =\left(\begin{array}{ccc}E^\delta &E_+^\delta \\
E_-^\delta &E_{-+}^\delta
 \end{array}\right)
}
is obtained from ${\cal E}^0$ by
\ekv{gr.6.6}{
{\cal E}^\delta ={\cal E}^0\left( 1-\delta
\left(\begin{array}{ccc}Q E^0 & Q E_+^0 \\
0&0 \end{array}\right)\right) ^{-1}.
}
Using the Neumann series, we get
\ekv{gr.7} {
E_{-+}^\delta =E_{-+}^0+\delta E_-^0 Q E_+^0+ \delta^2
E_-^0 Q E^0 Q E_+^0+ \delta^3 E_-^0 Q
(E^0 Q )^2 E_+^0+...  }
We also get 
\ekv{gr.7.1}
{
E^\delta =E^0+\sum_1^{\infty }\delta ^kE^0(QE^0)^k
}
\ekv{gr.7.2}
{
E_+^\delta =E_+^0+\sum_1^{\infty }\delta ^k(E^0Q)^kE_+^0
}

\ekv{gr.7.3}
{
E_-^\delta =E_-^0+\sum_1^{\infty }\delta ^kE_-^0(QE^0)^k.
}
\par The leading perturbation in $E_{-+}^\delta $ is
$\delta M $, where $M =E_-^0 Q E_+^0: {\bf C}^N\to
{\bf C}^N$ has the matrix \ekv{gr.8}{ M(\omega )_{j,k}=(Q e_k|f_j),
} which in the multiplicative case reduces to \ekv{gr.9} { M(\omega
)_{j,k}=\int q (x)e_k(x)\overline{f_j(x)}dx.  }

\par Put $\tau _0=t_{N+1}(P_0)$ and recall the 
assumption
\ekv{s.13}
{
\Vert Q\Vert \le 1.
}
Then, if $\delta \le \tau _0/2$, the new Grushin problem is well posed
with an inverse ${\cal E}^{\delta }$ given in
\no{gr.6.5}--\no{gr.7.3}. We get 
\ekv{s.14}
{
\Vert E^\delta \Vert \le \frac{1}{1-\frac{\delta }{\tau _0}}
\Vert E^0\Vert \le \frac{2}{\tau _0},\quad
\| E_\pm ^\delta \| \le \frac{1}{1-\frac{\delta }{\tau _0}}\le 2,  
}
\ekv{s.15}
{
\Vert E_{-+}^\delta -(E_{-+}^0+\delta E_-^0QE_+^0)\Vert \le
\frac{\delta ^2}{\tau _0}\frac{1}{1-\frac{\delta }{\tau_0}}\le 
2\frac{\delta ^2}{\tau _0}.
}
Using this in \no{s.11}, \no{s.12} together with the fact that $t_k(E_{-+}^\delta )
\le 2\tau_0$, we get
\ekv{s.16}
{
\frac{t_k(E^\delta _{-+})}{8}\le t_k(P_\delta )\le t_k(E^\delta _{-+}).
}

\begin{remark}\label{gr0} \rm under suitable assumptions, the 
preceding discussion can be extended to the case of unbounded
operators. The purpose of this remark is to make one such extension
that will be needed later. Let $P$, $m$, $P_{{\delta_0}}$ be as in
Section \ref{hs}, satisfying (\ref{hs.4}), \no{hs.6} with $\delta $ there equal to
$\delta _0$. We fix $z\in \Omega $ with $\Omega $, $\Sigma $, $\Sigma
_\infty $ as in that section. For notational convenience, we may
assume that $z=0$. Then we know that $P_{\delta_0} :\, H(m)\to L^2({\bf
  R}^n)$ is Fredholm of index $0$, and the same holds for the formal
adjoint $P_{\delta_0} ^*$.

\par Let 
\ekv{gr.11}{
S_{\delta_0} =P_{\delta_0} ^*P_{\delta_0} ,\quad T_{\delta_0} =P_{\delta_0} P_{\delta_0} ^*
}
be the unbounded operators equipped with their natural domains,
\ekv{gr.12}
{
{\cal D}(S_{\delta_0} )=\{ u\in L^2;\, P_{\delta_0} u\in L^2,\ P_{\delta_0}
^*(P_{\delta_0} u)\in L^2\} =\{ u\in H(m);\, P_{\delta _0}u\in H(m)\}  ,
} 
and similarly for $T_{\delta_0} $. From Remark \ref{hs0} we know that
$S_{\delta _0}$ is self-adjoint and we clearly have the same fact for
$T_{\delta _0}$.

\par It is now easy to check that $S_{\delta_0} \ge 0$, $T_{\delta_0} \ge 0$
have discrete spectra in a fixed neighborhood of $0$, using that
$S_{\delta _0}-\widetilde{S}_{\delta _0}$ and $T_{\delta
  _0}-\widetilde{T}_{\delta _0}$ are compact, where
$\widetilde{S}_{\delta _0}$ and $\widetilde{T}_{\delta _0}$ are
defined as in \no{gr.11} with $P_{\delta _0}$ replaced by
$\widetilde{P}_{\delta _0}$. Moreover, 
$$
{\cal N}(S_{\delta_0} )=\{ u\in H(m);\, P_{\delta_0} u=0\},\ 
{\cal N}(T_{\delta_0} )=\{ u\in H(m);\, P^*_{\delta_0} u=0\} ,
$$
and since $P_{\delta_0} $, $P_{\delta_0} ^*$ are Fredholm of index 0, we
deduce that 
\ekv{gr.14}
{
\mathrm{dim\,}{\cal N}(S_{\delta_0} )=\mathrm{dim\,}{\cal N}(T_{\delta_0} ).}

\par Further, if $S_{\delta_0} u=\lambda u$, $\Vert u\Vert =1$, $0<\lambda
\ll 1$, then we can apply $P_{\delta_0} $ and write
\ekv{gr.15}
{
P_{\delta_0} P_{\delta_0} ^*(P_{\delta_0} u)=\lambda (P_{\delta_0} u).
}
Here $P_{\delta _0}u\in H(m)$ (the quadratic form domain of $T_{\delta
_0}$) and since the right hand side is (a fortiori) in $L^2$, we see
that $P_{\delta _0}u\in {\cal D}(T_{\delta _0})$
 and that $T_{\delta_0} (P_{\delta_0}
u)=\lambda (P_{\delta_0} u)$. Similarly, if $T_{\delta_0} v=\lambda v$, $\Vert
v\Vert =1$, $0<\lambda \ll 1$, we see that 
$P^*_{\delta_0} v\in {\cal D}(S_{\delta_0} )$ and that $S_{\delta_0} (P_{\delta_0}^*
v)=\lambda (P_{\delta_0}^* v)$.

\par It is then clear that if $0<\alpha \ll 1$, then $S_{\delta_0}
,T_{\delta_0} $ have the same eigenvalues in $[0,\alpha ]$, and if
these eigenvalues are denoted by $0\le t_1^2\le t_2^2\le ...\le
t_N^2\le \alpha $ with $t_j\ge 0$, then we can find orthonormal families of 
eigenfunctions, $e_1,e_2,...,e_N\in {\cal D}(S_{\delta_0} )$, 
$f_1,f_2,...,f_N\in {\cal D}(T_{\delta_0} )$, such that
\ekv{gr.16}
{
P_{\delta_0} e_j=t_jf_j,\quad P_{\delta_0} ^*f_j=t_je_j,
}
in analogy with \no{gr.1}

\par From this point on, the discussion from \no{gr.1} to \no{s.16}
goes through with only minor changes, with $P_0$ replaced by
$P_{\delta _0}$ and $P_\delta $ replaced by a new perturbation
$P_{\delta _0}+\delta Q_{\mathrm{new}}$. End of the remark.
\end{remark}

\par We next collect some facts from \cite{HaSj}. The first result
follows from Section 2 in that paper.
\begin{prop}\label{gr1}
Let $P:{\cal H}\to {\cal H}$ be bounded and assume that $P-1$ is of
trace class, so that $P$ is Fredholm of index $0$. Let $R_+,R_-,{\cal
  P}, {\cal E}={\cal P}^{-1}$ be as in \no{gr.01}, \no{gr.02}. Then
${\cal P}$ is also a trace class perturbation of the identity operator
and 
\ekv{grny.1}{\det P=\det {\cal P}\det E_{-+}.
} 
\end{prop}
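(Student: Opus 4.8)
The plan is to reduce the identity $\det P=\det{\cal P}\det E_{-+}$ to a purely algebraic factorization of the block operator ${\cal P}$, after first justifying that all the determinants involved are well defined in the Fredholm sense. First I would check that ${\cal P}-I$ is of trace class on ${\cal H}\times{\bf C}^N$: the diagonal block $P-1$ is trace class by hypothesis, the off-diagonal blocks $R_+:{\cal H}\to{\bf C}^N$ and $R_-:{\bf C}^N\to{\cal H}$ have finite rank (at most $N$), and the lower-right block is $-1$ acting on ${\bf C}^N$, hence a rank-$N$ perturbation of the identity on that factor. So $\det{\cal P}$ is a well-defined Fredholm determinant, and likewise $\det E_{-+}$ makes sense since $E_{-+}$ is an $N\times N$ matrix.

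The core of the argument is the factorization, valid whenever ${\cal P}$ is invertible with inverse ${\cal E}$ as in \no{gr.02}. One writes, on ${\cal H}\times{\bf C}^N$,
\ekv{gr.fact}
{
\left(\begin{array}{cc}P &R_-\\ R_+ &0\end{array}\right)
=\left(\begin{array}{cc}1 &-R_-E_{-+}^{-1}\\ 0 &1\end{array}\right)
\left(\begin{array}{cc}P-R_-E_{-+}^{-1}R_+ &0\\ 0 &E_{-+}\end{array}\right)
\left(\begin{array}{cc}1 &0\\ E_{-+}^{-1}R_+ &1\end{array}\right),
}
valid on the set where $E_{-+}$ is invertible, which by the first sentence after \no{gr.02} is exactly the set where $P$ is invertible; moreover on that set $P-R_-E_{-+}^{-1}R_+$ equals the operator whose inverse is $E$, i.e.\ it is $E^{-1}$, and in particular invertible. (This is just the Schur-complement reading of the relations \no{s.8}: from $E_{-+}^{-1}=-R_+P^{-1}R_-$ and $P^{-1}=E-E_+E_{-+}^{-1}E_-$ one recovers that the middle factor in \no{gr.fact} has upper-left block $E^{-1}$.) The two triangular factors are unipotent trace-class perturbations of the identity, so their Fredholm determinants are $1$; the middle factor is block diagonal, so its determinant is $\det(E^{-1})\det(E_{-+})$. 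Taking Fredholm determinants in \no{gr.fact} and using multiplicativity gives $\det{\cal P}=\det(E^{-1})\,\det E_{-+}$. On the other hand, taking determinants in $P^{-1}=E-E_+E_{-+}^{-1}E_-$, or more directly factoring ${\cal E}={\cal P}^{-1}$ the same way, yields $\det E=\det(P^{-1})\det(E_{-+}^{-1})$, i.e.\ $\det(E^{-1})=\det P\,/\,\det E_{-+}$ — equivalently one just inverts \no{gr.fact}. Substituting gives $\det{\cal P}=\det P$ when $E_{-+}$ (hence $P$) is invertible; but the identity to be proved is $\det P=\det{\cal P}\det E_{-+}$, so I should instead read off directly from \no{gr.fact} that $\det{\cal P}=1\cdot\det(E^{-1})\det(E_{-+})\cdot1$ is the wrong normalization — the cleaner route is to factor ${\cal P}$ the other way, extracting $P$ rather than its Schur complement: using the first relation of \no{s.8} in the form $E=P^{-1}-P^{-1}R_-E_{-+}^{-1}R_+P^{-1}$ is awkward, so instead one writes
\ekv{gr.fact2}
{
\left(\begin{array}{cc}P &R_-\\ R_+ &0\end{array}\right)
=\left(\begin{array}{cc}P &0\\ R_+ &1\end{array}\right)
\left(\begin{array}{cc}1 &P^{-1}R_-\\ 0 &-R_+P^{-1}R_-\end{array}\right),
}
valid when $P$ is invertible; the first factor is a trace-class perturbation of the identity with determinant $\det P$ (lower-triangular, diagonal blocks $P$ and $1$), and the second has determinant $\det(-R_+P^{-1}R_-)=\det E_{-+}$ by the second relation of \no{s.8}. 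This gives $\det{\cal P}=\det P\cdot\det E_{-+}$ on the invertible set, which is \no{grny.1}.

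Finally, the non-invertible case: both sides of \no{grny.1} are continuous in trace-class perturbations of ${\cal P}$ (equivalently, holomorphic in a complex parameter $w$ if one replaces $P$ by $P-w$), and $E_{-+}=E_{-+}(w)$ is a finite matrix depending holomorphically on $w$ with $\det E_{-+}(w)$ not identically zero (it vanishes exactly at the eigenvalues of $P$, a discrete set), so the identity, known on the complement of that discrete set, extends by continuity/analyticity to all of it. The main obstacle is not conceptual but bookkeeping: one must be careful that the factorizations \no{gr.fact2} are literally identities of trace-class perturbations of the identity on ${\cal H}\times{\bf C}^N$ so that the multiplicativity $\det(AB)=\det A\det B$ of Fredholm determinants applies, and that the finite-dimensional determinant of the $E_{-+}$-block is correctly identified with the Fredholm determinant of the corresponding block operator on ${\cal H}\times{\bf C}^N$ — both of which are standard but should be checked; this is presumably exactly the computation from \cite{SjZw} alluded to in the introduction.
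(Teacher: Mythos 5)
Your second block factorization is the right tool and the overall plan is sound, but there is a computational slip at the final step that makes your stated conclusion incorrect as written. In that factorization the second (upper-triangular) factor has Fredholm determinant equal to the ordinary determinant of its lower-right block, namely $\det(-R_+P^{-1}R_-)$; but the second relation in (\ref{s.8}) reads $E_{-+}^{-1}=-R_+P^{-1}R_-$, so $\det(-R_+P^{-1}R_-)=\det(E_{-+}^{-1})=(\det E_{-+})^{-1}$, \emph{not} $\det E_{-+}$. What the factorization therefore yields is $\det{\cal P}=\det P\cdot(\det E_{-+})^{-1}$, which is precisely (\ref{grny.1}) rewritten as $\det{\cal P}\,\det E_{-+}=\det P$. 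Your intermediate claim that ``$\det{\cal P}=\det P\cdot\det E_{-+}$'' is off by the factor $(\det E_{-+})^{2}$, and it also does not coincide with (\ref{grny.1}) as you assert; so as written the argument does not reach the target. The fix is just the one-line correction above, after which the proof is complete.

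Two smaller remarks. The first, Schur-type factorization you wrote down before abandoning it is not in fact a valid identity: multiplying out the three factors gives $P-2R_-E_{-+}^{-1}R_+$ in the upper-left block and $E_{-+}$ rather than $0$ in the lower-right block, so it does not reproduce ${\cal P}$. Since you discarded it this does not affect your argument, but the error is worth noting. Also, the passage to the case where $P$ is not invertible is cheaper than the analytic-continuation argument you sketch: by the equivalence recalled just after (\ref{gr.02}), $P$ is not invertible if and only if $E_{-+}$ is not invertible, so in that case $\det P=0=\det E_{-+}$ and (\ref{grny.1}) holds trivially; no holomorphy or continuity argument is needed. Finally, the paper itself does not give a proof of this proposition but cites Section 2 of \cite{HaSj}; the block-triangular factorization you use (once corrected) is essentially the standard argument underlying that reference.
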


\par Now consider the operator $P_z=P_{0,z}$ in \no{hs.8} 
for $z\in \Omega $, and
recall that $P_z$ is a trace class perturbation of the identity. Put
$s(x,\xi )=s_z(x,\xi )=| p_z(x,\xi )|^2$. Following
Section 4 in \cite{HaSj}, we introduce $V(t)=V_z(t)$ by
\ekv{grny.2}
{
V(t)=\iint_{s(x,\xi )\le t}dxd\xi ,\ 0\le t\le \frac{1}{2}.
}
For a given $z\in \Omega $, we assume that there exists $\kappa \in
]0,1]$, such that 
\ekv{grny.3}
{
V(t)={\cal O}(1)t^\kappa ,\ 0\le t\le \frac{1}{2}
}
(Later on we shall also assume that this condition holds uniformly
when $z$ varies in some subset of $\Omega $, and then all estimates
below will hold uniformly for $z$ in that subset.) Proposition 4.5 in
\cite{HaSj} and a subsequent remark there give
\begin{prop}\label{gr2}
Assume \no{grny.3}. For $0<h\ll \alpha \ll 1$, the number $N(\alpha )$
of eigenvalues of $P_z^*P_z$ in $[0,\alpha ]$ satisfies
\ekv{grny.4}
{
N(\alpha )={\cal O}(\alpha ^\kappa h^{-n}).
}
Moreover, 
\ekv{grny.5}
{
\ln \det P_z^*P_z\le \frac{1}{(2\pi h)^n}(\iint \ln (s)dxd\xi +
{\cal O}(\alpha ^\kappa \ln \frac{1}{\alpha })).
}
\end{prop}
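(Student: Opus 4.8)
The statement is Proposition~4.5 of \cite{HaSj} together with the remark following it, and the plan is essentially to invoke it; let me indicate the ideas. First I would record that $P_z^*P_z=1-L_{0,z}$ (cf.\ \no{hs.9} with $\delta=0$) is a trace class perturbation of the identity and is the $h$-Weyl quantization of a symbol in $S(1)$ whose semiclassical principal part is $s=s_z=|p_z|^2\ge0$ and which, by \no{hs.1} and \no{hs.2}, equals $1$ outside a fixed compact subset of $\R^{2n}$. In particular $\iint\ln s\,dxd\xi$ converges (near $s=0$ this uses \no{grny.3}), and one has a quantitative semiclassical trace formula $\mathrm{tr\,}g(P_z^*P_z)=(2\pi h)^{-n}\iint g(s)\,dxd\xi+(\hbox{error})$ for $g\in C^\infty$ vanishing near $1$ and with suitable derivative bounds; the whole point is to control the error as a function of $h$ and of the width of the interval on which $g$ varies.

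To get \no{grny.4}, I would choose $f\in C_0^\infty(]-\infty,2[)$ with $0\le f\le1$ and $f=1$ on $]-\infty,1]$, set $f_\alpha(t)=f(t/\alpha)$, and note that $1_{[0,\alpha]}\le f_\alpha\le1_{[0,2\alpha]}$ on $[0,+\infty[$, so that $N(\alpha)\le\mathrm{tr\,}f_\alpha(P_z^*P_z)$. The trace formula gives $\mathrm{tr\,}f_\alpha(P_z^*P_z)=(2\pi h)^{-n}\iint f_\alpha(s)\,dxd\xi+(\hbox{error})$; the leading term is $\le(2\pi h)^{-n}V(2\alpha)={\cal O}(\alpha^\kappa h^{-n})$ by \no{grny.3}. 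For the error one rescales $t\mapsto t/\alpha$, so that $P_z^*P_z$ is governed by the effective small parameter $h/\alpha\ll1$, decomposes $]0,2\alpha]$ dyadically, and estimates each piece against \no{grny.3}; the outcome is an error ${\cal O}(\alpha^{\kappa-1}h^{1-n})={\cal O}(\alpha^\kappa h^{-n})$. This yields \no{grny.4}.

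For \no{grny.5} (when $P_z$ is not invertible the left-hand side is $-\infty$ and there is nothing to prove, so assume it invertible), I would denote the eigenvalues of $P_z^*P_z$ by $\lambda_1\le\lambda_2\le\dots$ and split $\ln\det P_z^*P_z=\sum_{\lambda_j\le\alpha}\ln\lambda_j+\sum_{\lambda_j>\alpha}\ln\lambda_j$. Since $\lambda_j\le\alpha<1$ in the first sum, it is $\le N(\alpha)\ln\alpha\le0$. For the second sum introduce $g_\alpha(t)=\psi(t/\alpha)\ln t$ with $\psi\in C^\infty(\R)$, $\psi=0$ on $]-\infty,1]$, $\psi=1$ on $[2,+\infty[$; then $g_\alpha$ is smooth, vanishes on $]-\infty,\alpha]$, equals $\ln t$ for $t\ge2\alpha$ (in particular near $1$ since $2\alpha<1$), and $\sum_{\lambda_j>\alpha}\ln\lambda_j\le\mathrm{tr\,}g_\alpha(P_z^*P_z)$, the difference being supported on $\lambda_j\in]\alpha,2\alpha[$ and hence $\le N(2\alpha)\ln\frac1\alpha={\cal O}(\alpha^\kappa h^{-n}\ln\frac1\alpha)$ by \no{grny.4}. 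Applying the trace formula to $g_\alpha$ and, on the symbol side, using $\iint g_\alpha(s)\,dxd\xi\le\iint_{s>\alpha}\ln s\,dxd\xi+{\cal O}(\alpha^\kappa\ln\frac1\alpha)$ together with $-\iint_{s\le\alpha}\ln s\,dxd\xi=\int_0^\alpha\ln\frac1t\,dV(t)={\cal O}(\alpha^\kappa\ln\frac1\alpha)$ (integration by parts and \no{grny.3}), one gets $\mathrm{tr\,}g_\alpha(P_z^*P_z)\le(2\pi h)^{-n}(\iint\ln s\,dxd\xi+{\cal O}(\alpha^\kappa\ln\frac1\alpha))$, the trace-formula error being of the same order ${\cal O}((2\pi h)^{-n}\alpha^\kappa\ln\frac1\alpha)$ in the regime $h\ll\alpha\ll1$. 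Collecting the three contributions gives \no{grny.5}.

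The main obstacle is the quantitative semiclassical trace formula with the $\alpha$-dependent test functions $f_\alpha,g_\alpha$: one must show that rescaling $s\mapsto s/\alpha$ genuinely reduces matters to a calculus with small parameter $h/\alpha$, uniformly down to $\alpha$ comparable to $h$, control the symbolic composition and commutator errors there, and sum the dyadic contributions against the volume bound \no{grny.3}. This is precisely what Proposition~4.5 of \cite{HaSj} and the following remark supply; everything else above is the elementary bookkeeping that turns those two trace estimates into \no{grny.4} and \no{grny.5}.
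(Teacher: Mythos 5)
Your proposal is correct and takes the same route as the paper: the paper's entire proof of Proposition \ref{gr2} consists of citing Proposition 4.5 of \cite{HaSj} and the remark following it, which is exactly what you do. The extra sketch you append of how the cited trace estimates are actually obtained (rescaling, dyadic decomposition, the $\alpha$-dependent cut-offs $f_\alpha,g_\alpha$, the integration by parts against $V(t)$) is sound bookkeeping, but it is not part of the paper's argument, which stops at the citation.
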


We next consider $P_{\delta ,z}=(\widetilde{P}_\delta
-z)^{-1}(P_\delta -z)=1-K_{\delta ,z}$ with $P_\delta $,
$\widetilde{P}_\delta $ as in Section \ref{hs} and under the
assumptions \no{hs.4}, \no{hs.6}. Put 
$$
S_{\delta,z} =P^*_{\delta ,z}P_{\delta ,z}=1-K_{\delta ,z}-K_{\delta ,z}^*+K_{\delta ,z}^*K_{\delta ,z},
$$
where $K_{\delta ,z}$ is given by \no{hs.8}, so that
$$
\Vert K_{\delta ,z}\Vert \le {\cal O}(1),\ \Vert K_{\delta
  ,z}\Vert_{\mathrm{tr}}
\le \Vert (\widetilde{P}_\delta -z)^{-1}\Vert \Vert
\widetilde{P}-P\Vert_{\mathrm{tr}}\le {\cal O}(h^{-n}).
$$
Here $\Vert \cdot \Vert_{\mathrm{tr}}$ denotes the trace class norm,
and we refer for instance to \cite{DiSj} for the standard estimate on
the trace class norm of an $h$-pseudodifferential operator with
compactly supported symbol, that we used for the last estimate.

\par Write $\dot{K}_{\delta ,z}=\frac{\partial }{\partial \delta }K_{\delta
,z}$. Then 
$$
\dot {K}_{\delta ,z}=-(z-\widetilde{P}_\delta
)^{-1}Q(z-\widetilde{P}_\delta )^{-1}(\widetilde{P}-P),
$$
so 
$$
\Vert \dot{K}_{\delta ,z}\Vert \le {\cal O}(\Vert Q\Vert ),\quad \Vert \dot{K}_{\delta ,z}\Vert_{\mathrm{tr}} \le {\cal O}(\Vert Q\Vert h^{-n}).
$$
It follows that 
$$
\Vert \dot{S}_{\delta, z}\Vert \le {\cal O}(\Vert Q\Vert ),\quad \Vert
\dot{S}_{\delta, z}\Vert_{\mathrm{tr}} \le {\cal O}(\Vert Q\Vert h^{-n}).
$$

\par Let $N=N(\alpha ,\delta )$ denote the number of singular values
of $P_{\delta ,z}$ in the interval $[0,\sqrt{\alpha} [$ for $h\ll \alpha \ll
1$. Strengthen the assumption \no{hs.6} to 
\ekv{grny.5.5}
{
\delta \le {\cal O}(h).
}
Then $\Vert S_{\delta ,z}-S_{0,z} \Vert \le {\cal O}(h)$ and from
\no{grny.4} we get 
\ekv{grny.5.6}
{
N(\alpha ,\delta )={\cal O}(\alpha ^\kappa h^{-n}).
}
 Define 
$$
{\cal P}_\delta =\left(\begin{array}{ccc}P_{\delta ,z} &R_{-,\delta
    }\\ R_{+,\delta } &0
 \end{array}\right)
$$
as in \no{gr.1}--\no{gr.3}, so that ${\cal P}={\cal P}_0$. As in (5.10)
in \cite{HaSj} we have 
\ekv{grny.6}
{
|\det {\cal P}_\delta |^2=\alpha ^{-N}\det 1_\alpha (S_{\delta,z} ),\quad 2\ln |\det
{\cal P}_\delta |=\ln \det 1_\alpha (S_{\delta,z} )+N\ln
\frac{1}{\alpha }, 
}
where $1_\alpha (t)=\max (\alpha ,t)$, $t\ge 0$. (The different power of
$\alpha $ is due to the 
normalizing factor $\sqrt{\alpha }$, used in the definition of 
$R_\pm $ in \cite{HaSj}.)

\par For $0<\epsilon \ll 1 $, let $C^\infty (\overline{{\bf R}}_+)\ni
1_{\alpha ,\epsilon }\ge 1_\alpha $ be equal to $t$ outside a small
neighborhood of $t=0$ and converge to $1_\alpha $ uniformly when
$\epsilon \to 0$. For any fixed $\epsilon >0 $, we put $f(t)=1_{\alpha
,\epsilon }(t)$ for $t\ge 0$ and extend $f$ to ${\bf R}$ in
such a way that $f(t)=t+g(t)$, $g\in C_0^\infty ({\bf R})$.  Let
$\widetilde{f}(t)=t+\widetilde{g}(t)$ be an almost holomorphic
extension of $f$ with $\widetilde{g}\in C_0^\infty ({\bf C})$.  Then
we have the Cauchy-Riemann formula (see for instance \cite{DiSj} and
further references given there):
$$
f(S_{\delta,z} )=S_\delta -\frac{1}{\pi }\int (w-S_{\delta,z}
)^{-1}\overline{\partial }\widetilde{g}(w)L(dw).
$$  
From this we see that
$$
\frac{\partial }{\partial \delta }f(S_{\delta,z} )=\dot{S}_\delta
-\frac{1}{\pi } \int (w-S_{\delta,z} )^{-1}\dot{S}_{\delta ,z}
(w-S_{\delta,z} )^{-1}\overline{\partial }\widetilde{g}(w)L(dw).
$$ Now,
\begin{eqnarray*} &&\hskip -1truecm\frac{\partial }{\partial \delta
}\ln\det f(S_{\delta,z} )= \mathrm{tr\,} f(S_{\delta,z}
)^{-1}\frac{\partial }{\partial \delta }f(S_{\delta,z} )=\\ &&\hskip
-1truecm\mathrm{tr\,}(f(S_\delta )^{-1}\dot{S}_\delta ) -\frac{1}{\pi
}\int \mathrm{tr\,}(f(S_{\delta,z} )^{-1}(w-S_{\delta,z}
)^{-1}\dot{S}_{\delta ,z} (w-S_{\delta,z} )^{-1}) \overline{\partial
}\widetilde{g}(w)L(dw).
\end{eqnarray*}
Here $f(S_{\delta,z} )^{-1}$ and $(w-S_{\delta,z} )^{-1}$ commute, and using also the
cyclicity of the trace, we see that the last term is equal to
\begin{eqnarray*}
&&\mathrm{tr\,}(f(S_{\delta,z} )^{-1}\frac{(-1)}{\pi }\int (w-S_{\delta,z}
)^{-2}
\overline{\partial }_w\widetilde{g}(w) L(dw)\dot
{S}_{\delta ,z} )\\ &=&\mathrm{tr\,}(f(S_{\delta,z} )^{-1}\frac{(-1)}{\pi }\int
(w-S_{\delta,z} )^{-1} \overline{\partial }_w\partial
_w\widetilde{g}(w)L(dw)\dot{S}_{\delta ,z} )\\
&=&\mathrm{tr\,}(f(S_{\delta ,z})^{-1}g'(S_{\delta ,z})\dot{S}_{\delta
,z}),
\end{eqnarray*}
leading to the general identity
$$
\frac{\partial }{\partial \delta }\ln \det f(S_{\delta,z} )=\mathrm{tr\,}(
f(S_{\delta,z} )^{-1}f'(S_{\delta,z} )\dot {S}_{\delta ,z} ).
$$
Now we can choose $f=1_{\alpha,\epsilon }$ such that $|f'(t)|\le 1$
for $t\ge 0$. Then we get the estimate
\begin{eqnarray*}
\frac{\partial }{\partial \delta }\ln \det (1_{\alpha ,\epsilon }(S_{\delta,z}
))&=&\mathrm{tr\,}(1_{\alpha ,\epsilon }(S_{\delta,z} )^{-1}1'_{\alpha
  ,\epsilon }(S_{\delta,z} )\dot {S}_{\delta ,z} )\\
&=& {\cal O}(\frac{\Vert \dot{S}_{\delta ,z} \Vert _{\mathrm{tr}}}{\alpha
})\\
&=& {\cal O}(1)\frac{\Vert Q\Vert}{\alpha h^n}.
\end{eqnarray*}

\par Since $\ln \det 1_\alpha (S_{\delta,z} )=\lim_{\epsilon \to 0}\ln\det
1_{\alpha ,\epsilon }(S_{\delta,z} )$, we can integrate the above
estimate, pass to the limit and obtain
$$
\ln \det 1_\alpha (S_{\delta ,z})=\ln\det 1_{\alpha }(S_{0,z} )+{\cal
  O}(\frac{\delta \Vert Q\Vert}{\alpha h^n}).
$$
Using \no{grny.6}, \no{grny.5.6}, we get
\ekv{grny.7}
{
\ln |\det {\cal P}_\delta |^2= \ln |\det {\cal P}|^2+{\cal
  O}(\frac{\delta \Vert Q\Vert}{\alpha h^n}+\alpha ^\kappa
h^{-n}\ln\frac{1}{\alpha })
.
}
The estimate (5.13) in \cite{HaSj} is valid in our case:
\ekv{grny.8}
{
\ln |\det {\cal P}|=\frac{1}{(2\pi h)^n}(\iint \ln |p_z| dxd\xi +{\cal
  O} (\alpha ^\kappa \ln \frac{1}{\alpha })),
}
and using this in \no{grny.7},we get
\ekv{grny.9}{
\ln |\det {\cal P}_\delta |=\frac{1}{(2\pi h)^n}(\iint \ln |p_z| dxd\xi 
+{\cal O}(\alpha ^\kappa \ln \frac{1}{\alpha }+\frac{\delta }{\alpha
}\Vert Q\Vert )). 
}

\section{Singular values and determinants of certain
matrices associated to $\delta $ potentials}\label{inv} \setcounter{equation}{0} 
We start with a general
observation.
\begin{prop}\label{inv1} If $e_1(x),...,e_N(x)$ are linearly independent
continuous functions on an open domain $\Omega \subset {\bf R}^n $, 
then we can find $N$
different points $a_1,...,a_N\in\Omega $ so that $\overrightarrow{e}(a_1),...,\overrightarrow{e}(a_N)$
are linearly independent in ${\bf C}^N$, where
$$
\overrightarrow{e}(x)=\pmatrix{e_1(x)\cr e_2(x)\cr ..\cr ..\cr e_N(x)}.
$$
\end{prop}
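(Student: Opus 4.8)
The plan is to argue by induction on $k$, constructing points $a_1,\dots,a_k$ so that $\overrightarrow{e}(a_1),\dots,\overrightarrow{e}(a_k)$ are linearly independent in ${\bf C}^N$, and showing the construction never gets stuck until $k=N$. The base case $k=0$ (or $k=1$, using that the $e_j$ are not all identically zero) is trivial. For the inductive step, suppose $a_1,\dots,a_k$ with $k<N$ have been found. Consider the $k$-dimensional subspace $W=\mathrm{span}(\overrightarrow{e}(a_1),\dots,\overrightarrow{e}(a_k))\subset {\bf C}^N$. Since $k<N$, there is a nonzero linear functional $\lambda\in ({\bf C}^N)^*$, say $\lambda(v)=\sum_{j=1}^N c_j v_j$, that vanishes on $W$. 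I claim there exists $a_{k+1}\in\Omega$ with $\lambda(\overrightarrow{e}(a_{k+1}))\ne 0$; for such a point $\overrightarrow{e}(a_{k+1})\notin W$ (indeed $\overrightarrow{e}(a_{k+1})$ is not in any hyperplane containing $W$ on which $\lambda$ vanishes), and moreover $a_{k+1}\ne a_i$ for $i\le k$ since $\lambda(\overrightarrow{e}(a_i))=0$, so $\overrightarrow{e}(a_1),\dots,\overrightarrow{e}(a_{k+1})$ are linearly independent, completing the induction.

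It therefore remains to prove the claim. Suppose for contradiction that $\lambda(\overrightarrow{e}(x))=0$ for every $x\in\Omega$, i.e.\ $\sum_{j=1}^N c_j e_j(x)=0$ for all $x\in\Omega$. Since not all $c_j$ vanish, this is a nontrivial linear relation among $e_1,\dots,e_N$, contradicting their linear independence. Hence some $a_{k+1}\in\Omega$ works.

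There is essentially no serious obstacle here: the only point requiring a little care is to make sure the newly chosen point is \emph{distinct} from the previously chosen ones, which is handled automatically because the functional $\lambda$ vanishes on all the old vectors $\overrightarrow{e}(a_i)$ but not on $\overrightarrow{e}(a_{k+1})$. Continuity of the $e_j$ is not even needed for this argument; one only uses that a nontrivial linear combination of linearly independent functions cannot vanish identically on $\Omega$. (Continuity would be relevant if one wanted, for instance, the points $a_j$ to vary nicely or to lie in a prescribed open subset, but that is not asserted in the statement.)
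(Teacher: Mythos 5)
Your proof is correct and rests on the same key observation as the paper's: a nonzero linear functional $\lambda$ cannot annihilate $\overrightarrow{e}(x)$ for all $x\in\Omega$, since that would give a nontrivial linear relation among $e_1,\dots,e_N$. The paper packages this as ``the vectors $\overrightarrow{e}(x)$, $x\in\Omega$, span ${\bf C}^N$, hence contain a basis,'' while you unfold it into an explicit induction; the two are essentially the same argument.
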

\begin{proof} Let $E\subset {\bf C}^N$ be the linear subspace
spanned by all the $\overrightarrow{e}(x)$, $x\in \Omega $. We claim that
$E={\bf C}^N$. Indeed, if that were not the case, there would exist
$0\ne (\lambda _1,...,\lambda _N)\in{\bf C}^N$ such that
$$
0=\langle \lambda , \overrightarrow{e}(x)\rangle :=\sum_1^N\lambda _je_j(x),\quad
\forall x\in \Omega .
$$
But this means that $e_1,...,e_N$ are \emph{ linearly dependent }
functions in contradiction with the assumption, hence $E={\bf C}^N$ and
then we can find $a_1,..,a_N\in\Omega $ such that $\overrightarrow{e}(a_1),...,\overrightarrow{e}(a_N)$
form a basis in ${\bf C}^N$ and consequently so that they are linearly
independent.
\end{proof}
\begin{prop}\label{inv2} Let $e_1,...,e_N$ be as in Proposition
\ref{inv1} and let $f_1,...,f_N$ be a second family with the same
properties. Assume that we can find $a_1,...,a_N\in\Omega $ such that
both $\{ \overrightarrow{e}(a_1),...,\overrightarrow{e}(a_N)\}$ and $\{ \overrightarrow{f}(a_1),...,\overrightarrow{f}(a_N)\}$ are
linearly independent. (We notice that this holds in the special case
when $f_j=\overline{e}_j$.) Define $M={\bf C}^N\to {\bf C}^N$ by \ekv{inv.0} {
Mu=\sum_1^N (u|\overrightarrow{f}(a_\nu ))\overrightarrow{e}(a_\nu ),\
u\in{\bf C}^N,  }
where $(\cdot |\cdot \cdot )$ denotes the usual scalar product on
${\bf C}^N$.
 Then $M$ is
bijective.
\end{prop}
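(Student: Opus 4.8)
The plan is to reduce the statement to elementary linear algebra and prove that $M$ is injective; since $M$ is a linear endomorphism of the finite-dimensional space $\mathbb{C}^N$, injectivity will immediately give bijectivity. So I would start from $u\in\mathbb{C}^N$ with $Mu=0$, that is $\sum_{\nu=1}^N (u|\vec f(a_\nu))\,\vec e(a_\nu)=0$. By hypothesis the vectors $\vec e(a_1),\dots,\vec e(a_N)$ are linearly independent in $\mathbb{C}^N$, hence form a basis, so all coefficients must vanish: $(u|\vec f(a_\nu))=0$ for $\nu=1,\dots,N$. Since $\vec f(a_1),\dots,\vec f(a_N)$ are also linearly independent and therefore span $\mathbb{C}^N$, the vector $u$ is orthogonal to all of $\mathbb{C}^N$, whence $u=0$.

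The equivalent viewpoint I would actually record is a factorization of $M$ into invertible matrices. Let $A$ be the $N\times N$ matrix whose $\nu$-th column is $\vec e(a_\nu)$ and let $B$ be the one whose $\nu$-th column is $\vec f(a_\nu)$. With the Hermitian convention $(u|v)=\sum_j u_j\overline{v_j}$, the $\nu$-th component of $B^*u$ equals $(u|\vec f(a_\nu))$, so that $Mu=\sum_\nu (B^*u)_\nu\,\vec e(a_\nu)=AB^*u$, i.e.\ $M=AB^*$. The assumption on $a_1,\dots,a_N$ says precisely that the columns of $A$ and of $B$ are bases of $\mathbb{C}^N$, so $A$ and $B$ are invertible, hence so is $B^*$, and $M$ is a composition of invertible maps.

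There is essentially no obstacle here: once the points $a_1,\dots,a_N$ supplied by Proposition \ref{inv1} are in hand, everything is finite-dimensional linear algebra. The only point deserving a little care is the complex conjugation in the scalar product, which makes $B^*$ rather than $B^\top$ appear in the factorization; since conjugate transposition preserves invertibility this is immaterial. One should also keep in mind that Proposition \ref{inv1} only produces the two sets of points separately, whereas here a single common choice of $a_\nu$ is required; the genuinely useful instance, $f_j=\overline{e_j}$, is exactly the case in which one choice works for both families, as noted parenthetically in the statement.
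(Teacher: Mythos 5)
Your kernel argument is precisely the paper's proof of Proposition \ref{inv2}, and your factorization $M=AB^*$ is exactly the identity the paper records a few lines later as $M=E\circ F^*$ in (\ref{inv.8})--(\ref{inv.9}). So the proposal is correct and takes essentially the same route as the paper.
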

\begin{proof} Let $u\in{\bf C}^N$ belong to the kernel of $M$. Since
$\overrightarrow{e}(a_1),...,\overrightarrow{e}(a_N)$ form a basis in ${\bf C}^N$, we have $(u|\overrightarrow{f}(a_\nu ))=0$
for all $\nu $. Since $\overrightarrow{f}(a_1),...,\overrightarrow{f}(a_N)$ form a basis in ${\bf C}^N$, it
then follows that $u=0$.
\end{proof}
\begin{cor}\label{inv3} Under the assumptions of Proposition
\ref{inv2}, there exists $q\in C_0^\infty (\Omega ;{\bf R})$ such that
$M_q:{\bf C}^N\to{\bf C}^N$ is bijective, where \ekv{inv.0.5} { M_qu=\int
q(x)(u|\overrightarrow{f}(x))\overrightarrow{e}(x)dx.  }
\end{cor}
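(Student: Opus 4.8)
The plan is to obtain $q$ by mollifying the sum of point masses hidden in Proposition~\ref{inv2}. The key observation is that $q\mapsto M_q$ is linear, and in the standard basis of ${\bf C}^N$ its matrix is $(M_q)_{jk}=\int q(x)\,e_j(x)\overline{f_k(x)}\,dx$, whereas the operator $M$ of Proposition~\ref{inv2} (which is bijective by that proposition) has matrix $M_{jk}=\sum_{\nu=1}^N e_j(a_\nu)\overline{f_k(a_\nu)}$. Thus $M$ is exactly ``$M_\mu$'' for $\mu=\sum_\nu\delta_{a_\nu}$ and $M_q$ is its smoothed version. Since invertibility of an $N\times N$ matrix is an open condition (the determinant is continuous, and $\det M\ne0$ because $M$ is bijective), it suffices to exhibit $q\in C_0^\infty(\Omega;{\bf R})$ for which $M_q$ is close to $M$.

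First I would fix $r>0$ so small that the closed balls $\overline{B(a_\nu,r)}$, $\nu=1,\dots,N$, are pairwise disjoint and contained in the open set $\Omega$; this is possible since $a_1,\dots,a_N$ are distinct points of $\Omega$. Then I pick $0\le\chi_\nu\in C_0^\infty(B(a_\nu,r))$ with $\int\chi_\nu\,dx=1$ and set $q=q_r:=\sum_{\nu=1}^N\chi_\nu$, which lies in $C_0^\infty(\Omega;{\bf R})$ and in particular is real, as the statement requires. Its matrix is $(M_{q_r})_{jk}=\sum_{\nu=1}^N\int\chi_\nu(x)e_j(x)\overline{f_k(x)}\,dx$. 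Letting $r\to0$, each such integral converges to $e_j(a_\nu)\overline{f_k(a_\nu)}$, because $e_j\overline{f_k}$ is continuous on $\Omega$ and $\chi_\nu\,dx$ is a probability density supported in the ball $B(a_\nu,r)$ shrinking to $a_\nu$. Hence $M_{q_r}\to M$ entrywise as $r\to0$, so $\det M_{q_r}\ne0$ once $r$ is small enough; taking $q=q_r$ for such an $r$ completes the proof.

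I do not foresee any genuine difficulty: the corollary is an immediate approximate-identity argument layered on top of the linear-algebra fact already established in Proposition~\ref{inv2}, and the only point deserving an explicit line is the entrywise convergence $M_{q_r}\to M$, which is the standard fact that integrating a continuous function against a probability density supported in a shrinking ball recovers its value at the center point. (One could alternatively phrase this abstractly, via weak-$*$ density of $C_0^\infty(\Omega;{\bf R})\,dx$ in the compactly supported measures on $\Omega$ together with the openness of the invertibility condition, but the explicit mollification is the most economical route.)
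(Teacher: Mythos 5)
Your argument is correct and is exactly the approach the paper takes: the paper's one-line proof says to let $q$ be very close to $\sum_1^N \delta(x-a_j)$ in the weak measure sense, and your mollification argument simply makes that precise.
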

\begin{proof} It suffices to let $q(x)$ be very close to
$\sum_1^N\delta (x-a_j)$ in the weak measure sense.
\end{proof}

We observe that $M$ has the matrix
\ekv{inv.0.7}
{M_{j,k}=\sum_{\nu =1}^N e_j(a_\nu )\overline{f}_k(a_\nu )}
and that $M_q$ has the matrix
$$M_{q,j,k}=
\int q(x)e_j(x)\overline{f}_k(x)dx.
$$

\par We now look for quantitative versions of the preceding results. 
\begin{lemma}
\label{inv4} Let $e_1,...,e_N$ be as in Proposition \ref{inv1} and
also square integrable.
Let $L\subset {\bf C}^N$ be a linear subspace of dimension $M-1$,
for some $1\le M\le N$. Then there exists $x\in \Omega $ such that 
\ekv{inv.3} { \mathrm{dist\,}(\overrightarrow{e}(x), L)^2 \ge
\frac{1}{\mathrm{vol\,}(\Omega )}\mathrm{tr\,}((1-\pi _L){\cal
  E}_\Omega ) ,}
where ${\cal E}_\Omega =((e_j|e_k)_{L^2(\Omega )})_{1\le j,k\le N}$
and $\pi _L$ is the orthogonal projection from ${\bf C}^N$ onto $L$.
\end{lemma}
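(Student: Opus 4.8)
The plan is to prove the existence of such an $x$ by an averaging argument over $\Omega$. For a fixed point $x\in\Omega$, we have $\mathrm{dist\,}(\overrightarrow{e}(x),L)^2=|(1-\pi_L)\overrightarrow{e}(x)|_{{\bf C}^N}^2$. The natural first step is to integrate this quantity over $\Omega$ with respect to Lebesgue measure and compute
$$
\int_\Omega |(1-\pi_L)\overrightarrow{e}(x)|^2\,dx=\int_\Omega \big((1-\pi_L)\overrightarrow{e}(x)\,\big|\,(1-\pi_L)\overrightarrow{e}(x)\big)\,dx=\int_\Omega \big((1-\pi_L)\overrightarrow{e}(x)\,\big|\,\overrightarrow{e}(x)\big)\,dx,
$$
using that $1-\pi_L$ is a self-adjoint projection. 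Expanding the inner product in coordinates, $\big((1-\pi_L)\overrightarrow{e}(x)\,|\,\overrightarrow{e}(x)\big)=\sum_{j,k}(1-\pi_L)_{k,j}\,e_j(x)\overline{e_k(x)}$, and integrating term by term gives $\sum_{j,k}(1-\pi_L)_{k,j}(e_j|e_k)_{L^2(\Omega)}=\mathrm{tr\,}\big((1-\pi_L){\cal E}_\Omega\big)$, by definition of ${\cal E}_\Omega$ and of the trace (noting $1-\pi_L$ is self-adjoint so the order of factors under the trace is immaterial).

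The second step is the elementary observation that if a nonnegative continuous (or merely integrable) function has a given integral over $\Omega$, then its supremum over $\Omega$ is at least the average value, i.e. at least $\frac{1}{\mathrm{vol\,}(\Omega)}$ times the integral. Applying this to $x\mapsto \mathrm{dist\,}(\overrightarrow{e}(x),L)^2$ yields a point $x\in\Omega$ with
$$
\mathrm{dist\,}(\overrightarrow{e}(x),L)^2\ge \frac{1}{\mathrm{vol\,}(\Omega)}\int_\Omega \mathrm{dist\,}(\overrightarrow{e}(x'),L)^2\,dx'=\frac{1}{\mathrm{vol\,}(\Omega)}\mathrm{tr\,}\big((1-\pi_L){\cal E}_\Omega\big),
$$
which is exactly \no{inv.3}. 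One should remark that the hypotheses (continuity, square-integrability, linear independence) guarantee the integrand is a well-defined nonnegative function with finite integral, so the averaging argument is legitimate; strictly one also wants $\mathrm{vol\,}(\Omega)$ finite, or else the inequality is trivially satisfied in a limiting sense, but in the intended application $\Omega$ is bounded.

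I do not expect a serious obstacle here: the statement is essentially a first-moment computation followed by pigeonhole. The only point requiring a little care is bookkeeping in the trace identity — making sure the projection acts on the correct side and that conjugations in $(e_j|e_k)_{L^2(\Omega)}$ match the convention used in ${\cal E}_\Omega$ — but this is routine. If anything is delicate, it is purely that the bound is only useful when the right-hand side is positive, i.e. when $L$ does not contain the span of all $\overrightarrow{e}(x)$; the linear independence of $e_1,\dots,e_N$ (via Proposition \ref{inv1}, the span is all of ${\bf C}^N$) ensures ${\cal E}_\Omega>0$, hence $\mathrm{tr\,}((1-\pi_L){\cal E}_\Omega)>0$ whenever $\dim L<N$, so the estimate is genuinely informative.
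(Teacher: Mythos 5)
Your proof is correct and is essentially the same argument as in the paper: compute the integral of $\mathrm{dist}(\overrightarrow{e}(x),L)^2$ over $\Omega$, identify it as $\mathrm{tr\,}((1-\pi_L)\mathcal{E}_\Omega)$, and apply the pigeonhole/first-moment observation that the supremum must be at least the average. The only cosmetic difference is that the paper passes to an orthonormal basis $\nu_1,\dots,\nu_N$ adapted to $L$ before integrating, whereas you work directly with the matrix entries of $1-\pi_L$ in the standard basis; both are routine bookkeeping and give the same identity.
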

\begin{proof} 
Let $\nu _1,...,\nu _N$ be an orthonormal basis in ${\bf C}^N$ such that
$L$ is spanned by $\nu _1,...,\nu _{M-1}$ (and equal to $0$ when
$M=1$). Let $(\cdot |\cdot\cdot )$ denote the usual scalar product on
${\bf C}^N$ and let $(\cdot |\cdot\cdot )_\Omega $ be the scalar product on
$L^2(\Omega )$. Write 
$$\nu _\ell=\left(\begin{array}{ccc}\nu _{1,\ell}\\ ..\\ ..\\ \nu_{N,\ell} 
\end{array}\right) .$$
We have 
\begin{eqnarray*}
\mathrm{dist\,}(\overrightarrow{e}(x),L)^2&=&\sum_{\ell=M}^N|(\overrightarrow{e}(x)|\nu _\ell )|^2\\
&=&\sum_{\ell=M}^N |\sum_j e_j(x)\overline{\nu }_{j,\ell}|^2\\
&=&\sum_{\ell=M}^N \sum_{j,k}\overline{\nu }_{j,\ell}
e_j(x)\overline{e}_k(x)\nu _{k,\ell}.
\end{eqnarray*} 
It follows that 
$$
\int_\Omega \mathrm{dist\,}(\overrightarrow{e}(x),L)^2dx=\sum_{\ell =M}^N ({\cal
  E}_\Omega \nu _\ell|\nu _\ell )= \mathrm{tr\,}((1-\pi _L){\cal
  E}_\Omega ). 
$$
It then suffices to estimate the integral from above by
$$\mathrm{vol\,}(\Omega ) \sup_{x\in \Omega }\mathrm{dist\,}
(\overrightarrow{e}(x),L)^2.$$ 
If $\mathrm{dist\,}(\overrightarrow{e}(x),L)^2$ is constant, then any
$x\in \Omega $ will satisfy (\ref{inv.3}), if not,
$$\mathrm{tr\,}((1-\pi _L){\cal E}_\Omega )<\mathrm{vol\,}(\Omega
)\sup_\Omega \mathrm{dist\,}(\overrightarrow{e}(x),L)^2$$ and we can
find an $x\in \Omega $ satisfying (\ref{inv.3}).
\end{proof}

\par If we make the assumption that
 \ekv{inv.1} { e_1,...,e_N \mbox{ is an orthonormal family in }
L^2(\Omega ), }
then ${\cal E}_\Omega =1$ and \no{inv.3} simplifies to 
\ekv{inv.3.5}
{
\max_{x\in \Omega }\mathrm{dist\,}(\overrightarrow{e}(x),L)^2\ge
\frac{N-M+1}{\mathrm{vol\,}(\Omega )}.
}

In the general case, let $0\le\varepsilon _1\le \varepsilon _2\le ...\le
\varepsilon _N$ denote the eigenvalues of ${\cal E}_\Omega $. Then we
have 
\ekv{inv.1.6}
{
\inf_{\mathrm{dim\,}L=M-1}\mathrm{tr\,}((1-\pi _L){\cal E}_\Omega )=
\varepsilon _1+\varepsilon _2+...+\varepsilon _{N-M+1}=:E_M.
}
Indeed, the min-max principle shows that 
$$
\varepsilon _k=\inf_{\mathrm{dim\,}L'=k}\sup_{\nu \in L'\atop \| \nu
  \|=1} ({\cal E}_\Omega \nu |\nu ),
$$
so for a general subspace $L$ of dimension $M-1$, the eigenvalues of
$(1-\pi _L){\cal E}_\Omega (1-\pi _L)$ are $\varepsilon '_1\le ...\le
\varepsilon _{N-M+1}'$, with $\varepsilon '_j\ge \varepsilon _j$. 

Now, we can use the lemma to choose successively $a_1,...,a_N\in \Omega
$ such that
\begin{eqnarray*} \| \overrightarrow{e}(a_1)\Vert^2&\ge& {E_1\over {\rm vol\,}(\Omega
)},\\ {\rm dist\,}(\overrightarrow{e}(a_2),{\bf C} \overrightarrow{e}(a_1))^2&\ge& {E_2\over {\rm
vol\,}(\Omega )},\\ &...&\\ {\rm dist\,}(\overrightarrow{e}(a_M),{\bf C} \overrightarrow{e}(a_1)\oplus
...\oplus{\bf C} \overrightarrow{e}(a_{M-1}))^2 &\ge& {E_M\over {\rm vol\,}(\Omega )},\\
&...&
\end{eqnarray*}
\par Let $\nu _1,\nu _2,...,\nu _N$ be the Gram-Schmidt
orthonormalization of the basis $\overrightarrow{e}(a_1), \overrightarrow{e}(a_2),..., \overrightarrow{e}(a_N)$, so that
\ekv{inv.4} {\overrightarrow{e}(a_M)\equiv c_M \nu _M {\rm mod\,}(\nu
_1,...,\nu_{M-1}), \mbox{ where } | c_M | \ge \left(\frac{E_M}{{\rm
vol\,}(\Omega )}\right)^{1\over 2}.}

\par Consider the $N\times N$ matrix $E=(\overrightarrow{e}(a_1)\, \overrightarrow{e}(a_2)\, ...\,
\overrightarrow{e}(a_N))$ where $\overrightarrow{e}(a_j)$ are viewed
as columns. Expressing these vectors
in the basis $\nu _1,...,\nu _N$ will not change the absolute value of
the determinant and $E$ now becomes an upper triangular matrix with
diagonal entries $c_1,...,c_N$. Hence \ekv{inv.6} {| \det E| = |
c_1\cdot ...\cdot c_N| , } and \no{inv.4} implies that 
\ekv{inv.7} {
|\det E| \ge {(E_1E_2...E_N)^{1/2}\over ({\rm vol\,}(\Omega ))^{N/2}}.  }

We now return to $M$ in \no{inv.0}, (\ref{inv.0.7}) and observe that 
\ekv{inv.8} {
M=E\circ F^*, } where 
\ekv{inv.9} { F=(\overrightarrow{f}(a_1)...\overrightarrow{f}(a_N)).  }
Now, we assume
\ekv{inv.2} { f_j=\overline{e}_j,\ \forall j.  }
Then $F^*=\trans{E}$, so
\ekv{inv.10} { M=E\circ \trans{E}.  }  
We get from \no{inv.7}, \no{inv.10}, that 
\ekv{inv.10.5}
{
|
\det M | \ge {E_1E_2...E_N\over {\rm vol\,}(\Omega )^N}.
}
Under the assumption \no{inv.1}, this simplifies to
\ekv{inv.11} { |
\det M | \ge {N!\over {\rm vol\,}(\Omega )^N}.  }

\par It will also be useful to estimate the singular values $s_1(M)\ge
s_2(M)\ge ...\ge s_N(M)$ of the matrix $M$ (by definition the
decreasing sequence of eigenvalues of the matrix
$(M^*M)^{\frac{1}{2}}$). 
Clearly, 
\ekv{inv.11.1}
{
s_1^N\ge s_1^{k-1}s_k^{N-k+1}\ge \prod _1^Ns_j=|\det M|,\quad 1\le k\le N,
}
and we recall that 
\ekv{inv.11.2}{s_1=\Vert M\Vert .}

\par Combining \no{inv.10.5} and \no{inv.11.1}, we get
\begin{prop}\label{inv5} Under the above assumptions,
\ekv{inv.11.2.5}{
s_1\ge \frac{(E_1...E_N)^{\frac{1}{N}}}{\mathrm{vol\,}(\Omega )},
}
\ekv{inv.11.3}{s_k\ge s_1\left(\prod_1^N\left(\frac{E_j}{s_1\mathrm{vol\,}(\Omega )}\right)
\right)^{\frac{1}{N-k+1}}.}
\end{prop}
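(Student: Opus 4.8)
The plan is to read off both bounds directly from the two displays that immediately precede the statement — the determinant lower bound \no{inv.10.5} and the spectral-spread inequalities \no{inv.11.1} — together with the trivial remark that each $E_j\ge 0$: if some $E_j$ vanishes, both right-hand sides are $0$ and there is nothing to prove, so I may assume all $E_j>0$.

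For \no{inv.11.2.5}, the outer two members of \no{inv.11.1} give $s_1^N\ge |\det M|$, while \no{inv.10.5} gives $|\det M|\ge E_1E_2\cdots E_N/\mathrm{vol\,}(\Omega)^N$. Combining these and taking $N$-th roots yields $s_1\ge (E_1\cdots E_N)^{1/N}/\mathrm{vol\,}(\Omega)$, which is \no{inv.11.2.5}.

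For \no{inv.11.3}, I would instead use the middle member of \no{inv.11.1}, namely $s_1^{k-1}s_k^{N-k+1}\ge \prod_1^N s_j=|\det M|$. Inserting \no{inv.10.5} and dividing by $s_1^{k-1}$ gives
\begin{equation}
s_k^{N-k+1}\ge \frac{E_1E_2\cdots E_N}{s_1^{k-1}\,\mathrm{vol\,}(\Omega)^N},
\end{equation}
and it only remains to observe that raising $s_1\Bigl(\prod_1^N \frac{E_j}{s_1\mathrm{vol\,}(\Omega)}\Bigr)^{1/(N-k+1)}$ to the power $N-k+1$ reproduces precisely the right-hand side: indeed $s_1^{N-k+1}\prod_1^N \frac{E_j}{s_1\mathrm{vol\,}(\Omega)}=\frac{E_1\cdots E_N}{s_1^{k-1}\,\mathrm{vol\,}(\Omega)^N}$. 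Taking $(N-k+1)$-th roots then gives \no{inv.11.3}.

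There is no genuine obstacle here: once \no{inv.10.5} (which itself comes from the Gram--Schmidt triangularization behind \no{inv.6}--\no{inv.7}) and \no{inv.11.1} are in hand, the argument is a purely algebraic bookkeeping step, and the only subtlety — the degenerate case $E_j=0$ — is disposed of at the outset.
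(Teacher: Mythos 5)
Your proof is correct and is exactly the paper's own argument: the paper states Proposition \ref{inv5} immediately after the line ``Combining \no{inv.10.5} and \no{inv.11.1}, we get,'' leaving the same algebraic bookkeeping you spell out. The degenerate case $E_j=0$ you flag is harmless and the paper doesn't bother with it either.
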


\section{Singular values of matrices associated to suitable 
admissible potentials}\label{cl}
\setcounter{equation}{0}
In this section, we let $P,\widetilde{P},p,\widetilde{p}$ be as in the
introduction. (The assumption (\ref{int.6.2}) will not be used here.)
We also choose $\chi _0(x)$, $\epsilon _k$, $\mu _k$, $D=D(h)$,
$L=L(h)$ as in and around (\ref{int.6.3}), (\ref{int.6.4}).
\begin{dref}\label{spe01}
An admissible potential is a potential of the form
\ekv{cl.1}
{
q(x)=\chi _0(x)\sum_{0<\mu _k\le L}\alpha _k\epsilon _k(x),\ \alpha
\in {\bf C}^D.
}
\end{dref}
Here we shall take another step in the construction of an admissible
potential $q$ for which the singular values of $P+\delta h^{N_1}q$ (cf
(\ref{int.6.4.5})) satisfy nice lower bounds. More precisely, we
shall approximate $\delta $-potentials in $H^{-s}$ with admissible
ones and then apply the results of the preceding two sections. Let us
start with the approximation. As in the introduction we let $s>n/2$,
$0<\epsilon <s-n/2$.
\begin{prop}\label{spe02}
Let $a\in\{ x\in {\bf R}^n;\, \chi _0(x)=1\}$. Then $\exists \alpha
\in {\bf C}^D$, $r\in H^{-s}$ such that 
\ekv{cl.7}
{
\delta _a(x)=\chi _0(x)\sum_{\mu _k\le L}\alpha _k\epsilon_k+\chi _0(x)r(x),
}
where
\ekv{cl.9}
{
\Vert \chi _0r\Vert_{H^{-s}}\le C_{s,\epsilon }
L^{-(s-\frac{n}{2}-\epsilon )}h^{-\frac{n}{2}},
}
\ekv{cl.12}
{
(\sum \vert \alpha _k\vert^2)^{\frac{1}{2}}
\le \langle L\rangle^{\frac{n}{2}+\epsilon} (\sum_{\mu _k\le L}\langle
\mu _k\rangle^{-2(\frac{n}{2}+\epsilon ) }
|\alpha _k|^2)^{\frac{1}{2}}
\le CL^{\frac{n}{2}+\epsilon }  h^{-\frac{n}{2}}.
}
\end{prop}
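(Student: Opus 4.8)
The plan is to expand the Dirac mass $\delta _a$ in the orthonormal eigenbasis $(\epsilon _k)$ of $h^2\widetilde{R}$, keep the part carried by the frequencies $\mu _k\le L$ as the admissible piece $\chi _0\sum_{\mu _k\le L}\alpha _k\epsilon _k$, and put the high frequency tail into $\chi _0r$. Since $\widetilde{R}$ lives on a compact manifold $\widetilde{\Omega}$ an open subset of which is diffeomorphic to a neighbourhood of $\mathrm{supp\,}\chi _0$, I would first transport $\chi _0$, the point $a$ and $\delta _a$ to $\widetilde{\Omega}$ (a smooth positive density appears in the transport of $\delta _a$ but is harmless); because $\chi _0\equiv 1$ near $a$ we then have $\delta _a=\chi _0\delta _a$ on $\widetilde{\Omega}$. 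Writing $\delta _a=\sum_k\beta _k\epsilon _k$ in $L^2(\widetilde{\Omega})$ and setting $\alpha _k:=\beta _k$ for $\mu _k\le L$ and $r:=\sum_{\mu _k>L}\beta _k\epsilon _k$, the identity \no{cl.7} is immediate.

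The one substantial input is a single a priori bound on $\delta _a$ in a negative order Sobolev space. By Proposition \ref{al1} (valid on $\widetilde{\Omega}$ by Remark \ref{al3}) and since $n/2+\epsilon >n/2$, one has $|u(a)|\le \Vert u\Vert _{L^\infty }\le Ch^{-n/2}\Vert u\Vert _{H^{n/2+\epsilon }}$ for all $u$, so by duality $\Vert \delta _a\Vert _{H^{-(n/2+\epsilon )}(\widetilde{\Omega})}\le Ch^{-n/2}$. Via Proposition \ref{al2}, whose norm equivalence is uniform in $h$, and using that $(1+h^2\widetilde{R})^{\sigma /2}\epsilon _k=\langle \mu _k\rangle ^\sigma \epsilon _k$ with $\langle \mu _k\rangle ^2=1+\mu _k^2$, this translates into the weighted $\ell ^2$ bound $\sum_k\langle \mu _k\rangle ^{-2(n/2+\epsilon )}|\beta _k|^2\le C^2h^{-n}$; likewise $\Vert v\Vert _{H^\sigma }^2\asymp \sum_k\langle \mu _k\rangle ^{2\sigma }|v_k|^2$ for every $\sigma \in {\bf R}$.

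Both estimates then follow by splitting this $\ell ^2$ sum at $\mu _k=L$. For \no{cl.12}: on the range $\mu _k\le L$ we have $1\le \langle L\rangle ^{2(n/2+\epsilon )}\langle \mu _k\rangle ^{-2(n/2+\epsilon )}$, hence $\sum_{\mu _k\le L}|\alpha _k|^2\le \langle L\rangle ^{2(n/2+\epsilon )}\sum_k\langle \mu _k\rangle ^{-2(n/2+\epsilon )}|\beta _k|^2$, and combining with the bound above, together with $\langle L\rangle \asymp L$ since $L=L(h)\to \infty $, yields the displayed chain of inequalities. For \no{cl.9}: multiplication by $\chi _0\in C_0^\infty $ is bounded on $H^{-s}$ uniformly in $h$ (dualize \no{al.2}, or invoke the calculus of Section \ref{app}), so it suffices to bound $\Vert r\Vert _{H^{-s}}$; on the tail $\mu _k>L$, using $s-\frac{n}{2}-\epsilon >0$, we factor $\langle \mu _k\rangle ^{-2s}=\langle \mu _k\rangle ^{-2(n/2+\epsilon )}\langle \mu _k\rangle ^{-2(s-n/2-\epsilon )}\le L^{-2(s-n/2-\epsilon )}\langle \mu _k\rangle ^{-2(n/2+\epsilon )}$, whence $\Vert r\Vert _{H^{-s}}^2\le CL^{-2(s-n/2-\epsilon )}h^{-n}$, which is \no{cl.9}.

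Once the negative order bound on $\delta _a$ is in hand the argument is bookkeeping, so I do not anticipate a genuine obstacle; the points requiring a little care are the $h$-uniformity of the two facts borrowed from Section \ref{al} — the norm equivalence in Proposition \ref{al2} and the $H^{-s}$-boundedness of multiplication by $\chi _0$ — and the passage of $\delta _a$ and $\chi _0$ to the manifold $\widetilde{\Omega}$.
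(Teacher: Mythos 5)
Your proof is correct and follows essentially the same route as the paper: expand $\delta_a$ in the eigenbasis of $h^2\widetilde{R}$, bound $\Vert\delta_a\Vert_{H^{-(n/2+\epsilon)}}$ by ${\cal O}(h^{-n/2})$, translate via Proposition \ref{al2} into a weighted $\ell^2$ bound on the coefficients, and split at $\mu_k=L$. The only cosmetic difference is that the paper obtains the key $h^{-n/2}$ bound directly from the Fourier-side computation $\Vert\langle h\xi\rangle^{-s}\Vert_{L^2}$, while you dualize the $L^\infty\hookrightarrow H^{n/2+\epsilon}$ embedding of Proposition \ref{al1} — the same calculation in disguise.
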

\begin{proof}
Observe first that if $\delta _a=\delta (x-a)$ for
some fixed $a\in {\bf R}^n$, and $s>\frac{n}{2}$ is fixed as in the 
introduction,
\ekv{cl.6.5}
{
\Vert \delta _a\Vert_{H^{-s}}={\cal O}(1)\Vert \langle h\xi 
\rangle^{-s}\Vert_{L^2}={\cal O}_s(1)h^{-\frac{n}{2}}.
}

\par In general, if $u\in H^{-s_1}(\widetilde{\Omega } )$, $s_1>\frac{n}{2}$,  
then Proposition \ref{al2} (where $s$ is arbitrary) shows that 
$$
u=\sum_1^\infty \alpha _k\epsilon_k,\quad \sum \langle \mu _k\rangle^{-2s_1}
|\alpha _k|^2\asymp \Vert u\Vert_{H^{-s_1}}^2.
$$
Thus, if $s>s_1$:
\ekv{cl.6.7}{u=\sum_{\mu _k\le L}\alpha _k\epsilon_k +r,}
where 
\ekv{cl.6.8}
{\Vert r \Vert_{H^{-s}}^2=\sum_{\mu _k>L}\langle \mu
_k\rangle^{-2s}|\alpha _k|^2\le C L^{-2(s-s_1)}\Vert u\Vert_{H^{-s_1}}^2,}
\ekv{cl.11}
{
(\sum_{\mu _k\le L} \vert \alpha _k\vert^2)^{\frac{1}{2}}
\le \langle L\rangle^s(\sum_{\mu _k\le L}\langle \mu _k\rangle^{-2s}
|\alpha _k|^2)^{\frac{1}{2}}\le
CL^s\Vert u\Vert_{H^{-s}}.
}
In particular, when $u=\delta _a$, $a\in K$, we can multiply
(\ref{cl.6.7}) with $\chi _0$ and we get the proposition with
$s_1=\epsilon +n/2$
\end{proof}

\par Let $P_\delta $ be as in \no{hs.3} and assume \no{hs.4}, (\ref{hs.6}).  Let 
${\cal R}(\pi _\alpha )={\bf C}e_1\oplus ...\oplus {\bf C}e_N$ be as
in one of the two cases of Proposition \ref{hs1}. By the mini-max
principle and standard spectral asymptotics (see \cite{DiSj}), we know
that $N={\cal O}(h^{-n})$ and if we want to use the assumption
(\ref{int.6.2}) we even have $N={\cal O}((\max (\alpha ,h))^\kappa
h^{-n})$ by Proposition \ref{gr2}. For the moment we shall only use
that $N$ is bounded by a negative power of $h$. Recall that we have \no{hs.13}, where 
$s>\frac{n}{2}$ is the fixed number appearing in \no{hs.4}. 

\par 
Let $V$ be a fixed neighborhood of the set $K$ in \no{hs.16},
which, as we have seen, can be assumed to be contained in any fixed
given neighborhood of $\overline{\pi _xp^{-1}(\Omega )}$, where 
$\Omega$ is the set in the 
introduction. Let 
$a=(a_1,...,a_N)\in V^N$ and put 
\ekv{spe.01}
{
q_a(x)=\sum_1^N \delta (x-a_j),
}
\ekv{spe.02}
{
M_{q_a;j,k}=\int q_a(x)e_k(x)e_j(x)dx,\ 1\le j,k\le N.
}
Then using \no{hs.13}, \no{al.2} and the fact that $\Vert
q_a\Vert_{H^{-s}}={\cal O}(1)Nh^{-n/2}$, we get for all $\lambda ,\mu \in
{\bf C}^n$,
\begin{eqnarray*}
\langle M_{q_a}\lambda ,\mu \rangle&=&\int q_a(x)(\sum \lambda
_ke_k)(\sum \mu _je_j)dx\\
&=& {\cal O}(1)Nh^{-n}\Vert \lambda \Vert \Vert \mu \Vert
\end{eqnarray*}
and hence
\ekv{spe.03}{s_1(M_{q_a})=\Vert M_{q_a}\Vert_{{\cal L}({\bf C}^N,{\bf
      C}^N)}={\cal O}(1)Nh^{-n}.}

\par We now choose $a$ so that (\ref{inv.11.2.5}), \no{inv.11.3} hold, where we recall
that $s_k$ is the $k$:th singular value of $M_{q_a}$ and $E_j$ is
defined in \no{inv.1.6}, where $0\le\varepsilon _1\le \varepsilon _2\le ...\le
\varepsilon _N$ are the eigenvalues of the Gramian ${\cal E}_V
=((e_j| e_k)_{L^2(V )})_{1\le j,k\le N}$. 

\par From Proposition \ref{hs1} we see that ${{e_j}_\vert}_{V
}$ almost form an orthonormal system in $L^2(V )$: ${\cal E}_V
=1+{\cal O}(h^\infty )$. Hence,
\ekv{spe.04}
{
E_j=N-j+1+{\cal O}(h^\infty ).
}
Then \no{inv.11.2.5} gives the lower bound
\ekv{spe.05}
{
s_1\ge \frac{(1+{\cal O}(h^\infty
  ))(N!)^{\frac{1}{N}}}{\mathrm{vol\,}(V )}
= 
(1+{\cal O}(\frac{\ln N}{N}))\frac{N}{e\,\mathrm{vol\,}(V )}
,
}
where the last identity follows from Stirling's formula.

\par Rewriting \no{inv.11.3} as
$$
s_k\ge s_1^{-\frac{k-1}{N-k+1}}\left( \prod_1^N
  \frac{E_j}{\mathrm{vol\,}(V )}\right)^{\frac{1}{N-k+1}},
$$
and using \no{spe.03}, we get
\ekv{spe.06}
{
s_k\ge \frac{(1+{\cal O}(h^\infty
  ))}{C^{\frac{k-1}{N-k+1}}(\mathrm{vol\,}(V ))^{\frac{N}{N-k+1}}}
\left( \frac{h^n}{N} \right)^{\frac{k-1}{N-k+1}}(N!)^{\frac{1}{N-k+1}}.
}

\par Summing up, we get
\begin{prop}\label{spe1} Let $V$ be a fixed neighborhood of the set
  $K$ in (\ref{hs.16}) (which can be assumed to be contained in any
  fixed given neighborhood of $\pi _x^{-1}(\overline{\Omega })$). 
We can find $a_1,...,a_N\in V $ such that if 
$q_a=\sum_1^N \delta (x-a_j)$ and $M_{q_a;j,k}=\int q_a(x)
e_k(x)e_j(x)dx$, then the singular values $s_1\ge s_2\ge ...\ge s_N$ of $M_{q_a}$,
satisfy \no{spe.03}, \no{spe.05} and \no{spe.06}.
\end{prop}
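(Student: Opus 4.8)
The plan is to reduce everything to the factorization $M_{q_a}=E\,{}^tE$ of \no{inv.8}--\no{inv.10}, where $E=(\overrightarrow{e}(a_1)\ \cdots\ \overrightarrow{e}(a_N))$ has the vectors $\overrightarrow{e}(a_\nu)$ as columns. Note that, taking $f_j=\overline{e}_j$ in \no{inv.0.7}, the matrix $M$ there is exactly $M_{q_a}$, since $q_a=\sum_1^N\delta(x-a_j)$. Thus it suffices to choose the points $a_1,\dots,a_N\in V$ so that the columns $\overrightarrow{e}(a_\nu)$ are quantitatively spread out in ${\bf C}^N$ --- which is precisely what Lemma \ref{inv4} is designed to do --- and to record separately the crude upper bound on $s_1(M_{q_a})=\Vert M_{q_a}\Vert$. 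The only extra input for the lower bounds is that the restrictions $e_j|_V$ form an almost orthonormal family in $L^2(V)$.

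First I would verify ${\cal E}_V:=\big((e_j|e_k)_{L^2(V)}\big)_{1\le j,k\le N}=1+{\cal O}(h^\infty)$. Pick $\chi\in C_b^\infty({\bf R}^n)$, independent of $h$, with $\chi\equiv1$ on ${\bf R}^n\setminus V$ and $\mathrm{supp\,}\chi\cap K=\emptyset$ (possible because $K$ is compact and $K\subset V$). Since $\pi_\alpha e_j=e_j$, Proposition \ref{hs1} gives $\chi e_j=\chi\pi_\alpha e_j={\cal O}(h^\infty)$ in $H^s$, hence in $L^2$ (as $\Vert\cdot\Vert_{L^2}\le\Vert\cdot\Vert_{H^s}$ for $s\ge0$). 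Because $\chi\equiv1$ off $V$, it follows that $|(e_j|e_k)_{L^2}-(e_j|e_k)_{L^2(V)}|\le\Vert\chi e_j\Vert_{L^2}\Vert\chi e_k\Vert_{L^2}={\cal O}(h^\infty)$, so ${\cal E}_V=1+{\cal O}(h^\infty)$ entrywise, and since $N={\cal O}(h^{-n})$ this persists in operator norm. Hence the eigenvalues of ${\cal E}_V$ are $\varepsilon_j=1+{\cal O}(h^\infty)$; in particular ${\cal E}_V$ is invertible, the $e_j|_V$ are linearly independent, and by \no{inv.1.6}, $E_M=\varepsilon_1+\cdots+\varepsilon_{N-M+1}=N-M+1+{\cal O}(h^\infty)$, which is \no{spe.04}.

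Next I would iterate Lemma \ref{inv4}. Having chosen $a_1,\dots,a_{M-1}\in V$ with $\overrightarrow{e}(a_1),\dots,\overrightarrow{e}(a_{M-1})$ linearly independent, I apply the lemma with $L={\bf C}\overrightarrow{e}(a_1)\oplus\cdots\oplus{\bf C}\overrightarrow{e}(a_{M-1})$ and \no{inv.1.6} to obtain $a_M\in V$ with $\mathrm{dist\,}(\overrightarrow{e}(a_M),L)^2\ge E_M/\mathrm{vol\,}(V)$; positivity of this distance keeps the enlarged family linearly independent. Expressing the columns of $E$ in their Gram--Schmidt basis makes $E$ upper triangular, without changing $|\det E|$, with diagonal entries $|c_M|\ge(E_M/\mathrm{vol\,}(V))^{1/2}$ by \no{inv.4}; hence $|\det E|\ge(E_1\cdots E_N)^{1/2}/\mathrm{vol\,}(V)^{N/2}$, and since $M_{q_a}=E\,{}^tE$ this is \no{inv.10.5}. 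Combined with the elementary inequality \no{inv.11.1}, this gives the lower bounds \no{inv.11.2.5}, \no{inv.11.3} of Proposition \ref{inv5}. For the upper bound \no{spe.03}, I would write $\langle M_{q_a}\lambda,\mu\rangle=\langle q_a,(\sum\lambda_ke_k)(\sum\mu_je_j)\rangle$, pair $H^{-s}$ against $H^s$, and use $\Vert q_a\Vert_{H^{-s}}={\cal O}(1)Nh^{-n/2}$ (from \no{cl.6.5} and the triangle inequality), the multiplication estimate \no{al.2} and \no{hs.13}, to get $\Vert M_{q_a}\Vert={\cal O}(1)Nh^{-n}$. Finally, substituting $E_j=N-j+1+{\cal O}(h^\infty)$ and this bound on $s_1$ into \no{inv.11.2.5}, \no{inv.11.3}, using $E_1\cdots E_N=N!(1+{\cal O}(h^\infty))$ and Stirling's formula $(N!)^{1/N}=\frac{N}{e}(1+{\cal O}(\frac{\ln N}{N}))$, yields \no{spe.05} and \no{spe.06}.

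The main obstacle is essentially the bookkeeping around the almost-orthonormality ${\cal E}_V=1+{\cal O}(h^\infty)$: one must check that the ${\cal O}(h^\infty)$ errors survive being multiplied $N$ times and then raised to a power $1/N$ or $1/(N-k+1)$, which is harmless precisely because $N$ is only polynomially large in $1/h$. Two minor points must also be kept straight: $V$ should be taken bounded, so that $\mathrm{vol\,}(V)<\infty$ and the $e_j|_V$ are square-integrable (as Lemma \ref{inv4} requires); and the hypothesis of Lemma \ref{inv4} that $e_1,\dots,e_N$ be linearly independent on $V$ must be available at each step, which follows from the invertibility of ${\cal E}_V$ established above.
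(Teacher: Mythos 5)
Your proof is correct and follows essentially the same route as the paper: you use the factorization $M_{q_a}=E\,{}^tE$, iterate Lemma \ref{inv4} to pick the points $a_j$, invoke Proposition \ref{hs1} to get ${\cal E}_V=1+{\cal O}(h^\infty)$, bound $s_1(M_{q_a})$ by the $H^{-s}$--$H^s$ duality together with \no{hs.13} and \no{al.2}, and then substitute into Proposition \ref{inv5} with Stirling's formula. You merely fill in a couple of details the paper leaves implicit (the construction of the cutoff $\chi$ and the observation that the ${\cal O}(h^\infty)$ errors survive products and roots because $N$ is only polynomial in $1/h$).
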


\par We shall next approximate $q_a$ with an admissible potential. Apply
Proposition \ref{spe02} to each $\delta $-function in $q_a$,
to see that 
\ekv{spe.4}
{
q_a=q+r,\ q=\chi _0(x)\sum_{\mu _k\le L}\alpha _k\epsilon _k,
}
where 
\ekv{spe.4.5}
{
\Vert q\Vert_{H^{-s}}\le Ch^{-\frac{n}{2}}N,
}
\ekv{spe.5}
{
\Vert r\Vert_{H^{-s}}\le C_\epsilon L^{-(s-\frac{n}{2}-\epsilon )}h^{-\frac{n}{2}}N,
}
\ekv{spe.6}
{
(\sum \vert \alpha _k\vert^2)^{\frac{1}{2}}\le
CL^{\frac{n}{2}+\epsilon }h^{-\frac{n}{2}}N.
}
Below, we shall have $N={\cal O}(h^{\kappa -n})$ so if we choose $L$
as in (\ref{int.6.4}), we get 
$$
|\alpha |_{{\bf C}^D}\le C h^{-(\frac{n}{2}+\epsilon )M+\kappa -\frac{3n}{2}}
$$
and $q$ becomes an admissible potential in the sense of
(\ref{int.6.3}), (\ref{int.6.4}).

\par In order to estimate $M_r$, we write
$$
\langle M_r \beta ,\gamma \rangle =\int r(x)(\sum \beta _ke_k)(\sum
\gamma _je_j) dx,
$$
so that
$$
\vert \langle M_r\beta ,\gamma \rangle\vert
\le 
C\Vert r\Vert_{H^{-s}}h^{-\frac{n}{2}}\Vert \sum \beta
_ke_k\Vert_{H^s} \Vert \sum \gamma _je_j\Vert_{H^s}.
$$
Applying \no{hs.13} to the last two factors, we get with a new
constant $C>0$:
$$
\vert \langle M_r\beta ,\gamma \rangle\vert
\le C\Vert r\Vert_{H^{-s}}h^{-\frac{n}{2}}\Vert \beta \Vert \Vert
\gamma \Vert,
$$
so
\ekv{spe.7}
{
\Vert M_r\Vert\le Ch^{-\frac{n}{2}}\Vert r\Vert_{H^{-s}}.
}
Using \no{spe.5}, we get for every $\epsilon >0$
\ekv{spe.8}
{
\Vert M_r\Vert\le C_\epsilon L^{-(s-\frac{n}{2}-\epsilon )}h^{-n}N.
}

\par For the admissible potential $q$ in \no{spe.4}, we thus obtain
from \no{spe.06}, \no{spe.8}:
\ekv{spe.9}
{
s_k (M_q)\ge  \frac{(1+{\cal O}(h^\infty
  ))}{C^{\frac{k-1}{N-k+1}}(\mathrm{vol\,}(V ))^{\frac{N}{N-k+1}}}
\left( \frac{h^n}{N} \right)^{\frac{k-1}{N-k+1}}(N!)^{\frac{1}{N-k+1}}-C_\epsilon L^{-(s-\frac{n}{2}-\epsilon )}h^{-n}N.
}

\par Similarly, from (\ref{spe.03}), (\ref{spe.8}) we get for $L\ge 1$:
\ekv{spe.10}
{
\Vert M_q\Vert \le CNh^{-n}.
}

\par Using Proposition \ref{al2}, we get for all $s_1>n/2$,
\begin{eqnarray*}
\Vert q\Vert_{H^s}&\le &{\cal O}(1) (\sum_{\mu _k\le L}\langle \mu
_k\rangle^{2s}|\alpha _k|^2)^{\frac{1}{2}}\\
&\le & {\cal O}(1) (\sum_{\mu _k\le L} \langle \mu
_k\rangle^{-2s_1}|\alpha _k|^2)^{\frac{1}{2}} L^{s+s_1}\\
&\le &{\cal O}(1) h^{-\frac{n}{2}}NL^{s+s_1},
\end{eqnarray*}
where we used \no{spe.4.5} or rather its proof in the last step. Thus
for every $\epsilon >0$,
\ekv{spe.11}
{
\Vert q\Vert_{H^s}\le {\cal O}(1)NL^{s+\frac{n}{2}+\epsilon }
h^{-\frac{n}{2}},\ \forall \epsilon >0.
}
Summing up, we have obtained
\begin{prop}\label{spe2}
Fix $s>n/2$ and $P_\delta $ as in \no{hs.3}, \no{hs.4}, (\ref{hs.6}) and let $\pi
_\alpha $, $e_1,...,e_N$  be as in one of the two cases in Proposition
\ref{spe02}. Let $V\Subset {\bf R}^n$ be a fixed open 
neighborhood of $K$ in \no{hs.16} and let $\chi _0\in C_0^\infty (V)$
be equal to 1 near $K$. Choose the $h$-dependent parameter
$L$ with $1\ll L\le {\cal O}(h^{-N_0})$ for some fixed $N_0>0$. Then
we can find an admissible potential $q$ as in \no{spe.4} (different
from the one in \no{hs.3}, \no{hs.4}) such that the matrix $M_q$, defined by
$$
M_{q;j,k}=\int q e_ke_j dx,
$$
satisfies \no{spe.9}, \no{spe.10}. Moreover the $H^s$-norm of $q$ 
satisfies \no{spe.11}.
\end{prop}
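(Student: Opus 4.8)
The statement is a recapitulation of the estimates built up in the discussion preceding it, so the plan is simply to assemble them in the right order. First I would record the input from Proposition \ref{hs1}: the orthonormal basis $e_1,\dots,e_N$ of ${\cal R}(\pi_\alpha)$ obeys the uniform bound \no{hs.13}, and since $\chi\pi_\alpha={\cal O}(h^\infty):H^{-s}\to H^s$ whenever $\mathrm{supp}\,\chi$ is disjoint from $K$, the restrictions $e_j|_V$ are almost orthonormal, i.e.\ the Gramian ${\cal E}_V=((e_j|e_k)_{L^2(V)})_{1\le j,k\le N}$ equals $1+{\cal O}(h^\infty)$. The mini--max principle and Weyl asymptotics give that $N$ is bounded by a fixed negative power of $h$ (and $N={\cal O}((\max(\alpha,h))^\kappa h^{-n})$ under \no{int.6.2}, by Proposition \ref{gr2}), which is all that is used below.

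Next I would apply Proposition \ref{inv5} to the functions $e_j$ on $V$: picking $a_1,\dots,a_N\in V$ successively via Lemma \ref{inv4} makes the matrix $M_{q_a}$ of $q_a=\sum_1^N\delta(\cdot-a_j)$ satisfy \no{inv.11.2.5} and \no{inv.11.3} with the $E_j$ of \no{inv.1.6}. From ${\cal E}_V=1+{\cal O}(h^\infty)$ we get $E_j=N-j+1+{\cal O}(h^\infty)$, and Stirling's formula converts \no{inv.11.2.5} into \no{spe.05}. For the operator norm I would estimate the bilinear form $\langle M_{q_a}\lambda,\mu\rangle=\int q_a(\sum\lambda_ke_k)(\sum\mu_je_j)\,dx$ using $\|q_a\|_{H^{-s}}={\cal O}(1)Nh^{-n/2}$, the multiplication inequality \no{al.2}, and \no{hs.13}, which gives \no{spe.03}; inserting \no{spe.03} into the rewritten form of \no{inv.11.3} yields \no{spe.06}. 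This is Proposition \ref{spe1}.

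Then I would pass from $q_a$ to an admissible potential by applying Proposition \ref{spe02} to each of the $N$ Dirac masses and summing, obtaining $q_a=q+r$ with $q=\chi_0\sum_{\mu_k\le L}\alpha_k\epsilon_k$ and the bounds \no{spe.4.5}, \no{spe.5}, \no{spe.6}; with $N={\cal O}(h^{\kappa-n})$ and $L$ in the range \no{int.6.4}, \no{spe.6} forces $|\alpha|_{{\bf C}^D}\le Ch^{-(\frac{n}{2}+\epsilon)M+\kappa-\frac{3n}{2}}$, so $q$ is admissible in the sense of \no{int.6.3}, \no{int.6.4}. The same bilinear-form estimate applied to $r$ gives \no{spe.7} and hence, via \no{spe.5}, the bound \no{spe.8} on $\|M_r\|$. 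Since $M_q=M_{q_a}-M_r$, the elementary inequality $s_k(M_q)\ge s_k(M_{q_a})-\|M_r\|$ (the case $k=1$ of \no{s.9}) together with \no{spe.06} and \no{spe.8} gives \no{spe.9}, and together with \no{spe.03} gives \no{spe.10}. Finally \no{spe.11} follows from Proposition \ref{al2} applied to $q$ and the $\ell^2$-bound on $(\alpha_k)$ extracted from the proof of \no{spe.4.5}, after optimizing the free exponent $s_1>n/2$.

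The only subtle point buried in this assembly is the matching of scales: the approximation error $\|M_r\|\le C_\epsilon L^{-(s-\frac{n}{2}-\epsilon)}h^{-n}N$ has to stay below the singular-value lower bound \no{spe.06}, which is precisely what dictates the lower bound on $L$ in \no{int.6.4}. I expect the real work to be the careful tracking of these $L$- and $N$-dependences so that \no{spe.9} remains a useful lower bound on the first $\sim N/2$ singular values (the regime where the exponent $\frac{k-1}{N-k+1}$ stays bounded), rather than any new conceptual ingredient.
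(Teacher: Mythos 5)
Your proposal reproduces the paper's proof of Proposition \ref{spe2} essentially step by step: almost-orthonormality of the $e_j|_V$ via Proposition \ref{hs1}, construction of $q_a=\sum\delta(\cdot-a_j)$ and the singular-value bounds for $M_{q_a}$ via Proposition \ref{inv5}, approximation $q_a=q+r$ via Proposition \ref{spe02}, the $\|M_r\|$ bound via the $H^{-s}\times H^s$ bilinear-form estimate, and the perturbation inequality $s_k(M_q)\ge s_k(M_{q_a})-\|M_r\|$, with \no{spe.11} coming from Proposition \ref{al2}. This is the same approach as the paper (the phrase ``optimizing $s_1$'' is a slight misnomer --- one simply takes $s_1=n/2+\epsilon$ as in Proposition \ref{spe02}), and the argument is correct.
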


Notice also that if we choose $\widetilde{R}$ with real coefficients,
then we can choose $q$ real-valued.

\section{Lower bounds on the small singular values for 
suitable perturbations}\label{sv}
\setcounter{equation}{0}

As before, we let 
\ekv{sv.0}{P\sim p+hp_1+... \in S(m),\ p,p_j\in S(m),
}
where $m\ge 1$ is an order function on ${\bf R}^{2n}$. 
We assume that $p-z$ is elliptic for at least one value of $z\in {\bf C}$
and define $\Sigma , \Sigma _\infty $ as in the introduction. Let
$\Omega \Subset {\bf C}$ be open, simply connected with $\Omega
\not\subset \Sigma $, $\overline{\Omega }\cap \Sigma _\infty
=\emptyset $.

\par In this section, we fix a $z\in \Omega $. We will use Proposition
\ref{spe2} iteratively to construct a special admissible perturbation
$P_\delta $ for which we have nice lower bounds on the small singular
values of $P_\delta -z$, that will lead to similar bounds for the ones
of $P_{\delta ,z}$ and to a lower bound on $|\det P_{\delta ,z}|$. 

\par We will need the symmetry assumption (\ref{int.6}). Recall that
$P$ also denotes the $h$-Weyl quantization of the symbol $P$. On the
operator level, (\ref{int.6}) is equivalent to the property
\ekv{sv.2}
{
P^*=\Gamma \circ P\circ\Gamma ,
}
where $\Gamma u=\overline{u}$ denotes the antilinear operator of
complex conjugation.
Notice that the equivalent conditions
\no{int.6}, \no{sv.2} remain unchanged if we add a multiplication
operator to $P$.

\par As in the introduction, we introduce
\ekv{sv.3}
{
V_z(t)=\mathrm{vol\,}(\{ \rho \in {\bf R}^{2n};\, |p(\rho )-z|^2\le
t\}),
}
and assume (for our fixed value of $z$) that 
\ekv{sv.4}
{
V_z(t)={\cal O}(t^\kappa
),\ 0\le t\ll 1,
}
for some $\kappa \in ]0,1]$.
It is easy to see that this assumption is equivalent to
\no{grny.3}. Moreover, from Proposition \ref{gr2} (or directly from 
\cite{HaSj}) it is easy to get,
\begin{prop}\label{sv1}
Assume \no{sv.4} (or  equivalently \no{grny.3}) and recall Remark
\ref{gr0}. For $0<h\ll \alpha \ll 1$, the number $N(\alpha )$ of
eigenvalues of $(P-z)^*(P-z)$ in $[0,\alpha ]$ satisfies
\ekv{sv.5}
{
N(\alpha )={\cal O}(\alpha ^\kappa h^{-n}).
}
\end{prop}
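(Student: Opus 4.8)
The plan is to deduce this from Proposition \ref{gr2} by comparing the small eigenvalues of $(P-z)^*(P-z)$ with those of $P_z^*P_z$, where $P_z=P_{0,z}=(\widetilde P-z)^{-1}(P-z)$ is the bounded operator of \no{hs.8}. First I would recall from Remark \ref{gr0} (with $\delta_0=0$) that $(P-z)^*(P-z)$, realized as its Friedrichs extension with form domain $H(m)$, is self-adjoint, nonnegative, and has discrete spectrum in a fixed neighborhood of $0$; writing $0\le t_1(P-z)^2\le t_2(P-z)^2\le\cdots$ for its eigenvalues counted from below, $N(\alpha)=\#\{k:\ t_k(P-z)^2\le\alpha\}$ is finite for $\alpha\ll1$ and is governed by the min--max principle,
$$
t_k(P-z)^2=\min_{V\subset H(m)\atop \dim V=k}\ \max_{u\in V\atop \|u\|=1}\ \|(P-z)u\|^2 .
$$
The same expression describes $t_k(P_z)^2$ when the competing $k$-dimensional subspaces range over all of $L^2$; restricting them to the dense subspace $H(m)$ can only raise the infimum, so
$$
t_k(P_z)^2\le\min_{V\subset H(m)\atop \dim V=k}\ \max_{u\in V\atop \|u\|=1}\ \|P_zu\|^2 .
$$

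Next I would use that $P-z=(\widetilde P-z)P_z$ on $H(m)$, i.e. $P_zu=(\widetilde P-z)^{-1}(P-z)u$, together with the fact recalled in Section \ref{hs} that $(\widetilde P-z)^{-1}={\cal O}(1):L^2\to L^2$ uniformly for $z\in\overline\Omega$. This gives $\|P_zu\|\le \|(\widetilde P-z)^{-1}\|\,\|(P-z)u\|$ for every $u\in H(m)$, hence, by the two displayed formulas, $t_k(P_z)^2\le C_0\,t_k(P-z)^2$ for all $k$, with $C_0:=\|(\widetilde P-z)^{-1}\|^2={\cal O}(1)$. Consequently $t_k(P-z)^2\le\alpha$ forces $t_k(P_z)^2\le C_0\alpha$, so $N(\alpha)$ is bounded by the number $N_{P_z}(C_0\alpha)$ of eigenvalues of $P_z^*P_z$ in $[0,C_0\alpha]$.

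Finally, since $C_0={\cal O}(1)$ the regime $h\ll\alpha\ll1$ is preserved as $h\ll C_0\alpha\ll1$, so Proposition \ref{gr2} applies and yields $N_{P_z}(C_0\alpha)={\cal O}((C_0\alpha)^\kappa h^{-n})={\cal O}(\alpha^\kappa h^{-n})$, which is the assertion. (Alternatively, as the statement indicates, one may invoke Proposition~4.5 of \cite{HaSj} directly.) I do not expect a genuine obstacle here: the only slightly delicate points are the identification of the self-adjoint realization of $(P-z)^*(P-z)$ and the existence of a spectral gap near $0$, both already supplied by Remark \ref{gr0}, and the harmless bookkeeping of the ${\cal O}(1)$ constant $C_0$ through the asymptotics.
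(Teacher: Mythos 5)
Your proof is correct and follows essentially the same route as the paper: both use the bound $\|(\widetilde P-z)^{-1}(P-z)e\|\le C^{1/2}\|(P-z)e\|$ together with the min--max principle to conclude that the number of eigenvalues of $(P-z)^*(P-z)$ in $[0,\alpha]$ is at most the number of eigenvalues of $P_z^*P_z$ in $[0,C\alpha]$, and then invoke Proposition~\ref{gr2}. The paper states this in one line, whereas you spell out the min--max comparison (including the harmless passage from form domain $H(m)$ to all of $L^2$); the substance is identical.
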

\begin{proof}
If $e\in {\cal D}(P)$ is normalized in $L^2$ and $\Vert (P-z)e\Vert
\le \alpha ^{\frac{1}{2}}$ then $\Vert
(\widetilde{P}-z)^{-1}(P-z)e\Vert \le (C\alpha )^{\frac{1}{2}}$ for
some constant $C>0$. By the minimax principle, it follows that the
number of eigenvalues of $(P-z)^*(P-z)$ in $[0,\alpha ]$ is smaller
than or equal to the number of eigenvalues of $P_z^*P_z$ in
$[0,C\alpha ]$, (where $P_z=(\widetilde{P}-z)^{-1}(P-z)$) and it
suffices to apply Proposition \ref{gr2}.
\end{proof}

\par Let $\epsilon >0$, $s>\frac{n}{2}+\epsilon $ be fixed as in the
introduction and consider
\ekv{sv.5.5}
{
P_0=P+\delta _0q_0,\hbox{ with }0\le \delta _0\ll h,\ \Vert
q_0\Vert _{H^s}\le h^{\frac{n}{2}}.}
 From the mini-max principle, we see that Proposition
\ref{sv1} still applies after replacing $P$ by $P_0$.

\par Choose $\tau_0\in
]0,(Ch)^\frac{1}{2}]$ and let $N={\cal O}(h^{\kappa -n})$ be the number
of singular values of $P_0-z$; $0\le t_1(P_0-z)\le ...\le t_N(P_0-z)<
\tau_0$ in the interval $[0,\tau_0[$. 
As in the introduction we put 
\ekv{sv.14b}
{
N_1=\widetilde{M}+sM+\frac{n}{2},
}
where $M,\widetilde{M}$ are the parameters 
in (\ref{int.6.4}).
Fix $\theta \in ]0,\frac{1}{4}[$ and recall that $N$ is determined by
the property $t_N(P_0-z)<\tau_0\le t_{N+1}(P_0-z)$. Fix $\epsilon
_0>0$.
\begin{prop}\label{sp1} a) If $q$ is an admissible potential as in
  (\ref{int.6.3}), (\ref{int.6.4}), we
  have 
\ekv{sv.14a}
{
\Vert q\Vert_\infty \le Ch^{-n/2}\Vert q\Vert_{H^s}\le \widetilde{C}h^{-N_1}.
}
\par\noindent b) If $N$ is sufficiently large, 
there exists an admissible potential $q$ as in (\ref{int.6.3}),
(\ref{int.6.4}), such that if
$$P_\delta =P_0+\frac{\delta h^{N_1}}{\widetilde{C}}q=:P_0+\delta Q,\
\delta =\frac{\tau_0}{C}h^{N_1+n}$$
(so that $\Vert Q\Vert \le 1$) then 
\ekv{sp.15}
{
t_\nu (P_\delta -z )\ge t_\nu (P_0-z)-\frac{\tau_0h^{N_1+n}}{C}\ge
(1-\frac{h^{N_1+n}}{C})t_\nu (P_0-z),\ \nu >N ,
}
\ekv{sp.16}
{
t_\nu (P_{\delta }-z)\ge \tau_0 h^{N_2},\ [N-\theta N] +1 \le \nu \le N.
}
Here, we put 
\ekv{sp.11}
{
N_2=2(N_1+n)+\epsilon _0,
}
and we let $[a]=\max ({\bf Z}\cap ]-\infty ,a])$ denote the integer
part of the real number $a$.
When $N={\cal O}(1)$, we have the same result provided that we replace
(\ref{sp.16}) by 
\ekv{sp.16a}
{
t_N (P_{\delta })\ge \tau_0 h^{N_2}.}
\end{prop}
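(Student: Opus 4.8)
The plan is to prove a) by a direct norm estimate and b) by combining the Grushin reduction of Remark~\ref{gr0} with the singular-value bounds of Proposition~\ref{spe2}, organised around a dichotomy.

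For a): Proposition~\ref{al1} gives $\Vert q\Vert_\infty\le Ch^{-n/2}\Vert q\Vert_{H^s}$, and for any admissible potential the computation behind (\ref{spe.11}) gives $\Vert q\Vert_{H^s}\le{\cal O}(1)NL^{s+n/2+\epsilon}h^{-n/2}$; inserting $N={\cal O}(h^{\kappa-n})$ and $L\le Ch^{-M}$ and comparing exponents with the definition (\ref{sv.14b}) of $N_1$ and the lower bound $\widetilde M\ge\frac{3n}{2}-\kappa+(\frac n2+\epsilon)M$ in (\ref{int.6.4}) yields $\Vert q\Vert_\infty\le\widetilde Ch^{-N_1}$.

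For b), fix $z\in\Omega$ and, as allowed by Remark~\ref{gr0}, set up the Grushin problem (\ref{gr.3}) for the index-$0$ Fredholm operator $P_0-z:H(m)\to L^2$ associated to its $N$ singular values $t_1\le\cdots\le t_N<\tau_0$, where $N={\cal O}(h^{\kappa-n})$ by Proposition~\ref{sv1}. Here the symmetry assumption (\ref{int.6})$\Leftrightarrow$(\ref{sv.2}) is essential: since $(P_0-z)^*=\Gamma(P_0-z)\Gamma$ it gives $(P_0-z)(P_0-z)^*=\Gamma\bigl((P_0-z)^*(P_0-z)\bigr)\Gamma$, so the right and left singular vectors can be chosen as $e_j$ and $f_j=\overline{e_j}=\Gamma e_j$ (after adjusting phases within eigenspaces). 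The effective Hamiltonian is then $E_{-+}^0=-\mathrm{diag}(t_j)$, and for the multiplicative perturbation the leading term of (\ref{gr.7}) is $\beta\tau_0M_q$ with $M_q$ the matrix $(\int q\,e_ke_j\,dx)$ of Proposition~\ref{spe2}. Now the dichotomy: if $t_{[N-\theta N]+1}\ge\tau_0h^{N_2}$ I take $q$ to be the zero admissible potential, so $P_\delta=P_0$ and (\ref{sp.15})--(\ref{sp.16}) are immediate (the $t_\nu(P_0-z)$ are nondecreasing); otherwise $t_{[N-\theta N]+1}<\tau_0h^{N_2}$ and I take for $q$ the special admissible potential of Proposition~\ref{spe2} (admissible in the sense of (\ref{int.6.3}), (\ref{int.6.4}) once $L$ is chosen as there), set $Q$ equal to multiplication by $h^{N_1}q/\widetilde C$ (so $\Vert Q\Vert\le1$ by a)) and $\delta=\tau_0h^{N_1+n}/C$. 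Then (\ref{gr.7}) and (\ref{s.15}) give
$$
E_{-+}^\delta=E_{-+}^0+\beta\tau_0M_q+{\cal R},\qquad \beta=\frac{h^{2N_1+n}}{C\widetilde C},\qquad \Vert{\cal R}\Vert={\cal O}\Bigl(\frac{\delta^2}{\tau_0}\Bigr)={\cal O}(\tau_0h^{2N_1+2n}).
$$
Estimate (\ref{sp.15}) follows from $\Vert\delta Q\Vert\le\tau_0h^{N_1+n}/C$ (part a)) and the stability of singular values (a case of (\ref{s.9})), since $t_\nu(P_0-z)\ge\tau_0$ for $\nu>N$.

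It remains to prove (\ref{sp.16}). For this I use the rearrangement $s_a(X+Y)\ge s_{a+b-1}(X)-s_b(Y)$ of (\ref{s.9}) with $X=\beta\tau_0M_q$, $Y=E_{-+}^0+{\cal R}$ and $b=N-[N-\theta N]\ (\le\theta N+1)$: the case hypothesis makes $s_b(E_{-+}^0)=t_{[N-\theta N]+1}$ small, so (recall $N_2>2N_1+2n$) $s_b(Y)={\cal O}(\tau_0h^{2N_1+2n})$, while for $1\le a\le b$ the index $a+b-1\le2\theta N+1<N/2$ (using $\theta<1/4$ and $N$ large) lands in the range where (\ref{spe.9}), after dropping the $L^{-(s-n/2-\epsilon)}$-term (negligible against $Nh^n$ by the lower bound on $L$ in (\ref{int.6.4})) and invoking Stirling's formula, yields $s_{a+b-1}(M_q)\ge Nh^n/C'$. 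Hence for $1\le a\le b$,
$$
s_a(E_{-+}^\delta)\ge\beta\tau_0\,\frac{Nh^n}{C'}-{\cal O}(\tau_0h^{2N_1+2n})=\tau_0h^{2N_1+2n}\Bigl(\frac{N}{CC'\widetilde C}-{\cal O}(1)\Bigr)\ge\tau_0h^{2N_1+2n},
$$
for $N$ large enough, and feeding this back through (\ref{s.16}) together with $t_\nu(E_{-+}^\delta)=s_{N+1-\nu}(E_{-+}^\delta)$ gives $t_\nu(P_\delta-z)\ge\frac18s_{N+1-\nu}(E_{-+}^\delta)\ge\tau_0h^{N_2}$ for $[N-\theta N]+1\le\nu\le N$, which is (\ref{sp.16}). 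For $N={\cal O}(1)$ the same dichotomy is used with $b=1$: in the nontrivial case I bound $\Vert E_{-+}^\delta\Vert\ge\beta\tau_0\Vert M_q\Vert-\Vert E_{-+}^0\Vert-\Vert{\cal R}\Vert\gtrsim\tau_0h^{2N_1+n}$, using $\Vert M_q\Vert\ge(N!)^{1/N}/\mathrm{vol}(V)+{\cal O}(h^\infty)-{\cal O}(h^n)\gtrsim1$ and $\Vert E_{-+}^0\Vert=t_N<\tau_0h^{N_2}$, which gives (\ref{sp.16a}).

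The step I expect to be the main obstacle is this last lower bound on $s_a(E_{-+}^\delta)$. Since $E_{-+}^0$ need not be small — its norm can be of order $\tau_0$ — it cannot be treated as an error, so the singular values of the perturbation $\beta\tau_0M_q$ are not visible in those of $E_{-+}^\delta$ without further input; the dichotomy together with the inequality $s_a(X+Y)\ge s_{a+b-1}(X)-s_b(Y)$ applied with $b$ of size $\theta N$ is exactly the device that lets one ``see past'' the $\theta N$ largest singular values of $E_{-+}^0$ in the only case where this is necessary. The accompanying arithmetic of $h$-exponents — requiring $\tau_0h^{2N_1+2n}N$ to beat both the perturbative error ${\cal O}(\tau_0h^{2N_1+2n})$ (which is why ``$N$ sufficiently large'' is needed) and the target $\tau_0h^{N_2}=\tau_0h^{2N_1+2n+\epsilon_0}$ (automatic for small $h$) — is what fixes the precise form of $N_1$ in (\ref{sv.14b}) and of $N_2$ in (\ref{sp.11}).
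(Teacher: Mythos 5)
Your proof is correct and follows essentially the same strategy as the paper's: the dichotomy on whether $t_{[N-\theta N]+1}(P_0-z)$ already exceeds $\tau_0 h^{N_2}$; the Grushin reduction with $f_j=\Gamma e_j$ (this use of the symmetry assumption (\ref{int.6}) is exactly the paper's device); the Neumann expansion (\ref{gr.7}) identifying the leading perturbation of $E_{-+}$ as a scalar multiple of $M_q$; and the shifted Weyl inequality $s_a(X+Y)\ge s_{a+b-1}(X)-s_b(Y)$ with $b\approx\theta N$ to look past the $\theta N$ largest singular values of $E_{-+}^0$, followed by $t_\nu(P_\delta-z)\ge t_\nu(E_{-+}^\delta)/8$ from (\ref{s.16}). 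The only cosmetic difference is that you extract the sharper bound $s_{a+b-1}(M_q)\gtrsim Nh^n$ from (\ref{spe.9}) (correctly — keeping the full $(N!)^{1/(N-k+1)}$ factor gives the extra $N$), and then absorb the remainder ${\cal O}(\tau_0 h^{2(N_1+n)})$ by taking $N$ large, whereas the paper settles for $s_k(M_q)\gtrsim h^n$ and instead absorbs the remainder by taking the constant $C$ in $\delta=\tau_0 h^{N_1+n}/C$ sufficiently large; both routes close the estimate, and they also coincide for the paper's three-term version of the Weyl inequality (your grouping $Y=E_{-+}^0+{\cal R}$ is an inessential reorganisation).
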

\begin{proof}
The part a) follows from Section \ref{al}, the definition of
admissible potentials in the introduction and from the definition of
$N_1$ in (\ref{sv.14b}). (See also (\ref{spe.11}).) We shall therefore concentrate on the proof
of b).

Let $e_1,...,e_N$ be an
orthonormal family of eigenfunctions corresponding to $t_\nu (P_0-z)$, 
so that 
\ekv{sv.6}
{
(P_0-z)^*(P_0-z)e_j=(t_j(P_0-z))^2e_j.
}
Using the symmetry assumption \no{int.6} $\Leftrightarrow$ \no{sv.2}, we see that a
corresponding family of eigenfunctions of $(P-z)(P-z)^*$ is given by 
\ekv{sv.7}
{
\widetilde{f}_j=\Gamma e_j.
}
If the non-vanishing $t_j$ are not all distinct it is not immediately
clear that we can arrange so that $\widetilde{f}_j=f_j$ in \no{gr.16},
but we know that $\widetilde{f}_1,...,\widetilde{f}_N$ and $f_1,...,f_N$ are 
orthonormal families  that span the same space $F_N$. Let $E_N$ be the 
span of $e_1,...,e_N$. We then know that 
\ekv{sv.8}
{
(P_0-z):E_N\to F_N \hbox{ and }(P_0-z)^*:F_N\to E_N
}
have the same singular values $0\le t_1\le t_2\le ...\le t_N$. 

\par Define $R_+:L^2\to {\bf C}^N$, $R_-:{\bf C}^N\to L^2$ by
\ekv{sv.9}
{
R_+u(j)=(u|e_j),\quad R_-u_-=\sum_1^N u_-(j)\widetilde{f}_j.
}
Then 
\ekv{sv.9.5}
{
{\cal P}=\left(\begin{array}{ccc}P_0-z &R_-\\ R_+ &0 \end{array}\right)
:{\cal D}(P_0)\times {\bf C}^N \to L^2\times {\bf C}^N
}
has a bounded inverse 
$$
{\cal E}=\left(\begin{array}{ccc}E &E_+\\ E_-
    &E_{-+} \end{array}\right) .
$$
Since we do not necessarily have \no{gr.16} we cannot say that
$E_{-+}=
\mathrm{diag\,}(t_j)$ but we know that the singular values of $E_{-+}$
are given by
$
t_j(E_{-+})=t_j(P_0-z),\ 1\le j\le N,
$ 
or equivalently by $s_j(E_{-+})=t_{N+1-j}(P_0-z)$, for $1\le j\le N$.
   
\par We will apply Section \ref{gr}, and recall that $N$ is assumed to
be sufficiently large and that $\theta $ has been fixed in $]0,1/4[$.
(The case of bounded $N$ will be treated later.) 
Let $N_2$ be given in (\ref{sp.11}). Since $z$ is fixed it will also be
notationally convenient to assume that $z=0$.

\medskip\par\noindent 
\emph{Case 1.} $s_j(E_{-+})\ge \tau_0h^{N_2}$, 
for $1\le j \le N-[(1-\theta )N]$. Then we get the proposition with
$q =0$, $P_\delta =P_0$.

\medskip
\par\noindent \emph{Case 2.}
\ekv{sp.2}
{
s_j(E_{-+})<\tau_0h^{N_2} \hbox{ for some }j\hbox{ such that }1\le
j \le N-[(1-\theta )N].
}

Recall that for the special 
admissible potential $q$ in \no{spe.4}, we have
\no{spe.9}. For $k\le N/2$, we have $N-k+1>N/2$, so 
$$
\frac{k-1}{N-k+1}\le 1,
$$
and \no{spe.9} gives
$$
s_k(M_q)\ge \frac{1+{\cal O}(h^\infty )}{C}
\frac{h^n}{N}(N!)^\frac{1}{N}-C_\epsilon
L^{-(s-\frac{n}{2}-\epsilon )}\frac{N}{h^n}.
$$
By Stirling's formula, we have $(N!)^\frac{1}{N}\ge N/\mathrm{Const}$,
so for $1\le k\le N/2$, we obtain with a new constant $C>0$:
$$
s_k(M_q)\ge \frac{h^n}{C}-C_\epsilon
L^{-(s-\frac{n}{2}-\epsilon )}\frac{N}{h^n}.
$$
Here, we recall from Proposition \ref{sv1} (which also applies to
$P_0$) that $N={\cal O}(h^{\kappa
  -n})$
and choose $L$ so
that 
$$
L^{-(s-\frac{n}{2}-\epsilon )}h^{\kappa -2n}\ll h^n,
$$
i.e. so that (in agreement with \no{int.6.4}) 
\ekv{sv.10}
{
L\gg h^\frac{\kappa -3n}{s-\frac{n}{2}-\epsilon }.
}
We then get 
\ekv{sv.11}
{
s_k(M_q)\ge \frac{h^n}{C},\ 1\le k\le \frac{N}{2},
}
for a new constant $C>0$.

\par From (\ref{spe.10}) and the fact that $N={\cal O}(h^{\kappa -n})$
we get
\ekv{sv.12}
{
s_1(M_q)\le \Vert M_q\Vert\le CNh^{-n}\le \widetilde{C}h^{\kappa -2n}.
}

\par In addition to the lower bound \no{sv.10} we assume as in 
\no{int.6.4} (in all
cases) that 
\ekv{sv.13}
{
L\le C h^{-M},\mbox{ for some } M\ge \frac{3n-\kappa }{s-\frac{n}{2}-\epsilon }.
}
As we saw after (\ref{spe.6}), $q$ is indeed an admissible potential
as in (\ref{int.6.3}), (\ref{int.6.4}),
so that by (\ref{sv.14a})
\ekv{sv.14}
{
\Vert q\Vert_\infty \le Ch^{-\frac{n}{2}}\Vert q\Vert_{H^s}
\le \widetilde{C}h^{-N_1}.}

Put 
\ekv{sp.3}{P_\delta =P_0+\frac{\delta h^{N_1}}{\widetilde{C}}q=P_0+\delta Q,\
Q=\frac{h^{N_1}}{\widetilde{C}}q,\ \Vert Q\Vert \le 1.}
Then, if $\delta \le \tau_0/2$, we can replace $P_0$ by $P_\delta $
in \no{sv.9.5} and we still have a well-posed problem with inverse as in
\no{gr.6.5}--\no{gr.7.3}, satisfying \no{s.14}--\no{s.16} 
with $Q_\omega =Q$ as
above. Here $E_-^0QE_+^0=h^{N_1}M_q/\widetilde{C}$ 
so according to \no{sv.11},
we have with a new constant $C$
\ekv{sp.4}
{
s_k(\delta E_-^0QE_+^0)\ge \frac{\delta h^{N_1+n}}{C},\ 1\le
k\le \frac{N}{2}.
}

\par Playing with the general estimate \no{s.9}, we get
$$s_\nu (A+B)\ge s_{\nu +k-1}(A)-s_k(B)$$ and for a sum of three operators
$$s_\nu (A+B+C)\ge s_{\nu +k+\ell -2}(A)-s_k(B)-s_\ell (C).$$ We apply this to 
$E_{-+}^\delta $ in \no{s.15} and get 
\ekv{sp.6}
{
s_\nu (E_{-+}^\delta )\ge s_{\nu +k-1}(\delta
E_-^0QE_+^0)-s_k(E_{-+}^0)-2\frac{\delta ^2}{\tau_0}.
} 
Here we use \no{sp.2} with $j=k=N-[(1-\theta )N]$ as well as 
\no{sp.4}, to
get for $\nu  \le N-[(1-\theta )N]$ 
\ekv{sp.7}
{
s_\nu (E_{-+}^\delta )\ge 
\frac{\delta h^{N_1+n}}{C}
-\tau
_0h^{N_2}-2\frac{\delta ^2}{\tau_0}.
}
Recall that $\theta <\frac{1}{4}$.

\par Choose 
\ekv{sp.9}
{
\delta =\frac{1}{C}\tau_0h^{N_1+n},
}
where (the new constant) $C>0$ is sufficiently large.

\par 
Then, with a new constant $C>0$,
we get (for $h>0$ small enough)
\ekv{sp.10}
{
s_\nu (E_{-+}^\delta )\ge \frac{\delta }{C}h^{N_1+n},\ 1\le \nu \le N-
[(1- \theta) N],
}
implying
\ekv{sp.12}
{
s_\nu (E_{-+}^\delta )\ge 8\tau_0h^{N_2},\ 1\le \nu  \le N-
[(1- \theta) N].} For the corresponding
operator $P_\delta $, we have for $\nu >N$:
$$
t_\nu (P_\delta )\ge t_\nu (P_0)-\delta =t_\nu (P_0)-\frac{\tau _0h^{N_1+n}}{C}.
$$ Since $t_\nu (P)\ge \tau_0$ in this case, we get (\ref{sp.15}).

\par From \no{sp.12} and \no{s.16}, we
get (\ref{sp.16}).

\par When $N={\cal O}(1)$, we still get (\ref{sp.7}) with $\nu =1$ and
this leads to (\ref{sp.16a}).

\end{proof}

\par The construction can now be iterated. assume that $N\gg 1$ and 
replace $(P_0,N,\tau_0)$
by $(P_\delta ,[(1-\theta )N], \tau_0h^{N_2})=:
(P^{(1)},N^{(1)},\tau_0^{(1)})$ and keep on, using the same values
for the exponents $N_1,N_2$. Then we get a sequence $(P^{(k)},N^{(k)},\tau
_0^{(k)})$, $k=0,1,...,k(N)$, where the last value $k(N)$ is
determined by the fact that $N^{(k(N))}$ is of the order of magnitude of
a large constant. Moreover,
\ekv{sp.17}
{
t_\nu (P^{(k)})\ge \tau_0^{(k)},\ N^{(k)}<\nu \le N^{(k-1)},
} 
\ekv{sp.18}
{
t_\nu (P^{(k+1)})\ge t_\nu (P^{(k)})-
\frac{\tau_0^{(k)}h^{N_1+\nu }}{C},\ \nu >N^{(k)},
}
\ekv{sp.18.1}
{
\tau_0^{(k+1)}=\tau_0^{(k)}h^{N_2},
}
\ekv{sp.18.2}
{
N^{(k+1)}=[(1-\theta )N^{(k)}],
}
$$
P^{(0)}=P,\ N^{(0)}=N,\ \tau_0^{(0)}=\tau_0.
$$
Here,
\begin{eqnarray*}
&P^{(k+1)}=P^{(k)}+\delta ^{(k+1)}Q^{(k+1)}=P^{(k)}+\frac{\delta
  ^{(k+1)}h^{N_1}}{\widetilde{C}}q^{(k+1)},&\\ &\Vert Q^{(k+1)}\Vert\le 1,\
\delta ^{(k+1)}=\frac{1}{C}\tau_0^{(k)}h^{N_1+n}.&
\end{eqnarray*}
Notice that $N^{(k)}$ decays exponentially fast with $k$:
\ekv{sp.18.5}
{
N^{(k)}\le (1-\theta )^kN,
}
so we get the condition on $k$ that $(1-\theta )^kN\ge C\ll 1$ which
gives,
\ekv{sp.19}{k\le \frac{\ln \frac{N}{C}}{\ln \frac{1}{1-\theta }}.}
We also have 
\ekv{sp.20}
{
\tau_0^{(k)}=\tau_0\left( h^{N_2} \right)^k .
}

\par For $\nu >N$, we iterate \no{sp.18}, to get
\eekv{sp.23}
{t_\nu (P^{(k)})&\ge& t_\nu (P)- \tau_0 \frac{h^{N_1+n}}{C}\left( 1+
h^{N_2}+h^{2N_2}+...\right)}
{&\ge& t_\nu (P)-\tau_0 {\cal O}(\frac{h^{N_1+n}}{C}).}

\par For $1\ll \nu \le N$, let $\ell=\ell (N)$ be the unique value for
which $N^{(\ell )}<\nu \le N^{(\ell -1)}$, so that 
\ekv{sp.24}
{
t_\nu (P^{(\ell )})\ge \tau_0^{(\ell )},
}
by \no{sp.17}. If $k>\ell $, we get 
\ekv{sp.24.5}
{
t_\nu (P^{(k)})\ge t_\nu (P^{(\ell )})-
\tau_0^{(\ell )}{\cal O}(\frac{h^{N_1+n}}{C}) .
}

\par The iteration above works until we reach a value $k=k_0={\cal O}
(\frac{\ln \frac{N}{C}}{\ln \frac{1}{1-\theta }})$ for which
$N^{(k_0)}={\cal O}(1)$. After that, we continue the iteration further 
by decreasing
$N^{(k)}$ by one unit at each step. 

\medskip

\par Summing up the discussion so far, we have obtained
\begin{prop}\label{sp2}
Let $(P,z)$ satisfy the assumptions as in the beginning of this
section and choose $P_0$ as in (\ref{sv.5.5}).
 Let
$s>\frac{n}{2}$, $0<\epsilon <s-\frac{n}{2}$, $M\ge \frac{3n-\kappa
}{s-\frac{n}{2}-\epsilon }$, $N_1=\widetilde{M}+sM+\frac{n}{2}$, 
$N_2=2(N_1+n)+\epsilon _0$, where $\epsilon _0>0$. Let $L$ be an
$h$-dependent parameter satisfying
\ekv{sp.24.8}
{
h^{\frac{\kappa -3n}{s-\frac{n}{2}-\epsilon }}\ll L\le C h^{-M}. 
}
Let $0<\tau_0\le \sqrt{h}$ and let $N^{(0)}={\cal O}(h^{\kappa -n})$ 
be the number of singular
values of $P_0-z$ in $[0,\tau_0[$. Let $0<\theta <\frac{1}{4}$ and let
$N(\theta )\gg 1$ be sufficiently large. Define $N^{(k)}$, $1\le k\le
k_1$ iteratively in the following way. As long as $N^{(k)}\ge
N(\theta )$, we put $N^{(k+1)}=[(1-\theta )N^{(k)}]$. Let $k_0\ge 0$
be the last $k$ value we get in this way. For $k>k_0$ put 
$N^{(k+1)}=N^{(k)}-1$, until
we reach the value $k_1$ for which $N^{(k_1)}=1$.

\par Put $\tau_0^{(k)}=\tau_0h^{kN_2}$, $1\le k\le k_1+1$. Then there 
exists an admissible potential $q=q_h(x)$ as in (\ref{int.6.3}), (\ref{int.6.4}), satisfying \no{spe.6}, \no{spe.11}, so that,
$$
\Vert q\Vert_{H^s}\le {\cal O}(1)h^{-N_1+\frac{n}{2}}, \
\Vert q\Vert_{L^\infty }\le {\cal O}(1)h^{-N_1}, 
$$
such that if $P_{\delta}=P_0+\frac{1}{C}\tau
_0h^{2N_1+n}q=P_0+\delta Q$, $\delta =\frac{1}{C}h^{N_1+n}\tau_0$, $Q=h^{N_1}q$,
we have the following estimates on the singular values of 
$P_{\delta }-z$:
\begin{itemize}
\item If $\nu >N^{(0)}$, we have 
$t_\nu (P_{\delta }-z)\ge (1-\frac{h^{N_1+n}}{C})t_\nu (P_0-z)$.
\item If $N^{(k)}<\nu \le N^{(k-1)},$ $1\le k\le k_1$, then $
t_\nu (P_\delta -z)\ge (1-{\cal O}(h^{N_1+n}))\tau_0^{(k)}$.

\item Finally, for $\nu =N^{(k_1)}=1$, we have  $
t_1(P_\delta -z)\ge (1-{\cal O}(h^{N_1+n}))\tau_0^{(k_1+1)}$.
\end{itemize}
\end{prop}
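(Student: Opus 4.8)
The plan is to prove Proposition~\ref{sp2} by iterating Proposition~\ref{sp1}b) exactly along the lines sketched in the discussion \no{sp.17}--\no{sp.24.5} that precedes the statement, and then to repackage the output into the three displayed items. I would start from $(P^{(0)},N^{(0)},\tau_0^{(0)})=(P_0,N^{(0)},\tau_0)$, recalling that $N^{(0)}={\cal O}(h^{\kappa-n})$ by Proposition~\ref{sv1} applied to $P_0$: since $\tau_0\le\sqrt h$, the number of singular values of $P_0-z$ in $[0,\tau_0[$ is dominated by the count in $[0,(Ch)^{1/2}]$, i.e.\ by the number of eigenvalues of $(P_0-z)^*(P_0-z)$ in $[0,Ch]$, which is ${\cal O}(h^{\kappa-n})$. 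At the $k$-th stage I apply Proposition~\ref{sp1}b) with $(P_0,N,\tau_0)$ there replaced by $(P^{(k)},N^{(k)},\tau_0^{(k)})$: this produces an admissible potential $q^{(k+1)}$ (for the same parameter $L$) and the operator $P^{(k+1)}=P^{(k)}+\delta^{(k+1)}Q^{(k+1)}$, with $Q^{(k+1)}$ proportional to $h^{N_1}q^{(k+1)}$ and normalised so that $\|Q^{(k+1)}\|\le1$, $\delta^{(k+1)}=\tau_0^{(k)}h^{N_1+n}/C$, together with the estimates \no{sp.17}--\no{sp.18.1}. As long as $N^{(k)}\ge N(\theta)$ I keep the rule $N^{(k+1)}=[(1-\theta)N^{(k)}]$, which by \no{sp.18.5} halts after $k_0={\cal O}(\ln N^{(0)}/\ln\frac{1}{1-\theta})$ steps with $N^{(k_0)}={\cal O}(1)$; from there I switch to the bounded-$N$ variant of Proposition~\ref{sp1} (with \no{sp.16a} in place of \no{sp.16}), decreasing $N^{(k)}$ by one unit at each step, which needs only ${\cal O}(1)$ further steps to reach $N^{(k_1)}=1$. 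At every stage one must check that the new operator is again of the form \no{sv.5.5} so that Propositions~\ref{sv1} and~\ref{sp1} may be reapplied: the potential added to $P_0$ up to stage $k$ is a finite sum, and by \no{spe.11} together with $\|q^{(j)}\|_{H^s}={\cal O}(h^{-N_1+n/2})$ its $H^s$-norm is bounded by a geometric series in $h^{N_2}$ (recall $\tau_0^{(j)}=\tau_0 h^{jN_2}$ with $N_2>0$), dominated by its first term, hence $\le{\cal O}(1)\tau_0 h^{N_1+3n/2}$; this contributes a ``$\delta_0 q_0$''-term with $\delta_0\le{\cal O}(\tau_0 h^{N_1+n})\ll h$, so \no{sv.5.5} is preserved throughout.

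Next I would assemble the final perturbation as the accumulated sum $P_\delta=P_0+\sum_j\delta^{(j)}Q^{(j)}$. Pulling out the common factor, this takes exactly the form $P_\delta=P_0+\frac{1}{C}\tau_0 h^{2N_1+n}q$ announced in the statement, with $q=\sum_j h^{(j-1)N_2}q^{(j)}$. Each $q^{(j)}=\chi_0\sum_{\mu_k\le L}\alpha_k^{(j)}\epsilon_k$ is admissible with coefficient vector obeying \no{spe.6} (using $N^{(j-1)}\le N^{(0)}={\cal O}(h^{\kappa-n})$), and since the weights $h^{(j-1)N_2}$ decay geometrically, $q$ is again of the form \no{int.6.3} with $|\alpha|_{{\bf C}^D}$ bounded by ${\cal O}(1)$ times the bound for $q^{(1)}$, hence an admissible potential in the sense of \no{int.6.3}, \no{int.6.4}; the same geometric domination gives $\|q\|_{H^s}\le{\cal O}(1)h^{-N_1+n/2}$ from \no{spe.11} and $N_1=\widetilde M+sM+\frac{n}{2}$, whence $\|q\|_\infty\le{\cal O}(1)h^{-N_1}$ by \no{al.1}. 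Finally the three singular-value bounds follow by telescoping \no{sp.18} exactly as in \no{sp.23}--\no{sp.24.5}: for $\nu>N^{(0)}$ one sums $\sum_j\delta^{(j)}={\cal O}(\tau_0 h^{N_1+n}/C)$ and divides by $t_\nu(P_0-z)\ge\tau_0$ to get $t_\nu(P_\delta-z)\ge(1-h^{N_1+n}/C)t_\nu(P_0-z)$; for $N^{(k)}<\nu\le N^{(k-1)}$ one starts from $t_\nu(P^{(k)}-z)\ge\tau_0^{(k)}$ (which is \no{sp.17}, i.e.\ \no{sp.16} at that stage) and subtracts the tail $\sum_{j\ge k}\delta^{(j+1)}={\cal O}(\tau_0^{(k)}h^{N_1+n}/C)$; the case $\nu=N^{(k_1)}=1$ is identical with $\tau_0^{(k_1+1)}$ in place of $\tau_0^{(k)}$, now using \no{sp.16a}.

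The part I expect to require the most care is the bookkeeping in the first paragraph: verifying that every intermediate operator $P^{(k)}$ still lies in the class \no{sv.5.5}, so that Proposition~\ref{sp1} applies verbatim at each stage, and that the number of relevant small singular values really is governed by $N^{(k)}=[(1-\theta)N^{(k-1)}]$ (respectively $N^{(k-1)}-1$ in the tail phase). Everything else amounts to summing geometric series whose ratio is a fixed positive power of $h$; since $N_2>0$ and $\tau_0\le\sqrt h$, all of these converge with plenty of room, so the construction closes up and yields the stated admissible potential and estimates.
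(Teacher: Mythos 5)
Your proof proposal is correct and follows essentially the same iterative route as the paper's own argument, which is precisely the discussion from \no{sp.17} through \no{sp.24.5} and the paragraph on the bounded-$N$ tail. The one place where you go beyond what the paper spells out --- checking that each intermediate $P^{(k)}$ stays in the class \no{sv.5.5} so that Propositions~\ref{sv1} and~\ref{sp1} can be reapplied --- is a worthwhile piece of bookkeeping and your geometric-series estimate for it is correct.
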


\par We shall now obtain the corresponding estimates for the singular 
values of $P_{\delta ,z}=(\widetilde{P}_\delta -z)^{-1}(P_\delta -z)$.
Let $e_1,...,e_N$ be an orthonormal family corresponding to
the singular values $t_j(P_\delta )$ in $[0,\sqrt{h}[$, put
$\widetilde{f}_j=\overline{e}_j$ and let 
$$(P_\delta -z) u+R_-u_-=v,\ R_+u=v_+$$
be the corresponding Grushin problem so that the solution
operators fulfil
\ekv{sp.25}
{\Vert E\Vert\le \frac{1}{\sqrt{h}},\ \Vert E_\pm\Vert \le 1,\quad
t_j(E_{-+})=t_j(P_\delta )\le \sqrt{h},\ 1\le j\le N.
} 
Still with $z=0$ we put $\widetilde{R}_-=\widetilde{P}_\delta^{-1}
R_-$. Then the problem
$$
P_{\delta ,z}u+\widetilde{R}_-u_-=v,\ R_+u=v_+,
$$
is wellposed with the solution
$$
u=\widetilde{E}v+\widetilde{E}_+v_+,\    u_-=\widetilde{E}_-v+
\widetilde{E}_{-+}v_+,
$$
where
\begin{eqnarray*}
\widetilde{E}=E\widetilde{P}_\delta &,&\ \widetilde{E}_+=E_+\\
\widetilde{E}_-=E_-\widetilde{P}_\delta &,&\ \widetilde{E}_{-+}=E_{-+}.
\end{eqnarray*}
Adapting the estimate \no{s.12} to our situation, we get
\ekv{sp.26}
{
t_k(P_{\delta ,z})\ge 
\frac{t_k(P_\delta )}{\Vert E \widetilde{P}_\delta 
\Vert t_k(P_\delta )+
\Vert E_+\Vert \Vert E_-\widetilde{P}_\delta \Vert},\ 1\le k\le N,
}
where we also recall that $t_k(P_\delta )\le \sqrt{h}$.

\par Write 
\begin{eqnarray*}
E\widetilde{P}_\delta &=&EP_\delta +E (\widetilde{P}-P)\\
E_-\widetilde{P}_\delta &=&E_-P_\delta +E_- (\widetilde{P}-P)
\end{eqnarray*}
 and use that
\begin{eqnarray*}
EP_\delta =1-E_+R_+={\cal O}(1) \hbox{ in }{\cal L}(L^2,L^2)\\
E_-P_\delta =-E_{-+}R_+={\cal O}(\sqrt{h}) \hbox{ in }{\cal L}(L^2,\ell ^2)
\end{eqnarray*}
together with \no{sp.25} and the fact that $\Vert \widetilde{P}-P\Vert
={\cal O}(1)$. It follows that 
$$
\Vert E\widetilde{P}_\delta \Vert= {\cal O}(\frac{1}{\sqrt{h}}),\ 
\Vert E_-\widetilde{P}_\delta \Vert= {\cal O}(1).
$$
Using this in \no{sp.26}, we get 
\ekv{sp.27}
{
t_k(P_{\delta ,z})\ge \frac{t_k(P_\delta )}{C \frac{t_k(P_\delta )}{\sqrt{h}}+C}
\ge \frac{t_k(P_\delta )}{2C},
}
where used that $t_k(P_\delta )\le \sqrt{h}$ when $1\le k\le N^{(0)}$. Now
the choice of $N_2$ gives us some margin and we can even get rid of
the effect of $2C$ and get for $\tau_0\in ]0,\sqrt{h}]$: 
\begin{prop}\label{sp3}
Proposition \ref{sp2} remains valid if we replace $P_\delta -z$ there
with $P_{\delta ,z}$.
\end{prop}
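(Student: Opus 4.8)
The plan is to observe that the work is essentially done: Proposition \ref{sp2} provides an admissible potential $q$ and the associated operator $P_\delta = P_0 + \delta Q$ with lower bounds on all the singular values $t_\nu(P_\delta - z)$, while the passage from $P_\delta - z$ to $P_{\delta,z} = (\widetilde{P}_\delta - z)^{-1}(P_\delta - z)$ has just been carried out in the estimate \no{sp.27}, namely $t_k(P_{\delta,z}) \ge t_k(P_\delta - z)/(2C)$ for $1 \le k \le N^{(0)}$. So the only thing to verify is that dividing the three lower bounds in Proposition \ref{sp2} by the fixed constant $2C$ still leaves bounds of the same shape, and that for the indices $\nu > N^{(0)}$ (not covered by \no{sp.27}, since there $t_\nu(P_\delta - z)$ may exceed $\sqrt h$) one has a separate, even easier argument.

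First I would treat the range $\nu > N^{(0)}$. For these indices Proposition \ref{sp2} gives $t_\nu(P_\delta - z) \ge (1 - h^{N_1+n}/C)\,t_\nu(P_0 - z)$, and by definition of $N^{(0)}$ we have $t_\nu(P_0 - z) \ge \tau_0$, hence $t_\nu(P_\delta - z)$ is bounded below by a quantity comparable to $\tau_0 \le \sqrt h$; but actually for the conclusion about $P_{\delta,z}$ we do not even need a sharp statement here, only that the singular values that are $\ge \tau_0$ for $P_\delta - z$ remain bounded below for $P_{\delta,z}$. Since $(\widetilde P_\delta - z)^{-1}$ and $\widetilde P_\delta - z$ are both uniformly bounded on $L^2$ (by \no{hs.7} and the ellipticity of $\widetilde P - z$), multiplication by $(\widetilde P_\delta - z)^{-1}$ changes each $t_\nu$ only by a bounded factor in both directions, so $t_\nu(P_{\delta,z}) \ge t_\nu(P_\delta - z)/C'$ for \emph{all} $\nu$ with a fixed constant $C'$; combined with \no{sp.27} (which sharpens this to the same $2C$ for $\nu \le N^{(0)}$) this disposes of the first bullet. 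Concretely, the cleanest route is: from $P_{\delta,z} = (\widetilde P_\delta - z)^{-1}(P_\delta - z)$ and $\|(\widetilde P_\delta - z)\|_{L^2\to L^2} = {\cal O}(1)$ we get $P_\delta - z = (\widetilde P_\delta - z) P_{\delta,z}$, so $t_\nu(P_\delta - z) \le \|\widetilde P_\delta - z\|\, t_\nu(P_{\delta,z}) \le {\cal O}(1)\, t_\nu(P_{\delta,z})$, i.e. $t_\nu(P_{\delta,z}) \ge t_\nu(P_\delta - z)/{\cal O}(1)$ for every $\nu$.

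Next I would handle the two remaining bullets, $N^{(k)} < \nu \le N^{(k-1)}$ and $\nu = N^{(k_1)} = 1$. Here $\nu \le N^{(0)}$, so \no{sp.27} applies directly and gives $t_\nu(P_{\delta,z}) \ge t_\nu(P_\delta - z)/(2C)$. Plugging in the bounds from Proposition \ref{sp2} yields $t_\nu(P_{\delta,z}) \ge (1 - {\cal O}(h^{N_1+n}))\, \tau_0^{(k)}/(2C)$, respectively $t_1(P_{\delta,z}) \ge (1 - {\cal O}(h^{N_1+n}))\, \tau_0^{(k_1+1)}/(2C)$. To absorb the stray $2C$ and restore the exact form of Proposition \ref{sp2}, I would invoke the remark made just before the statement: the choice of $N_2 = 2(N_1+n) + \epsilon_0$ carries a genuine margin $\epsilon_0 > 0$ in the exponent. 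Indeed, each $\tau_0^{(k)} = \tau_0 h^{kN_2}$, and one checks (as in the derivation of \no{sp.12} from \no{sp.10}, where a factor $8$ was similarly swallowed) that $h^{\epsilon_0}$ beats any fixed constant for $h$ small, so $\tau_0^{(k)}/(2C) \ge \tau_0 h^{k N_2'}$ with $N_2' = 2(N_1+n) + \epsilon_0'$ for a slightly smaller $\epsilon_0' > 0$; since $\epsilon_0$ was an arbitrary positive number to begin with, the statement of Proposition \ref{sp2} with $N_2 = 2(N_1+n)+\epsilon_0$ is reproduced verbatim with $P_\delta - z$ replaced by $P_{\delta,z}$. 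Alternatively, and more simply, one just notes that all constants in Proposition \ref{sp2} are already of the form "$(1-{\cal O}(h^{N_1+n}))$ times a power of $h$ times $\tau_0$", and the extra $1/(2C)$ is harmless because the reader is free to enlarge the implicit constants; this is exactly the content of the one-line comment "the choice of $N_2$ gives us some margin and we can even get rid of the effect of $2C$" preceding the proposition.

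The main (and only) real obstacle is the bookkeeping around the constant $2C$ versus the claimed clean bounds: one must be slightly careful that the $\epsilon_0$-margin in $N_2$ is used once and not "spent twice" across the $k_1 = {\cal O}(\ln(N/C)/\ln\frac{1}{1-\theta}) = {\cal O}(\ln\frac1h)$ iteration steps — but since the $2C$ in \no{sp.27} is a \emph{single} application (it comes from bounding $\|E\widetilde P_\delta\|$ and $\|E_-\widetilde P_\delta\|$ once, not once per step), there is no accumulation and a single factor $h^{\epsilon_0 - \epsilon_0'} \le 1/(2C)$ suffices uniformly in $k$. Everything else — the boundedness of $(\widetilde P_\delta - z)^{\pm1}$, the adaptation of \no{s.12}, the Grushin setup — is already in place in the text.
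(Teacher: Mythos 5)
Your treatment of the indices $\nu \le N^{(0)}$ is correct and matches the paper: one uses (\ref{sp.26})--(\ref{sp.27}) to get $t_\nu(P_{\delta,z}) \ge t_\nu(P_\delta - z)/(2C)$, plugs in the bounds from Proposition \ref{sp2}, and absorbs the harmless factor $2C$ into the $\epsilon_0$-margin built into $N_2 = 2(N_1+n)+\epsilon_0$; this is exactly the one-line remark preceding the proposition, and your bookkeeping note that the $2C$ appears once (not per iteration step) is the right observation.

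However, your ``cleanest route'' for $\nu > N^{(0)}$ contains a genuine error: you assert that $\widetilde{P}_\delta - z$ is uniformly bounded on $L^2$, citing (\ref{hs.7}) and ellipticity. But (\ref{hs.7}) only gives boundedness of $(\widetilde{P}_\delta - z)^{-1}$ on $H^{\pm s}$ and $L^2$, and ellipticity of $\widetilde{p} - z$ in $S(m)$ means $\widetilde{P} - z : H(m) \to L^2$ is invertible --- it does \emph{not} make $\widetilde{P}_\delta - z$ bounded on $L^2$. Since $m \ge 1$ is in general an unbounded order function (e.g.\ $m(x,\xi) = m_0(x) + \xi^2$ in Example \ref{int4}), $\widetilde{P}_\delta - z$ is an unbounded operator on $L^2$, and the inequality $t_\nu(P_\delta - z) \le \|\widetilde{P}_\delta - z\|\, t_\nu(P_{\delta,z})$ is vacuous. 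Indeed $t_\nu(P_\delta - z) \to \infty$ as $\nu \to \infty$ while $t_\nu(P_{\delta,z})$ stays bounded (since $P_{\delta,z}$ is bounded), which would immediately contradict a uniform factor $C'$. Note that the paper's own proof does not attempt to cover $\nu > N^{(0)}$: the Grushin estimate (\ref{sp.26}) is only stated for $1 \le k \le N$, and the subsequent determinant bound (\ref{sp.28})--(\ref{sp.29}) only invokes $t_\nu(E_{-+})$ for $\nu \le N^{(0)}$, so the first bullet of Proposition \ref{sp2} is not actually needed in the $P_{\delta,z}$-version. You should either drop that part of the claim or find a different argument; the one you give does not work.
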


Consider the operator $P_{\delta ,z}$ in Proposition \ref{sp3}, let 
$\tau_0\in ]0,\sqrt{h}]$ and
choose a corresponding associated Grushin problem 
$$
{\cal P}_\delta =\left(\begin{array}{ccc}P_{\delta ,z} &R_{-,\delta }
\\ R_{+,\delta } &0 \end{array}\right)
$$
as in \no{gr.1}--\no{gr.3} so that \no{grny.1} holds and moreover
for the corresponding inverse 
$\left(\begin{array}{ccc}E &E_+\\E_- &E_{-+} \end{array}\right)$
$$
t_\nu (E_{-+})=t_\nu (P_{\delta ,z}),\ 1\le \nu \le N^{(0)}.
$$
We have 
\ekv{sp.28}
{
|\det E_{-+}| =\prod _1^{N^{(0)}}t_\nu (E_{-+}),
}
and we shall estimate this quantity from below. In the terms of
Proposition \ref{sp2} we have for $1\le k\le k_0$:
$$
N^{(k-1)}-N^{(k)}=N^{(k-1)}-[(1-\theta )N^{(k-1)}]\le \theta N^{(k-1)}+1\le
1+\theta (1-\theta )^{k-1}N^{(0)},
$$
so according to Proposition \ref{sp3} we know that 
$$
\prod_{1+N^{(k)}}^{N^{(k-1)}}t_\nu (E_{-+})\ge ((1-{\cal
  O}(h^{N_1+n)}))\tau_0h^{kN_2})^{1+\theta (1-\theta )^kN^{(0)}}.
$$
For the bounded number of $k$ with $k_0<k\le k_1$, we have 
$N^{(k-1)}-N^{(k)}=1$ and $t_{N^{(k-1)}}(E_{-+})\ge (1-{\cal
  O}(h^{N_1+n}))\tau_0h^{kN_2}$.
Hence from \no{sp.28}:
\begin{eqnarray*}
\ln |\det E_{-+}|&\ge & -\sum_{k=1}^{k_0}({\cal O}(h^{N_1+n})+\ln
\frac{1}{\tau_0}+kN_2\ln \frac{1}{h})(1+\theta (1-\theta )^{k-1}
N^{(0)})\\
&& -\sum_{k_0+1}^{k_1+1}({\cal O}(h^{N_1+n})+\ln \frac{1}{\tau
  _0}+kN_2 \ln \frac{1}{h}).
\end{eqnarray*}
Recall also that $k_1={\cal O}(\ln N^{(0)})={\cal O}(1)\ln
\frac{1}{h}$, and that $N^{(0)}={\cal O}(h^{\kappa -n})$ (by \no{sv.5} with $\alpha ={\cal
  O}(h)$, valid for $P_0$). We get 
\eekv{sp.29}
{\ln |\det E_{-+}|&\ge & -C(\ln \frac{1}{\tau_0}+(\ln \frac{1}{h})^2)
\sum_{k=0}^{k_1}(1+\theta (1-\theta )^kN^{(0)})}
{&\ge & -\widetilde{C}(\ln \frac{1}{\tau_0}+(\ln \frac{1}{h})^2)
(h^{\kappa -n}+\ln \frac{1}{h}).
} 

\par Combining this estimate with \no{grny.1} and \no{grny.9} for
$\alpha =h$, we get
when $\tau_0 =\sqrt{h}$: 
\begin{prop}\label{sp4}
For the special admissible perturbation $P_\delta $ in the
propositions \ref{sp2}, \ref{sp3}, we have 
\eekv{sp.30}
{
&&\ln |\det P_{\delta ,z}|\ge }
{&&\frac{1}{(2\pi h)^n}
\left( \iint \ln |p_z|dxd\xi -{\cal O}\left(h^{N_1+n-\frac{1}{2}}
+(h^{\kappa }+h^n\ln \frac{1}{h})(\ln \frac{1}{\tau_0}+
(\ln \frac{1}{h})^2)\right)
\right) .
}
\end{prop}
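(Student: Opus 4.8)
The plan is to read off \no{sp.30} by assembling three facts that are already in place, with $\tau_0=\sqrt h$ throughout and with $P_\delta=P_0+\delta Q$ the special admissible perturbation furnished by Propositions \ref{sp2} and \ref{sp3}, so that $\delta=\frac1C\tau_0h^{N_1+n}=\frac1C h^{N_1+n+1/2}$ and $\Vert Q\Vert\le1$. To $P_{\delta,z}=(\widetilde P_\delta-z)^{-1}(P_\delta-z)=1-K_{\delta,z}$, whose deviation $K_{\delta,z}$ from the identity is of trace class, I would attach the Grushin problem ${\cal P}_\delta$ of Section \ref{gr} taken with $\alpha=h$, whose inverse has lower-right block $E_{-+}$ with $t_\nu(E_{-+})=t_\nu(P_{\delta,z})$, $1\le\nu\le N^{(0)}$.

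First, since $P_{\delta,z}-1$ is of trace class, Proposition \ref{gr1} (equation \no{grny.1}) applies and gives $\ln|\det P_{\delta,z}|=\ln|\det{\cal P}_\delta|+\ln|\det E_{-+}|$. Second, inserting $\alpha=h$, $\Vert Q\Vert\le1$ and $\delta/\alpha=\frac1C h^{N_1+n-1/2}$ into \no{grny.9} gives
\[
\ln|\det{\cal P}_\delta|=\frac1{(2\pi h)^n}\Big(\iint\ln|p_z|\,dx\,d\xi+{\cal O}\big(h^{\kappa}\ln\frac1h+h^{N_1+n-1/2}\big)\Big).
\]
Third, the lower bound \no{sp.29}, obtained just above from \no{sp.28}, the singular-value estimates of Propositions \ref{sp2}--\ref{sp3} along the iteration steps, the bound $k_1={\cal O}(\ln\frac1h)$ on the number of steps, and $N^{(0)}={\cal O}(h^{\kappa-n})$ (Proposition \ref{sv1} applied to $P_0$), reads
\[
\ln|\det E_{-+}|\ge -\widetilde C\,\big(h^{\kappa-n}+\ln\frac1h\big)\big(\ln\frac1{\tau_0}+(\ln\frac1h)^2\big)=-\widetilde C\,h^{-n}\big(h^{\kappa}+h^{n}\ln\frac1h\big)\big(\ln\frac1{\tau_0}+(\ln\frac1h)^2\big).
\]

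Adding the last two displays, absorbing the constant $(2\pi)^{-n}$, and using that $h^{\kappa}\ln\frac1h\le h^{\kappa}(\ln\frac1h)^2\le\big(h^{\kappa}+h^{n}\ln\frac1h\big)\big(\ln\frac1{\tau_0}+(\ln\frac1h)^2\big)$ for $h$ small (with $\ln\frac1{\tau_0}=\frac12\ln\frac1h$), all error contributions collapse into ${\cal O}\big(h^{N_1+n-1/2}+(h^{\kappa}+h^{n}\ln\frac1h)(\ln\frac1{\tau_0}+(\ln\frac1h)^2)\big)$, which is exactly the bracket in \no{sp.30}.

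No step here is genuinely difficult; the substance is entirely in Propositions \ref{sp2}--\ref{sp3} and in \no{grny.9}. The point to keep track of is purely bookkeeping: one should apply \no{grny.9} and \no{sp.29} to one and the same Grushin factorization (or at least to factorizations of comparable dimension $N\le{\cal O}(h^{\kappa-n})\ll h^{-n}$), so that any mismatch between the count $N(h,\delta)$ underlying \no{grny.9} and the count $N^{(0)}$ underlying \no{sp.29} alters $\ln|\det E_{-+}|$ by at most ${\cal O}(h^{\kappa-n}\ln\frac1h)$, which is already subsumed in the stated error. Granting that, the proof is a short assembly.
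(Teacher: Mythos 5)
Your assembly coincides with the paper's own proof: combine \no{grny.1} with \no{grny.9} taken at $\alpha =h$, and with the lower bound \no{sp.29} on $\ln |\det E_{-+}|$ that comes from the iterative singular-value estimates of Propositions \ref{sp2}--\ref{sp3} and the bound $N^{(0)}={\cal O}(h^{\kappa -n})$. The one caveat is that you fix $\tau_0=\sqrt h$ throughout, whereas the proposition is needed (and stated, hence the explicit $\ln\frac1{\tau_0}$) for all $\tau_0\in ]0,\sqrt h]$; the paper proves the $\tau_0=\sqrt h$ case exactly as you do and then extends it separately by keeping the same Grushin problem and bounding from below the contribution of the singular values of $E_{-+}$ lying in $]\tau_0,\sqrt h]$ by $-{\cal O}(h^{\kappa -n}\ln\frac1{\tau_0})$. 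Your closing ``bookkeeping'' remark points at the right issue but quotes the mismatch bound as ${\cal O}(h^{\kappa-n}\ln\frac1h)$, which is too optimistic when $\tau_0\ll h$ (later the paper even allows $\tau_0$ exponentially small in $h$); the correct bound ${\cal O}(h^{\kappa-n}\ln\frac1{\tau_0})$ is still absorbed by the $\ln\frac1{\tau_0}$ present in the bracket of \no{sp.30}, so the argument goes through once that is stated.
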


\par We also have the upper bound 
$$
|\det E_{-+}| \le \Vert E_{-+}\Vert ^{N^{(0)}}\le 
\exp (CN^{(0)}),
$$
which together with \no{grny.1}, \no{grny.9} gives
\ekv{sp.31}
{
\ln |\det P_{\delta ,z}|\le \frac{1}{(2\pi h)^n}
\left( \iint \ln |p_z|dxd\xi +{\cal O}\left( h^{N_1+n-\frac{1}{2}}
+h^{\kappa }\ln \frac{1}{h}\right)
\right) .
}
Notice that this bound is more general, it only depends on the
fact that the perturbation of $P$ is of the form $\delta Q$ with $\delta
=\tau_0h^{N_1+n}/C$ and with $\Vert Q\Vert ={\cal O}(1)$.

\par When $\tau_0\le \sqrt{h}$
 we keep the same Grushin problem as before and notice that the
 singular values of $E_{-+}$ that are $\le \tau_0$, obey the estimates
 in Proposition \ref{sp2}. Their contribution to $\ln |\det E_{-+}|$
 can still be estimated from below as in \no{sp.29}. The contribution
 from the singular values of $E_{-+}$ that are $>\tau_0$ to $\ln |\det
 E_{-+}|$ can be estimated from below by $-{\cal O}(h^{\kappa -n}\ln
 (1/\tau_0 ))$ and hence \no{sp.29} remains valid in this case. {\it We
 conclude that Proposition \ref{sp4} remains valid when $0<\tau_0\le
 \sqrt{h}$. The same holds for the upper bound \no{sp.31}.}

\section{Estimating the probability that $\det E_{-+}^\delta $ is
small}
\label{pr} \setcounter{equation}{0}

In this section we keep the assumptions on $(P,z)$ of the beginning of
Section \ref{sv} and choose $P_0$ as in (\ref{sv.5.5}). We consider
general $P_\delta $ of the form 
\ekv{pr.1}
{P_\delta =P_0+\delta Q,\ \delta Q=\delta h^{N_1}q(x),\ \delta =\frac{1}{C}h^{N_1+n}\tau_0,
}
where $q$ is an admissible potential as in (\ref{int.6.3}), (\ref{int.6.4}).
Notice that  
$D:=\# \{k;\, \mu _k\le L\}$ satisfies:
\ekv{pr.3}{D \le {\cal O}(L^n h^{-n})\le {\cal O}(h^{-N_3}),\ 
N_3:=n(M+1).
}
With $R$ as in (\ref{int.6.3}), we allow $\alpha $ to vary in the ball
\ekv{pr.4}
{
| \alpha |_{{\bf C}^D}\le 2R={\cal O}(h^{-\widetilde{M}}).
}
(Our probability measure will be supported in $B_{{\bf C}^D}(0,R)$ but
we will need to work in a larger ball.)

\par We consider the holomorphic function 
\ekv{pr.5}
{
F(\alpha )=(\det P_{\delta ,z})\exp (-\frac{1}{(2\pi h)^n}\iint \ln
|p_z| dxd\xi ).
}
 Then by \no{sp.31}, we have
\ekv{pr.6}
{
\ln |F(\alpha )|\le \epsilon _0(h)h^{-n}, \ |\alpha |<2R,
}
and for one particular value $\alpha =\alpha ^0$ with $|\alpha ^0|\le 
\frac{1}{2}R$, corresponding to the special potential in Proposition \ref{sp2}:
\ekv{pr.7}
{
\ln |F(\alpha ^0 )|\ge -\epsilon _0(h)h^{-n},
}
where we put
\ekv{pr.8}
{
\epsilon _0(h)=C\left( h^{N_1+n-\frac{1}{2}}+(h^{\kappa }+h^n\ln \frac{1}{h})
(\ln \frac{1}{\tau_0}+(\ln \frac{1}{h})^2)
\right) .
}
Here $N_1\ge 1/2$ by (\ref{int.6.4.3}) so we can drop the first term in 
(\ref{pr.8}).

\par Let $\alpha ^1\in {\bf C}^D$ with $|\alpha ^1|=R$ and 
consider the holomorphic function of one complex variable
\ekv{pr.9}
{
f(w)=F(\alpha ^0+w\alpha ^1).
}
We will
mainly consider this function for $w$ in the disc 
determined by the condition $|\alpha ^0+w\alpha ^1|<R$:
\ekv{pr.10}
{
D_{\alpha ^0,\alpha ^1}:\left | w+\left( \frac{\alpha ^0}{R} |
\frac{\alpha ^1}{R}\right) \right|^2<1-\left| \frac{\alpha
^0}{R}\right|^2+\left|\left(\frac{\alpha^0}{R}|\frac{\alpha^1}{R}\right)\right|
^2=:r_0^2,}
whose radius is between $\frac{\sqrt{3}}{2}$ and $1$. 

\par From \no{pr.6}, \no{pr.7} we get 
\ekv{pr.11}
{
\ln |f(0)|\ge -\epsilon _0(h) h^{-n},\ 
\ln |f(w)|\le \epsilon _0(h)h^{-n}.
} 
By \no{pr.6}, we may
assume that the last estimate holds in a larger disc, say 
$D(-(\frac{\alpha ^0}{R}|\frac{\alpha ^1}{R}),2r_0)$. Let
$w_1,...,w_M$ be the zeros of $f$ in  
$D(-(\frac{\alpha ^0}{R}|\frac{\alpha ^1}{R}),3r_0/2)$. Then it is
standard to get the factorization 
\ekv{pr.12}
{
f(w)=e^{g(w)}\prod_1^M (w-w_j),\  w\in D(-(\frac{\alpha ^0}{R}|\frac{\alpha ^1}{R}),4r_0/3),
}
together with the bounds
\ekv{pr.13}{|\Re g(w)|\le {\cal O}(\epsilon _0(h)h^{-n}),\ 
M={\cal O}(\epsilon _0(h)h^{-n}).}
See for instance Section 5 in \cite{Sj} where further references are also given.

\par For $0<\epsilon \ll 1$, put 
\ekv{pr.14}
{
\Omega (\epsilon )=\{ r\in [0,r_0[;\, \exists w\in D_{\alpha ^0,\alpha
  ^1}
\hbox{ such that }|w|=r\hbox{ and }|f(w)|<\epsilon \} .
}
If $r\in \Omega (\epsilon )$ and $w$ is a corresponding point in
$D_{\alpha ^0,\alpha ^1}$, we have with $r_j=|w_j|$,
\ekv{pr.14.5}
{\prod_1^M |r-r_j| \le \prod _1^M|w-w_j|\le \epsilon \exp ({\cal
  O}(\epsilon _0(h)h^{-n})).}

Then at least one of the factors $|r-r_j|$ is bounded by 
$ (\epsilon e^{{\cal O}(\epsilon _0(h)h^{-n})})^{1/M}  $. 
In particular, the Lebesgue measure $\lambda (\Omega
(\epsilon ))$ of $\Omega (\epsilon )$ is bounded by 
$2M(\epsilon e^{{\cal O}(\epsilon _0(h)h^{-n})})^{1/M}$. 
Noticing that the last bound increases with $M$ when the last member
of (\ref{pr.14.5}) is $\le 1$,
we get
\begin{prop}\label{pr1} Let $\alpha ^1\in {\bf C}^D$ with 
$|\alpha ^1|=R$ and assume that $\epsilon >0$ is small enough so that the last member of \no{pr.14.5} is
  $\le 1$.
Then 
\eekv{pr.15}
{
\lambda (\{ r\in [0,r_0];\ |\alpha ^0+r\alpha ^1|<R,\ |F(\alpha
^0+r\alpha ^1)|<\epsilon \}) 
\le}
{\frac{\epsilon _0(h)}{h^n}\exp ({\cal O}(1)+\frac{h^n}{{\cal O}(1)
  \epsilon _0(h)}\ln \epsilon ).
}
Here and in the following, the symbol ${\cal O}(1)$ in a denominator
indicates a bounded positive quantity.
\end{prop}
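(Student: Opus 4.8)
The plan is to reduce the statement to an elementary estimate on the size of a union of intervals centered at the moduli of the zeros $w_1,\dots ,w_M$ of $f$, using the factorization \no{pr.12} and the bounds \no{pr.13} that are already in place. First I would observe that the set whose measure is to be bounded is contained in $\Omega (\epsilon )$ as defined in \no{pr.14}: if $r\in [0,r_0]$ satisfies $|\alpha ^0+r\alpha ^1|<R$ and $|F(\alpha ^0+r\alpha ^1)|<\epsilon $, then $w=r$ is a point of $D_{\alpha ^0,\alpha ^1}$ with $|f(w)|=|F(\alpha ^0+r\alpha ^1)|<\epsilon $, so $r\in \Omega (\epsilon )$. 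Hence it suffices to estimate $\lambda (\Omega (\epsilon ))$.

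Next I would exploit \no{pr.14.5}: for every $r\in \Omega (\epsilon )$, writing $r_j=|w_j|$ and abbreviating by $A={\cal O}(\epsilon _0(h)h^{-n})$ the constant in the exponent there (which comes from $|\Re g|\le {\cal O}(\epsilon _0(h)h^{-n})$ in \no{pr.13}), one has $\prod _{j=1}^M|r-r_j|\le \epsilon e^{A}$. Therefore at least one factor obeys $|r-r_j|\le (\epsilon e^{A})^{1/M}$, so $\Omega (\epsilon )$ is covered by the $M$ intervals $\{ r:\ |r-r_j|\le (\epsilon e^{A})^{1/M}\}$, each of length $2(\epsilon e^{A})^{1/M}$, which gives $\lambda (\Omega (\epsilon ))\le 2M(\epsilon e^{A})^{1/M}$.

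It then remains to put $2M(\epsilon e^{A})^{1/M}=2M\exp\big((\ln \epsilon +A)/M\big)$ into the form \no{pr.15}. Under the hypothesis that the last member of \no{pr.14.5} is $\le 1$ we have $\ln \epsilon +A\le 0$, and differentiating shows that $M\mapsto M\exp\big((\ln\epsilon +A)/M\big)$ is then increasing on $M>0$; hence we may replace $M$ by its upper bound $M\le C\epsilon _0(h)h^{-n}$ from \no{pr.13}. Writing $A=C'\epsilon _0(h)h^{-n}$ and substituting, the bound becomes $2C\,\epsilon _0(h)h^{-n}\exp\big(C'/C+(h^n\ln \epsilon )/(C\epsilon _0(h))\big)$, which is exactly of the form \no{pr.15}. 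The only genuinely delicate points are this monotonicity in $M$ — which is what makes it legitimate to pass to the upper bound for $M$, and which is precisely where the smallness assumption on $\epsilon $ is used — and checking that all implied constants are uniform in $h$, $\alpha ^0$, $\alpha ^1$ and $\epsilon $; I do not expect either to cause real trouble, so the proof should be short.
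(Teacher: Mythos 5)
Your proposal is correct and reproduces the paper's own argument: inclusion of the set in $\Omega(\epsilon)$, the covering of $\Omega(\epsilon)$ by $M$ intervals of half-length $(\epsilon e^A)^{1/M}$ coming from the factorization \no{pr.12}, and the monotonicity of $M\mapsto M\exp\bigl((\ln\epsilon+A)/M\bigr)$ for $\ln\epsilon+A\le 0$, which justifies replacing $M$ by its upper bound ${\cal O}(\epsilon_0(h)h^{-n})$ from \no{pr.13}. You merely spell out the monotonicity computation and the $w=r$ observation, which the paper leaves implicit.
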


\par Typically, we can choose $\epsilon =\exp -\frac{\epsilon
  _0(h)}{h^{n+\alpha }}$ for some small $\alpha >0$ and then the upper
bound in \no{pr.15} becomes 
$$
\frac{\epsilon _0(h)}{h^n}\exp ({\cal O}(1)-\frac{1}{{\cal O}(1)h^{\alpha }}).
$$

\par
Now we equip $B_{{\bf C}^D}(0,R)$ with a probability measure of the
form
\ekv{pr.16}
{
P(d\alpha )=C(h)e^{\Phi (\alpha )}L(d\alpha ),
} 
where $L(d\alpha )$ is the Lebesgue measure, $\Phi $ is a $C^1$
function which depends on $h$ and satisfies
\ekv{pr.17}
{
\vert \nabla \Phi \vert ={\cal O}(h^{-N_4}),
}
and $C(h)$ is the appropriate normalization constant.

\par Writing $\alpha =\alpha ^0+Rr\alpha ^1$, $0\le r<r_0(\alpha ^1)$,
$\alpha ^1\in S^{2D-1}$, $\frac{\sqrt{3}}{2}\le r_0\le 1$, we get 
\ekv{pr.18}
{
P(d\alpha )=\widetilde{C}(h)e^{\phi (r)}r^{2D-1}dr S(d\alpha ^1),
}
where $\phi (r)=\phi _{\alpha ^0,\alpha ^1}(r)=\Phi (\alpha
^0+rR\alpha ^1)$ so that $\phi '(r)={\cal O}(h^{-N_5})$,
$N_5=N_4+\widetilde{M}$. 
Here
$S(d\alpha ^1)$ denotes the Lebesgue measure on $S^{2D-1}$.

\par For a fixed $\alpha ^1$, we consider the normalized measure 
\ekv{pr.19}{
\mu (dr)=\widehat{C }(h)e^{\phi (r)}r^{2D-1}dr
}
on $[0,r_0(\alpha ^1)]$
and we want to show an estimate similar to \no{pr.15} for $\mu $
instead of $\lambda $. Write 
$e^{\phi (r)}r^{2D-1}=\exp (\phi (r)+(2D-1)\ln r)$ and consider
the derivative of the exponent,
$$
\phi '(r)+\frac{2D-1}{r}.
$$
This derivative is $\ge 0$ for $r\le
2\widetilde{r}_0$, where
$\widetilde{r}_0=C^{-1}\min (1,Dh^{N_5})$ for some large constant
$C$, and we may assume that
$2\widetilde{r}_0\le r_0$. Introduce the measure
$\widetilde{\mu }\ge \mu $ by
\ekv{pr.20}
{
\widetilde{\mu }(dr)=\widehat{C}(h)e^{\phi (r_{\rm max} )}r_{\rm max}^{2D-1}dr,\
r_{\rm max} :=\max (r,\widetilde{r}_0).
}
Since $\widetilde{\mu }([0,\widetilde{r}_0])\le \mu ([\widetilde{r}_0,2\widetilde{r}_0])$, we get
\ekv{pr.21}
{
\widetilde{\mu }([0,r(\alpha ^1)])\le {\cal
  O}(1).
}
We can write 
\ekv{pr.22}
{
\widetilde{\mu }(dr)=\widehat{C}(h)e^{\psi (r)}dr,
}
where 
\eekv{pr.23}
{&
\psi '(r)={\cal O}(\max (D,h^{-N_5}))={\cal O}(h^{-N_6}),&
}
{&N_6=\max (N_3,N_5).&}
Cf (\ref{pr.3}).

\par  We now decompose $[0,r_0(\alpha ^1)]$ into $\asymp h^{-N_6}$
intervals of length $\asymp h^{N_6}$. If $I$ is such an interval, we see that 
\ekv{pr.24}
{
\frac{\lambda (dr)}{C\lambda (I)}\le \frac{\widetilde{\mu
  }(dr)}{\widetilde{\mu }(I)}\le C\frac{\lambda (dr)}{\lambda
  (I)}\hbox{ on }I.
}

From \no{pr.15}, \no{pr.24} we get when the right hand side of 
\no{pr.14.5} is $\le 1$,
\begin{eqnarray*}
\widetilde{\mu }(\{ r\in I;\, |F(\alpha ^0+rR\alpha ^1)|<\epsilon
\})/\widetilde{\mu }(I)&\le& \frac{{\cal O}(1)}{\lambda (I)}
\frac{\epsilon _0(h)}{h^n}\exp (\frac{h^n}{{\cal O}(1)\epsilon _0(h)}\ln
\epsilon )\\
&=& {\cal O}(1) h^{-N_6}
\frac{\epsilon _0(h)}{h^n}\exp (\frac{h^n}{{\cal O}(1)\epsilon _0(h)}\ln
\epsilon ).
\end{eqnarray*}
Multiplying with $\widetilde{\mu }(I)$ and summing the estimates over $I$ we get 
\ekv{pr.25}
{
\widetilde{\mu }(\{ r\in [0,r(\alpha ^1)];\, |F(\alpha ^0+rR\alpha
^1)|<\epsilon \})\le {\cal O}(1)h^{-N_6}\frac{\epsilon _0(h)}{h^n}
\exp (\frac{h^n}{{\cal O}(1)\epsilon _0(h)}\ln \epsilon )
.
}
Since $\mu \le \widetilde{\mu }$, we get the same estimate with
$\widetilde{\mu }$ replaced by $\mu $. Then from \no{pr.18} we get
\begin{prop}\label{pr2}
Let $\epsilon >0$ be small enough for 
the right hand side of \no{pr.14.5} to be $\le 1$. Then
\ekv{pr.26}
{
P(|F(\alpha )|<\epsilon )\le {\cal O}(1) h^{-N_6}
\frac{\epsilon _0(h)}{h^n}\exp (\frac{h^n}{{\cal O}(1)\epsilon
  _0(h)}\ln \epsilon ).
}
\end{prop}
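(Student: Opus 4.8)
The plan is to combine Proposition \ref{pr1} (the one-dimensional estimate along a ray) with the angular-integration device prepared in \no{pr.18}--\no{pr.25}. First I would fix $\alpha^1\in S^{2D-1}$ and recall from \no{pr.18} that in polar-type coordinates $\alpha=\alpha^0+rR\alpha^1$ the probability measure factors as $P(d\alpha)=\widetilde C(h)e^{\phi(r)}r^{2D-1}\,dr\,S(d\alpha^1)$, with $\phi'={\cal O}(h^{-N_5})$, $N_5=N_4+\widetilde M$. The key point is that Proposition \ref{pr1} gives a bound on the \emph{Lebesgue} measure of the bad set of radii $\{r:\,|F(\alpha^0+rR\alpha^1)|<\epsilon\}$, whereas we need to control the measure $\mu$ (equivalently $\widetilde\mu$) of that set; the transfer between the two is exactly what the dyadic decomposition of $[0,r_0(\alpha^1)]$ into $\asymp h^{-N_6}$ intervals $I$ of length $\asymp h^{N_6}$ accomplishes via the comparison \no{pr.24}, using that $\psi'={\cal O}(h^{-N_6})$ so that $\widetilde\mu$ is comparable to normalized Lebesgue measure on each such short interval.

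Concretely, the steps are: (i) introduce $\widetilde\mu\ge\mu$ as in \no{pr.20}, absorbing the monotone part $e^{\phi}r^{2D-1}$ on $[0,\widetilde r_0]$ where $\widetilde r_0=C^{-1}\min(1,Dh^{N_5})$, so that $\widetilde\mu([0,r_0(\alpha^1)])={\cal O}(1)$ by \no{pr.21} and $\widetilde\mu(dr)=\widehat C(h)e^{\psi(r)}dr$ with $\psi$ as in \no{pr.23}; (ii) on each short interval $I$, apply Proposition \ref{pr1} (legitimate since, by hypothesis, $\epsilon$ is small enough that the last member of \no{pr.14.5} is $\le 1$), divide by $\lambda(I)\asymp h^{N_6}$, and use \no{pr.24} to convert to $\widetilde\mu(I)$, obtaining
$$
\widetilde\mu(\{r\in I:\,|F|<\epsilon\})/\widetilde\mu(I)\le {\cal O}(1)h^{-N_6}\,\frac{\epsilon_0(h)}{h^n}\,\exp\Big(\frac{h^n}{{\cal O}(1)\epsilon_0(h)}\ln\epsilon\Big);
$$
(iii) multiply by $\widetilde\mu(I)$ and sum over the ${\cal O}(h^{-N_6})$ intervals $I$, then bound $\sum_I\widetilde\mu(I)=\widetilde\mu([0,r_0(\alpha^1)])={\cal O}(1)$, giving \no{pr.25}; (iv) since $\mu\le\widetilde\mu$ the same bound holds for $\mu$, and finally integrate over $\alpha^1\in S^{2D-1}$ against the normalized $S(d\alpha^1)$ as in \no{pr.18}, which just multiplies by a constant and yields \no{pr.26}.

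The only mild obstacle is bookkeeping: one must check that the factor $h^{-N_6}$ lost in the transfer from $\lambda$ to $\widetilde\mu$ on an interval of length $h^{N_6}$ is not lost again upon summation — it is not, because summing $\widetilde\mu(I)$ (not $1$) over the $\asymp h^{-N_6}$ intervals gives ${\cal O}(1)$ rather than ${\cal O}(h^{-N_6})$, so exactly one factor $h^{-N_6}$ survives in the final estimate. One also needs $2\widetilde r_0\le r_0$ and $\widetilde r_0$ not too small so that the monotonicity argument behind \no{pr.21} applies; this holds for $h$ small by the definition of $\widetilde r_0$ and the fact that $r_0\ge\sqrt3/2$. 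Everything else is routine, so the proposition follows.
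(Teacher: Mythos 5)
Your proposal reproduces the paper's own argument essentially verbatim: the polar decomposition of $P(d\alpha)$ as in \no{pr.18}, the majorizing measure $\widetilde\mu$ in \no{pr.20}--\no{pr.21}, the comparison \no{pr.24} of $\widetilde\mu$ with Lebesgue measure on intervals of length $\asymp h^{N_6}$, applying Proposition \ref{pr1} on each such interval and summing to get \no{pr.25}, and finally integrating over $\alpha^1\in S^{2D-1}$. The bookkeeping remark about not losing $h^{-N_6}$ twice is exactly the right observation and is what the paper's summation step relies on.
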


\begin{remark}\label{pr3}
{\rm In the case when $\widetilde{R}$ has real coefficients, we may assume
that the eigenfunctions $\epsilon _j$ are real, and from the observation after
Proposition \ref{spe2} we see that we can choose $\alpha _0$ above to
be real. The discussion above can then be restricted to the case of
real $\alpha ^1$ and hence to real $\alpha $. We can then introduce
the probability measure $P$ as in \no{pr.16} on the real ball $B_{{\bf
    R}^D}(0,R)$. The subsequent discussion goes through without any
changes, and we still have the conclusion of Proposition \ref{pr2}.}
\end{remark}

\section{End of the proof of the main result}\label{en}
\setcounter{equation}{0}

We now work under the assumptions of Theorem \ref{int1}. For $z$ in a
fixed neighborhood of $\Gamma $,
we rephrase \no{pr.6} as
\ekv{en.1}
{
|\det P_{\delta ,z}| \le \exp \frac{1}{h^n}(\frac{1}{(2\pi )^n}
\iint \ln |p_z| dxd\xi +\epsilon _0(h)),
}
where $\epsilon _0(h)$ is given in \no{pr.8}. 
Moreover, Proposition \ref{pr2} shows that with probability 
\ekv{en.3}
{
\ge 1-{\cal O}(1)h^{-N_6-n}\epsilon _0(h)e^{-\frac{
h^n}{{\cal O}(1)\epsilon _0(h)}\ln \frac{1}{\epsilon }},
}
we have
\ekv{en.4}{
|\det P_{\delta ,z}|\ge \epsilon \exp (\frac{1}{h^n}(\frac{1}{(2\pi
  )^n})
\iint \ln |p_z| dxd\xi ),
}
provided that $\epsilon >0$ is small enough so that
\ekv{en.5}
{
\hbox{The right hand side of \no{pr.14.5} is }\le 1,\,\forall \alpha
^1\in S^{2D-1}.
}

From (\ref{pr.8}) and the subsequent remark we can take
\ekv{en.7}
{
\epsilon _0(h)=C(h^\kappa +h^n\ln \frac{1}{h})(\ln \frac{1}{\tau_0}
+(\ln \frac{1}{h})^2). 
}

\par Write $\epsilon =e^{-\widetilde{\epsilon }/h^n}$,
$\widetilde{\epsilon }=h^n\ln \frac{1}{\epsilon }$. Then \no{en.5}
holds if 
\ekv{en.9}
{
\widetilde{\epsilon }\ge C\epsilon _0(h),
}
for some large constant $C$. \no{en.3}, \no{en.4} can be
rephrased by saying that with probability
\ekv{en.10}
{
\ge 1-{\cal O}(1)h^{-N_6-n}\epsilon _0(h)e^{-\frac{1}{C}\frac{\widetilde{\epsilon }}{\epsilon _0(h)}},
}
we have
\ekv{en.11}
{
|\det P_{\delta ,z}|\ge \exp \frac{1}{h^n}(\frac{1}{(2\pi )^n}\iint
\ln |p_z|dxd\xi -\widetilde{\epsilon }).
}
This is of interest for $\widetilde{\epsilon }$ in the range
\ekv{en.12}
{
\epsilon _0(h)\ll \widetilde{\epsilon }\ll 1.
}

\par Now, let $\Gamma \Subset \Omega $ be connected with smooth
boundary. Recall that $0<\kappa \le 1$ and that 
\ekv{en.13}
{\hbox{(\ref{grny.3}) holds uniformly for all }z\hbox{ in some neighborhood of
  }\partial \Gamma .}
Then the function
\ekv{en.14}
{
\phi (z) =\frac{1}{(2\pi )^n}\iint \ln |p_z|dxd\xi 
}
is continuous and subharmonic in a neighborhood of
$\partial \Gamma $. Moreover it satisfies the assumption (11.37) of
\cite{HaSj} uniformly for $z$ in some neighborhood of $\partial \Gamma $
with $\rho _0$ there equal to $2\kappa $. We shall apply
Proposition 11.5 in \cite{HaSj} to the holomorphic function 
$$
u(z)=\det P_{\delta ,z},
$$
with ``$\epsilon $'' there replaced by $C\widetilde{\epsilon }$ for
$C>0$ sufficiently large and ``$h$'' there replaced by $h^n$. Choose
$0<r\ll 1$ and $z_1,...,z_N\in \partial \Gamma +D(0,\frac{r}{2})$ as
in that proposition, so that 
$$
\partial \Gamma +D(0,r)\subset \cup_1^N D(z_j,2r),\ N\asymp \frac{1}{r}.
$$
Then, according to \no{en.10}, \no{en.11} we know that with
probability
\ekv{en.15}
{
\ge 1-\frac{{\cal O}(1)\epsilon _0(h)}{rh^{N_6+n}}e^{-
\frac{\widetilde{\epsilon }}{{\cal O}(1)\epsilon _0(h)}}
}
we have 
\ekv{en.16}
{
h^n\ln |u(z_j)|\ge \phi (z_j)-\widetilde{\epsilon },\ j=1,...,N.
}

\par In a full neighborhood of $\partial \Gamma $ we also have
\ekv{en.17}
{
h^n\ln |u(z)|\le \phi (z)+C\widetilde{\epsilon }.
}
By Proposition 11.5 in \cite{HaSj} we conclude that with probability
bounded from below as in \no{en.15}
 we have for every $\widehat{M}>0$:
\eekv{en.18}
{
&&|
\#(u^{-1}(0)\cap \Gamma )-\frac{1}{h^n2\pi }\int_\Gamma \Delta \phi L(dz)
|\le
}
{&&
\frac{{\cal O}(1)}{h^n}\left( \frac{\widetilde{\epsilon }}{r}
+{\cal O}_{\widehat{M}}(1)(r^{\widehat{M}}+
\ln (\frac{1}{r}))\mu (\partial \Gamma +D(0,r))
 \right),
}
where $\mu $ denotes the measure $\Delta \phi L(dz)$. Choose $\widehat{M}=1$.

According to Section 10 in \cite{HaSj}, we know that near $\Gamma $,
the measure $\frac{1}{2\pi
}\Delta \phi L(dz)$ is the push forward under $p$ of $(2\pi )^{-n}$
times the symplectic volume element, and we
can replace $\frac{1}{2\pi
}\Delta \phi L(dz)$ by this push forward in \no{en.18}. Moreover $u^{-1}(0)$ is the set
of eigenvalues of $P_\delta $ so we can rephrase \no{en.18} (with $\widehat{M}=1$) as 
\eekv{en.19}
{
&&|
\#(\sigma (P_\delta )\cap \Gamma )-\frac{1}{(2\pi h)^n
}\mathrm{vol\,}(p^{-1}(\Gamma ))
|\le
}
{&&
\frac{{\cal O}(1)}{h^n}\left( \frac{\widetilde{\epsilon }}{r}
+{\cal O}(1)(r+\ln (\frac{1}{r}))\mathrm{vol\,}(p^{-1}(\partial
\Gamma +D(0,r)))
 \right).
}
This concludes the proof of Theorem \ref{int1}, with $P$ replaced by
the slightly more general operator $P_0$.

\section{Appendix: Review of some $h$-pseudo\-differ\-ential 
calculus}\label{app}
\setcounter{equation}{0}

We recall some basic $h$-pseudodifferential calculus on compact
manifolds, including some fractional powers in the spirit of R.~Seeley
\cite{Se}. Recall from \cite{Ho}
that if $X\subset {\bf R}^n$ is open, $0<\rho \le 1$, $m\in {\bf R}$,
then $S^m_\rho (X\times {\bf R}^n)=S^m_{\rho ,1-\rho } (X\times {\bf
  R}^n)$ is defined to be the space of all $a\in C^\infty (X\times
{\bf R}^n)$ such that $\forall K\Subset X,$ $\alpha ,\beta \in {\bf
  N}^n,$ there exists a constant
$C=C(K,\alpha ,\beta )$, such that
\ekv{app.1}
{
|\partial _x^\alpha \partial _\xi ^\beta a(x,\xi )|\le C
\langle \xi \rangle^{m-\rho |\beta |+(1-\rho )|\alpha |},\ (x,\xi )\in
K\times {\bf R}^n.
}
When $a(x,\xi )=a(x,\xi ;h)$ depends on the additional parameter $h\in
]0,h_0]$ for some $h_0>0$, we say that $a\in S^m_\rho (X\times {\bf
  R}^n)$, if \no{app.1} holds uniformly with respect to $h$. For
$h$-dependent symbols, we introduce $S^{m,k}_\rho =h^{-k}S^m_\rho
$. When $\rho =1$ it is customary to suppress the subscript $\rho $.

\par Let now $X$ be a compact $n$-dimensional manifold. We say that 
$R=R_h:{\cal D}'(X)\to C^\infty (X)$ is negligible, and write
$R\equiv 0$, if the distribution-kernel $K_R$ satisfies $\partial
_x^\alpha \partial _y^\beta K_R(x,y)={\cal O}(h^\infty )$ for all
$\alpha ,\beta \in {\bf N}^n$ (when expressed in local coordinates).

We say that an operator $P=P_h:C^\infty (X)\to {\cal D}'(X)$ belongs
to the space $L^{m,k}(X)$ if $\phi P_h\psi $ is negligible for all
$\phi ,\psi \in C^\infty (X)$ with disjoint supports and if for every
choice of local coordinates $x_1,...,x_n$, defined on the open subset
$\widetilde{X}\subset X$ (that we view as a subset of ${\bf R}^n$), we
have on $\widetilde{X}$ for every $u\in C_0^\infty (\widetilde{X})$:
\ekv{app.2}
{
Pu(x)=\frac{1}{(2\pi h)^n}\iint e^{\frac{i}{h}(x-y)\cdot \theta}
a(x,\theta ;h)u(y) dyd\theta +Ku(x),
}
where $a\in S^{m,k}(\widetilde{X}\times {\bf R}^n)$ and $K$ is negligible. 

The correspondence $P \mapsto a$ is not globally well-defined, but the 
various local maps give rise to a bijection
\ekv{app.3}{
L^{m,k}(X)/L^{m-1,k-1}(X) \to S^{m,k}(T^*X)/S^{m-1,k-1}(T^*X),
}
where we notice that $S^{m,k}(T^*X)$ is well-defined in the natural
way. The image $\sigma _P(x,\xi )$ of $P\in L^{m,k}(X)$ is called the
principal symbol. 

Pseudodifferential operators in the above classes map $C^\infty $ to
$C^\infty $ and extend to well-defined operators ${\cal D}'(X)\to
{\cal D}'(X)$. They can therefore be composed with each other: If
$P_j\in L^{m_j,k_j}(X)$, for $j=1,2$, then $P_1\circ P_2\in
L^{m_1+m_2,k_1+k_2}$. Moreover $\sigma _{P_1\circ P_2}(x,\xi )=\sigma
_{P_2}(x,\xi )\sigma _{P_1}(x,\xi )$.

\par We can invert elliptic operators: If $P_h\in L^{m,k}$ is elliptic
in the sense that $|\sigma _{P}(x,\xi )|\ge \frac{1}{C}h^{-k}\langle
\xi \rangle^m$, then $P_h$ is invertible (either as a map on $C^\infty
$ or on ${\cal D}'$) for $h>0$ small enough, and the inverse $Q$
belongs to $L^{-m,-k}$. (If we assume invertibility in the full range
$0<h\le h_0$ then the conclusion holds in that range.) Notice that 
$\sigma _Q(x,\xi )=1/\sigma _P(x,\xi )\in S^{-m,-k}/S^{-m-1,-k-1}$. 

\par The proof of these facts is a routine application of the method of
stationary phase, following for instance the presentation in \cite{GrSj}.

\par Let $\widetilde{R}$ be a positive elliptic 2nd order 
differential operator with smooth coefficients on $X$, self-adjoint
with respect to some smooth positive density on $X$. Let $r(x,\xi )$
be the principal symbol of $\widetilde{R}$ in the classical sense, so
that $r(x,\xi )$ is a homogeneous polynomial in $\xi $ with $r(x,\xi
)\asymp |\xi |^2$. Then $P:=h^2\widetilde{R}$ belongs to $L^{2,0}(X)$
and $\sigma _{h^2\widetilde{R}}=r$. 
\begin{prop}\label{app1} For every $s\in {\bf R}$, we have
$(1+h^2\widetilde{R})^{2s}\in L^{2s,0}$ and the principal symbol is
given by $(1+r(x,\xi ))^s$.
\end{prop}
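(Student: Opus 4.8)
Write $A:=1+h^2\widetilde R$, so that by hypothesis $A\in L^{2,0}(X)$ is elliptic, self-adjoint and $\ge 1$, with principal symbol $1+r(x,\xi)\in S^{2,0}$ and $r\asymp|\xi|^2$. The plan is to realise the complex powers $A^w$ as $h$-pseudodifferential operators by integrating the resolvent, in the spirit of Seeley \cite{Se}, first for $\Re w<0$ and then for all $w$. For $\Re w<0$, choose a contour $\Gamma\subset{\bf C}\setminus(-\infty,0]$ encircling $\sigma(A)\subset[1,\infty)$ once in the positive sense (for instance the line $\Re\lambda=1/2$ with a suitable orientation, regarded as a limit of compact contours) and the principal branch of $\lambda\mapsto\lambda^w$. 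By the functional calculus,
$$A^w=\frac{1}{2\pi i}\int_\Gamma\lambda^w(\lambda-A)^{-1}\,d\lambda,$$
the integral converging absolutely in operator norm because $\|(\lambda-A)^{-1}\|\le\mathrm{dist}(\lambda,[1,\infty))^{-1}$ decays like $|\lambda|^{-1}$ while $|\lambda^w|={\cal O}(|\lambda|^{\Re w})$ on $\Gamma$.

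The heart of the argument is a parametrix for $(\lambda-A)^{-1}$ with $\lambda$ as a large parameter. In a coordinate chart the full symbol $a(x,\xi;h)=1+r(x,\xi)+ha_1+\dots$ of $A$ satisfies the ellipticity-with-parameter bound $|\lambda-a(x,\xi;h)|\ge(\langle\xi\rangle^2+|\lambda|)/C$ uniformly for $\lambda\in\Gamma$. I would run the usual symbolic recursion to produce $b(x,\xi,\lambda;h)\sim\sum_{j\ge0}b_j$ with $b_0=(\lambda-a)^{-1}$, each step of the composition formula contributing an extra factor $h$ and an extra decay $(\langle\xi\rangle^2+|\lambda|)^{-1}\langle\xi\rangle^{-1}$, so that $(\lambda-A)\circ\mathrm{Op}(b)=I+R(\lambda)$ with $R(\lambda)$ negligible and rapidly decaying in $\lambda$; absorbing $R(\lambda)$ by $\|(\lambda-A)^{-1}\|={\cal O}(|\lambda|^{-1})$ gives $(\lambda-A)^{-1}=\mathrm{Op}(b(\cdot,\cdot,\lambda))+\widetilde R(\lambda)$ with $\widetilde R(\lambda)$ negligible and ${\cal O}(|\lambda|^{-N})$ for every $N$. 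Substituting into the contour integral, the $\widetilde R$-contribution is negligible and $A^w=\mathrm{Op}(c)$ with $c(x,\xi;h)\sim\sum_j\frac1{2\pi i}\int_\Gamma\lambda^wb_j\,d\lambda$. The leading term is, by Cauchy's formula (the pole $\lambda=1+r(x,\xi)\ge1$ lies inside $\Gamma$ and $\lambda^w$ is holomorphic near it),
$$\frac{1}{2\pi i}\int_\Gamma\frac{\lambda^w}{\lambda-1-r(x,\xi)}\,d\lambda=(1+r(x,\xi))^w\in S^{2\Re w,0},$$
while the $\lambda$-decay of $b_j$ makes each remaining integral converge and lie in $S^{2\Re w-j,-j}\subset S^{2\Re w-1,-1}$. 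Hence $A^w\in L^{2\Re w,0}(X)$ with principal symbol $(1+r)^w$ in the sense of \no{app.3}; patching over charts (off-diagonal terms being negligible) and using that $(1+r)^w$ is an invariantly defined function on $T^*X$ gives the same statement on $X$.

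For general $w$ I would remove the restriction $\Re w<0$ by writing $A^w=A^k\circ A^{w-k}$ with $k\in{\bf N}$ chosen so that $\Re(w-k)<0$. Iterating the composition rule $L^{m_1,k_1}\circ L^{m_2,k_2}\subseteq L^{m_1+m_2,k_1+k_2}$ (with multiplication of principal symbols) starting from $A\in L^{2,0}$ gives $A^k\in L^{2k,0}$ with principal symbol $(1+r)^k$; combining this with the case $\Re(w-k)<0$ already treated yields $A^w\in L^{2\Re w,0}$ with principal symbol $(1+r)^{k}(1+r)^{w-k}=(1+r)^w$. Specialising to real exponents gives Proposition \ref{app1}.

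The step I expect to be the main obstacle is the uniform-in-$\lambda$ parametrix construction: the symbol classes must be set up so that the gain of one power of $\langle\xi\rangle$ and one factor of $h$ at each recursion step is matched by a gain of one power of $|\lambda|$ in the decay, which is precisely what makes all the contour integrals above absolutely convergent, differentiable under the integral sign in $(x,\xi)$, and valued in the asserted $S^{\bullet,\bullet}$ classes. Once this bookkeeping is in place, the functional-calculus identity, the Cauchy-formula computation of the leading symbol, and the composition argument for $\Re w\ge0$ are all short. An alternative would be an almost-analytic (Helffer--Sj\"ostrand) functional calculus, but since $(1+t)^w$ neither decays nor is compactly supported one would still need the resolvent parametrix to handle the high-frequency regime, so this route is not genuinely simpler.
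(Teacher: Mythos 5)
Your proposal is correct and structurally parallel to the paper's argument: both represent the fractional power by a resolvent contour integral, both must show that the resolvent is a pseudodifferential operator with uniform control in the spectral parameter, and both extract the principal symbol $(1+r)^s$ by a Cauchy/residue computation, reducing the general exponent to $\Re w<0$ (resp.\ $s$ large negative) by composing with integer powers. The genuine difference is the way the resolvent parametrix is obtained. You propose a Seeley-type parameter-dependent parametrix built from scratch, using the ellipticity-with-parameter bound $|\lambda-a|\gtrsim\langle\xi\rangle^2+|\lambda|$ and a bookkeeping scheme for the gains at each recursion step. The paper instead uses a rescaling trick: for $z$ on the contour it writes $z-h^2\widetilde R=|z|\bigl(\tfrac{z}{|z|}-\widetilde h^2\widetilde R\bigr)$ with $\widetilde h=h/|z|^{1/2}$, notices that $\tfrac{z}{|z|}-\widetilde h^2\widetilde R$ is an \emph{ordinary} elliptic element of $L^{2,0}$ in the new parameter $\widetilde h$, so that the already-established calculus hands over the inverse in $L^{-2,0}$ with symbol $(\tfrac{z}{|z|}-r(x,\xi))^{-1}$ plus an $S^{-3,-1}$ remainder, and then undoes the rescaling. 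This sidesteps setting up parameter-dependent symbol classes entirely. The price the paper pays (and a point you gloss over) is that the crude absolute-value bound on the remainder integral over the original contour does \emph{not} give the needed $\langle\xi\rangle^{2s-1}$ decay when $s$ is very negative and $|\xi|\gg1$; the paper therefore shifts the contour to the boundary of $\arg(z+\tfrac12|\xi|^2)<\pi/4$. In your set-up you would have to do the same, or else compute $\tfrac1{2\pi i}\int_\Gamma\lambda^w b_j\,d\lambda$ exactly by residues (each $b_j$ being a rational function of $\lambda$ with the single pole $\lambda=a$), which gives the sharp $\xi$-decay directly. So your plan is a legitimate alternative realisation of the same strategy; what the paper's rescaling buys is re-use of the $\widetilde h$-calculus in place of the classical Seeley bookkeeping, at the modest cost of one contour deformation.
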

\begin{proof} It suffices to show this for $s$ sufficiently large
negative. In that case we have \ekv{app.4} {
(1+h^2\widetilde{R})^s=\frac{1}{2\pi i}\int_\gamma
(1+z)^s(z-h^2\widetilde{R})^{-1}dz, } where $\gamma $ is the oriented
boundary of the sector $\mathrm{arg\,}(z+\frac{1}{2})<\pi /4$. For
$z\in \gamma $, we write
$$
(z-h^2\widetilde{R})=|z|(\frac{z}{|z|}-\widetilde{h}^2\widetilde{R}),\
\widetilde{h}=\frac{h}{|z|^{1/2}},
$$
and notice that $\frac{z}{|z|}-\widetilde{h}^2\widetilde{R}\in
L^{2,0}$ is elliptic when we regard $\widetilde{h}$ as the new
semi-classical parameter. By self-adjointness and positivity we know
that this operator is invertible, so
$(\frac{z}{|z|}-\widetilde{h}^2\widetilde{R})^{-1}\in L^{-2,0}$, and
for every system of local coordinates the symbol (in the sense of
$\widetilde{h}$-pseudodifferential operators) is \ekv{app.4.5}
{\frac{1}{\frac{z}{|z|}-r(x,\xi )}+a,\quad a\in S^{-3,-1}.}  The
symbol of $(z-h^2\widetilde{R})^{-1}$ as an $h$-pseudodifferential operator
is therefore \ekv{app.5} { \frac{1}{|z|(\frac{z}{|z|}-r(x,\frac{\xi
}{|z|^{1/2}}))}+\frac{1}{|z|}a(x,\frac{\xi }{|z|^{1/2}}).  } Here the
first term simplifies to $(z-r(x,\xi ))^{-1}$ and the corresponding
contribution to \no{app.4} has the symbol $(1+r(x,\xi ))^s$.

\par The contribution from the remainder in \no{app.5} to the symbol
in \no{app.4} is
$$b(x,\xi ):=\frac{1}{2\pi i}\int_\gamma
\frac{(1+z)^s}{|z|}a(x,\frac{\xi }{|z|^{1/2}})dz,$$ where we will use
the estimate
\ekv{app.5.5}{
\partial _x^\alpha \partial _\xi^\beta \frac{1}{|z|}
a(x,\frac{\xi}{|z|^{1/2}})={\cal O}(\frac{h}{|z|^{(3+|\beta |)/2}}
\langle \frac{\xi
}{|z|^{1/2}}\rangle^{-3-|\beta |})={\cal O}(h)(|z|+|\xi
|^2)^{-\frac{1}{2}(3+|\beta |)}.
}
Thus, \ekv{app.6} {
\partial _x^\alpha \partial _\xi ^\beta b={\cal O}(1)h\int _\gamma
|z|^s(|z|+|\xi
|^2)^{-\frac{1}{2}(3+|\beta |)}
|dz|.  }

\par In a region $|\xi |={\cal O}(1)$, we get
$$
\partial _x^\alpha \partial _\xi ^\beta b ={\cal O}(1).
$$

\par In the region $|\xi |\gg 1$ 
shift the contour $\gamma $ in (\ref{app.4}) to the oriented boundary
of the sector $\mathrm{arg\,}(z+\frac{1}{2}|\xi |^2)<\frac{\pi
}{4}$. Then we get
\no{app.6} for the shifted contour and the integral can now be
estimated by
$$
{\cal O}(h)\int_{|\xi |^2/C}^\infty t^{s-\frac{3}{2}-\frac{|\beta
    |}{2}} dt = {\cal O}(h|\xi |^{2s-1-|\beta |}).
$$
The proposition follows.
\end{proof}

\end{document}